\newtheorem{theorem}{Theorem}[section]
\newtheorem*{theorem*}{Theorem}
\newtheorem{corollary}[theorem]{Corollary}
\newtheorem*{corollary*}{Corollary}
\newtheorem{proposition}[theorem]{Proposition}
\newtheorem*{proposition*}{Proposition}
\newtheorem{lemma}[theorem]{Lemma}
\theoremstyle{definition}
\newtheorem{definition}[theorem]{Definition}
\theoremstyle{definition}
\newtheorem{example}[theorem]{Example}
\newtheorem{remark}[theorem]{Remark}
\numberwithin{equation}{section}
\newcommand{\C}{{\mathbb C}}
\newcommand{\g}{{\mathfrak{g}}}
\newcommand{\p}{{\mathfrak{p}}}
\newcommand{\GL}{{\mathrm{GL}}}
\newcommand{\SL}{{\mathrm{SL}}}
\newcommand{\Hom}{{\mathrm{Hom}}}
\newcommand{\End}{{\mathrm{End}}}
\newcommand{\Sing}{{\mathrm{Sing}}}
\newcommand{\Ocnp}{{\mathscr{O}_{\C^n,p}}}
\newcommand{\sing}{{\mathrm{Sing}}}
\newcommand{\smooth}{{\mathrm{Smooth}}}
\newcommand{\ann}{{\mathrm{ann}}}
\newcommand{\Derlog}[1]{{\mathrm{Der}(-\log #1)}}
\newcommand{\Der}{\mathrm{Der}}
\newcommand{\rank}{{\mathrm{rank}}}
\newcommand{\tr}{{\mathrm{tr}}}
\newcommand{\adj}{{\mathrm{adj}}}
\newcommand{\im}{{\mathrm{im}}}
\newcommand{\Gm}{{\mathbb{G}_\mathrm{m}}}
\newcommand{\bracket}[1]{\left<#1\right>}
\title[Fitting ideals of logarithmic vector fields]{On Fitting ideals of logarithmic vector fields and Saito's criterion}
\date{\today}
\author{Brian Pike}
\address{Dept.\ of Computer and Math\-ematical Sciences,
University  of Tor\-onto at Scarborough, 
1265 Military Trail, 
Toronto, ON M1C 1A4,
Canada}
\email{bapike@gmail.com}
\subjclass[2010]{17B66 (Primary); 32B15, 32S05, 32S25 (Secondary)}
\keywords{
logarithmic vector field,
logarithmic derivation,
Fitting ideal,
analytic set,
hypersurface singularity,
logarithmic form,
free divisor,
linear free divisor%
}
\begin{document}
\begin{abstract}
The germ of an analytic set $(X,p)$ in $\C^n$
has an associated 
$\mathscr{O}_{\C^n,p}$--module $\Derlog{X}$ of
\emph{logarithmic vector fields}, the ambient germs of holomorphic
vector fields tangent to the smooth locus
of $X$.
For a module $L\subseteq \Derlog{X}$ let
$I_k(L)$ be the ideal generated by
the $k\times k$ minors of a matrix of generators for $L$;
these are the Fitting ideals of $\Der_{\C^n,p}/L$.
We aim to:
(i)~find sufficient conditions on $\{I_k(L)\}$ to prove $L=\Derlog{X}$;
(ii)~identify $\{I_k(\Derlog{X})\}$, to provide a necessary condition for
equality;
and
(iii)~provide a geometric interpretation of these ideals.

Even for $(X,p)$ smooth, an example shows that Fitting ideals alone are insufficient
to prove equality, although 
we give a different criterion.
Using (ii) and (iii) in the smooth case, we give partial
answers to (ii) and (iii) for arbitrary $(X,p)$.
When $(X,p)$ is a hypersurface, we give
sufficient
algebraic or geometric
conditions for the
reflexive hull of $L$ to equal $\Derlog{X}$;
for $L$ reflexive, this answers (i) and generalizes
criteria of Saito for free divisors
and Brion for linear free divisors.
\end{abstract}
\maketitle
\tableofcontents

% TODO for future work: do log vfs preserve primary decomposition, or
% rather, all associated primes?
\section*{Introduction}
In \cite{Sa}, Kyoji Saito introduced the notion of a \emph{free divisor}, a
complex hypersurface germ $(X,p)\subset (\C^n,p)$ for which the
associated
$\mathscr{O}_{\C^n,p}$--module $\Derlog{X}_p$ of \emph{logarithmic vector fields} is a free
module, necessarily of rank $n$;
geometrically, these are the vector fields tangent to $(X,p)$.
%He proved that the discriminant of
%the versal unfolding of an isolated hypersurface singularity is always
%a free divisor,
%a result 
%subsequently extended to isolated complete intersection
%singularities \cite{looijenga}.
Although many classes of free divisors have been found,
free divisors remain somewhat mysterious; for instance, it is
not completely understood which hyperplane arrangements are free divisors.

To determine when a set of $n$ elements of
$\Derlog{X}_p$ forms a generating set,
Saito proved a criterion (see Corollary
\ref{cor:saitoscriterion}) 
in terms of the determinant of a presentation matrix
of these elements being
reduced in $\Ocnp$.
Saito's criterion can only be satisfied when $(X,p)$ is a free divisor.
Several questions arise:
\begin{enumerate}
\item \label{en:beyondfree} 
For arbitrary $(X,p)$, is there a necessary and sufficient
condition on a submodule $L\subseteq \Derlog{X}_p$ to have equality?
\item \label{en:saitogeo} What does Saito's criterion mean geometrically?
\end{enumerate}

If $X$ is not empty, $\C^n$, or a free divisor, then
$\Derlog{X}_p$ requires $>n$ generators.
For \eqref{en:beyondfree}, 
it is natural to 
mimic Saito's criterion by considering
some condition on 
the ideals $I_k(L)$, 
%$k=1,\ldots,n$, 
$1\leq k\leq n$, 
where $I_k(L)$ is
generated by the $k\times k$ minors of a
matrix of a generating set of $L$;
these are the Fitting ideals of $\Der_{\C^n,p}/L$,
but we shall abuse terminology and call them the \emph{Fitting ideals of
$L$}.
An understanding of the geometric content of these ideals
may also answer \eqref{en:saitogeo}.

%Then \eqref{en:saitogeo} may be addressed by the more general problem
%of understanding the geometric content of such ideals.
%
%Understanding the geometric content of these ideals would then
%address \eqref{en:saitogeo}.

To this end, in this article
we study these ideals of logarithmic vector fields.
The Lie algebra structure of logarithmic vector fields has been
studied,
in particular in \cite{hausermuller},
but these ideals have not.
The only results we are aware of
(\cite[Proposition 2.2]{hausermuller})
address the radicals
of $I_k(\Derlog{X}_p)$ for $k\geq\dim(X)$.
%  OK, k=\dim(X) is only done for X irreducible.
%and $I_{\dim(X)}(\Derlog{X}_p)$ for $X$ irreducible. 

%The organization of the paper is as follows.
In \S\ref{sec:notation}, we define these Fitting ideals,
and demonstrate in 
Example \ref{ex:counterexample}
that for non-hypersurfaces,
there is no general condition on the Fitting ideals of $L$
sufficient to prove $L=\Derlog{X}_p$, even when $(X,p)$ is smooth.
Nevertheless, an understanding of 
$\{I_k(\Derlog{X}_p)\}$ 
can provide necessary conditions
for $L$ to equal $\Derlog{X}_p$.
The radicals of these ideals also contain 
information on
various stratifications of $(X,p)$.

In \S\ref{sec:logsmooth}
we study smooth germs.
Theorem \ref{thm:smoothgencriteria} answers \eqref{en:beyondfree}
completely for smooth $(X,p)$, although our criterion is not in terms
of Fitting ideals.
In Propositions \ref{prop:smoothideals1} and \ref{prop:smoothideals2}, we
describe  the
corresponding Fitting ideals and their geometric content.

In \S\ref{sec:logarb} we study arbitrary analytic germs.
Using our understanding of the smooth points of $X$ and a generalization of
the Nagata--Zariski Theorem due to \cite{eisenbud-hochster}
(see Theorem \ref{thm:eh}),
we give upper bounds for $I_k(\Derlog{X}_p)$ in Theorem
\ref{thm:onminors} and Remark \ref{rem:improveminors}.
(For instance, it follows that
if $J$ is a prime ideal defining an irreducible component of
$(X,p)$, then a certain
Fitting ideal is contained in $J$, but not $J^2$; see
Remark \ref{rem:reduced}.)
We describe the geometric meaning of certain Fitting ideals
in Proposition 
\ref{prop:componentreduced}.

In \S\ref{sec:duals}, we follow Saito and
recall a useful duality between modules of vector fields
and modules of meromorphic $1$-forms.
With this, under mild conditions
the double dual of a module $L$ of vector fields can be identified
with a larger module of vector fields that we call the
\emph{reflexive hull} of $L$.
For a hypersurface $(X,p)$, $\Derlog{X}_p$ is reflexive and hence
equals its reflexive hull.
%This notion is useful mainly for hypersurfaces. 

In \S\ref{sec:hypersurfaces},
we apply our earlier work to a hypersurface $(X,p)$.
Proposition \ref{prop:hypersurface} addresses the geometric content of
$I_n(\Derlog{X}_p)$, and hence answers \eqref{en:saitogeo}.
Theorem \ref{thm:generalizesaito}
gives sufficient (algebraic or geometric) conditions for
the reflexive hull of a module $L$ of logarithmic
vector fields 
to equal $\Derlog{X}_p$.
When $L$ is already reflexive,
as is the case for $L$ free,
this gives a condition for
$L=\Derlog{X}_p$.
In particular, this recovers Saito's criterion for free divisors
(Corollary \ref{cor:saitoscriterion}),
and a criterion of Michel Brion (Corollary \ref{cor:brions})
for linear free divisors
associated to representations of linear algebraic groups.
It was an attempt to
understand and generalize Brion's result that 
originally motivated this work.

We are grateful 
to Ragnar-Olaf Buchweitz
and Eleonore Faber
for many helpful conversations.

\section{Fitting ideals}
\label{sec:notation}
We begin with some notation that we maintain for the whole
paper.

\subsection{Notation}
For an open subset $U\subseteq \C^n$,
let $\mathscr{O}_U$ denote the sheaf of holomorphic functions on
$U$ and
let $\Der_{U}$ denote the sheaf of holomorphic vector fields on
$U$.
For a sheaf $\mathscr{S}$ (respectively, $\mathscr{S}_{U}$), let
$\mathscr{S}_p$ (resp., $\mathscr{S}_{U,p}$) denote the stalk at
$p\in U$.
For an ideal $I$, let $V(I)$ denote the common zero set of $f\in I$.
For a set or germ $S$ and point $p\in U$, let $I(S)$ be the ideal of functions vanishing on $S$,
and let $\mathscr{M}_p$ be the maximal ideal of functions vanishing at
$p$;
the ambient ring is given by context. %in $\mathscr{O}_{U}(U)$ or $\mathscr{O}_{U,p}$.
For a germ $g$,
a specific choice of representative will always be denoted by
$g'$ or $g''$.

All of our analytic sets and analytic germs will be reduced.
Except for \S\ref{subsec:lfds}, by abuse of terminology a \emph{Zariski closed}
(respectively, \emph{Zariski open})
subset shall mean an
analytic subset (resp., the complement of an analytic subset).

For an analytic set $X\subseteq U$
the
coherent sheaf $\Derlog{X}$ of
\emph{logarithmic vector fields of $X$}
is defined by
\begin{equation}
\label{eqn:derlog}
\Derlog{X}(V)=\{\eta\in \Der_U(V): \eta(I(X))\subseteq I(X)\},
\end{equation}
where $V$ is an arbitrary open subset of $U$ and
$I(X)\subseteq \mathscr{O}_U(V)$ is the ideal of functions on $V$ vanishing
on $X$.
Each $\Derlog{X}(V)$ is an $\mathscr{O}_U(V)$--module closed under the
Lie bracket of vector fields,
consisting of those vector fields on $V$ tangent to
(the smooth points of) $X$.
For an analytic germ $(X,p)$, 
$\Derlog{X}_p$ is defined as $\Derlog{X'}_p$ for any representative
$X'$ of $(X,p)$; it is an $\mathscr{O}_{\C^n,p}$--module closed under
the Lie bracket.
See \cite{Sa,hausermuller} for 
an introduction to logarithmic vector fields.

%For a complex manifold $M$ containing $p$, let $T_p M$ denote the
%tangent space to $M$ at $p$.
If $f:M\to N$ is a holomorphic map between complex manifolds, let
$df_{(p)}:T_p M\to T_{f(p)} N$ denote the derivative at $p\in M$. 
For a complex vector space $L$ of vector fields defined at $p$,
define
a subspace of $T_p\C^n$ by
\begin{equation}
\bracket{L}_p=\{\eta(p): \eta\in L\}.
\end{equation}

\subsection{Fitting ideals}

Let $U$ be an open subset of $\C^n$ and let $L\subseteq \Der_U(U)$ be a
$\mathscr{O}_U(U)$--module
of holomorphic vector fields on $U$.
Let $(z_1,\dots,z_n)$ be holomorphic coordinates on $U$.
Choose a set of generators
$\eta_1,\ldots,\eta_m$ of $L$.
Writing
$\eta_j=\sum_{i=1}^n a_{ij}\frac{\partial}{\partial z_i}$,
form a $n\times m$ matrix $A=(a_{ij})$ with entries in
$\mathscr{O}_U(U)$.
We call $A$ a \emph{Saito matrix} for $L$.
For any $1\leq k\leq n$,
consider the $\mathscr{O}_U(U)$ ideal
$I_k(L)$
generated by $k\times k$ minors of a Saito matrix of $L$.

To see that $I_k(L)$ is well-defined, we 
give an equivalent definition.
Identifying $\Der_U(U)$ with $\mathscr{O}_U(U)\{\frac{\partial}{\partial
z_i}\}_{i=1..n}$ gives an exact sequence
\begin{equation*}
\xymatrix{
\mathscr{O}_U^m
\ar[r]^-A &
\Der_U(U) \ar[r] &
 \Der_U(U)/L \ar[r] & 
0.}
\end{equation*}
Thus, $I_k(L)$ is also the $(n-k)$th Fitting ideal
of $\Der_U(U)/L$, and hence well-defined.
By abuse of terminology, we shall refer to
$\{I_k(L)\}$ as the \emph{Fitting ideals of $L$}.

\begin{comment}
\begin{remark}
With this notation,
the standard inclusions for Fitting ideals
% p.498 of Eisenbud
(e.g., \cite[Prop.~20.7]{eisenbud})
are that
for $1\leq k<n$,
$J\cdot  I_k(L)\subseteq I_{k+1}(L)$,
where $J=\ann(\Der_U(U)/L)\supseteq I_n(L)$ is the
annihilator ideal 
consisting of $f\in \mathscr{O}_U(U)$ such that
$f\cdot \Der_U(U)\subseteq L$.
For our purposes, this containment is not particularly useful.
%This containment is not very useful in determining
%whether
%$L$ consists of all logarithmic vector fields.
%
%
%In particular, the left side of the containment is
%not very interesting, as it will mainly consist of functions 
\end{remark}
\end{comment}

For a submodule $L\subseteq \Der_{\C^n,p}$, 
the ideals
$I_k(L)\subseteq \mathscr{O}_{\C^n,p}$ may be defined
similarly.
Of particular interest are
the ideals $I_k(\Derlog{X}_p)$, $k=1,\ldots,n$,
when $(X,p)$ is an analytic germ in $\C^n$.
For instance, 
%$\left\{\sqrt{I_k(\Derlog{X}_p)}\left\}_{k=1,\ldots,n}$
the radicals of
these ideals
encode the
\emph{logarithmic stratification} of $(X,p)$,
a (not necessarily finite)
decomposition of $(X,p)$ into smooth strata
%(the maximal integral
%submanifolds of the distribution spanned by $\Derlog{X}_p$)
along which the germ is biholomorphically trivial
(see \cite[\S3]{Sa}).
Note that when this stratification is finite, it is equal to the
canonical Whitney stratification of $(X,p)$
(\cite[Prop.~4.5]{DP-matrixsingII}).
Since this stratification is a rather subtle property of $(X,p)$, we do not expect to
be able to describe $I_k(\Derlog{X}_p)$ explicitly.
However, if $L\subseteq \Derlog{X}_p$, then
any property known about $I_k(\Derlog{X}_p)$
gives a necessary condition to have $L=\Derlog{X}_p$.

\subsection{Inadequacy of Fitting ideals}
An easy example shows that even when $(X,p)$ is 
smooth, for $L\subseteq \Derlog{X}_p$ the ideals
$\{I_k(L)\}_k$ cannot detect
whether $L=\Derlog{X}_p$.

\begin{example}
\label{ex:counterexample}
Let $(X,0)$ be the origin in $\C^2$, defined by coordinates $x=y=0$.
The following logarithmic vector fields generate
$\Derlog{X}_p$:
$$
\eta_1=x\frac{\partial}{\partial x},\quad
\eta_2=y\frac{\partial}{\partial x},\quad
\eta_3=x\frac{\partial}{\partial y},\quad
\eta_4=y\frac{\partial}{\partial y}.
$$
Let $L=\mathscr{O}_{\C^2,0}\{\eta_2,\eta_3,\eta_1-\eta_4\}$.
Then $L\subsetneq \Derlog{X}_p$, even though
$I_k(L)=I_k(\Derlog{X}_p)$ for $k=1,2$
and $L$ is closed under the Lie bracket.

Similar examples hold for the origin in higher dimensions, as the
action of $\SL(n,\C)$ and $\GL(n,\C)$ on $\C^n$ have the same
orbit structure,
and a simple argument shows that the two modules of
vector fields generated by these actions have
the same Fitting ideals.
This example may then be extended to apply
to all smooth germs of codimension $>1$.
\end{example}

\section{Logarithmic vector fields of smooth germs}
\label{sec:logsmooth}
We first study a smooth analytic germ $(X,p)$ in $\C^n$.
Because of the coherence of 
the sheaf of logarithmic vector fields and the genericity of
smooth points,
we will later use this to
study non-smooth germs. 

The (only) example is:

\begin{example}
\label{ex:smoothgerm}
Let $(X,p)$ in $\C^n$ be a smooth germ of dimension $d<n$.
Choose local coordinates $y_1,\ldots,y_n$ for $\C^n$ near $p$ so that
the ideal of germs vanishing on $X$ is
$I(X)=(y_1,\ldots,y_{n-d})$ in $\mathscr{O}_{\C^n,p}$.
Then the vector fields
\begin{equation}
\label{eqn:vfssmoothgerms}
\left\{\frac{\partial}{\partial y_{k}}\right\}_{k=n-d+1,\ldots,n}
\qquad
\mathrm{and}
\qquad
\left\{y_j\frac{\partial}{\partial y_i}\right\}_{i,j\in\{1,\ldots,n-d\}}
\end{equation}
are certainly in $\Derlog{X}_p$.
\end{example}

We begin by describing a criterion, Theorem
\ref{thm:smoothgencriteria},
for having a complete set of
generators for $\Derlog{X}_p$.
It will follow that 
\eqref{eqn:vfssmoothgerms} is a complete set
of generators for Example \ref{ex:smoothgerm}.

\subsection{A derivative for vector fields}
First, we make a definition used in our criterion.
For $\eta\in\Der_{\C^n,p}$ with $\eta(p)=0$, we shall
define a directional derivative
$d(\widehat{\eta})_{(p)}:T_p \C^n\to T_p \C^n$ of $\eta$ at $p$ by
choosing local coordinates $x_1,\ldots,x_n$ near $p$, writing
$\eta=\sum_{i=1}^n g_i \frac{\partial}{\partial x_i}$,
and then
defining
\begin{equation}
d(\widehat{\eta})_{(p)}(v)=\sum_{i=1}^n
d(g_i)_{(p)}(v)\frac{\partial}{\partial x_i}(p),
\end{equation}
where
we use the canonical coordinate on $\C$ to identify $T_0\C$ with $\C$
so that $d(g_i)_{(p)}(v)\in\C$
(or, interpret $d(g_i)_{(p)}(v)$ as a directional
derivative).
This operation does not depend on a choice of coordinates, as shown by
applying the 
following lemma to $\{x_i-x_i(p)\}_{i}$:

%[NODO: only reason for $f(p)=0$ is that maps are between correct
%spaces.  Only reason for $\eta(p)=0$ is that we then only need an
%identification of $T_p \C$ with $\C$] 
%
\begin{lemma}
\label{lemma:whatiswidehat}
If $\eta(p)=0$ and $f(p)=0$, then
$d(\eta(f))_{(p)}= df_{(p)}\circ d(\widehat{\eta})_{(p)}$.
\end{lemma}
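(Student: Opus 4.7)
The plan is to unwind both sides in local coordinates and observe that the hypothesis $\eta(p)=0$ makes the Leibniz-rule cross terms vanish.

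First I would pick local coordinates $x_1,\ldots,x_n$ near $p$ and write $\eta=\sum_{i=1}^n g_i \frac{\partial}{\partial x_i}$. Since $\eta(p)=0$, each $g_i(p)=0$. Then for any test tangent vector $v\in T_p\C^n$, I would compute $d(\eta(f))_{(p)}(v)$ by applying the ordinary product rule to the function $\eta(f)=\sum_i g_i\,\frac{\partial f}{\partial x_i}$, obtaining
\begin{equation*}
d(\eta(f))_{(p)}(v)=\sum_{i=1}^n d(g_i)_{(p)}(v)\cdot\frac{\partial f}{\partial x_i}(p)+\sum_{i=1}^n g_i(p)\cdot d\!\left(\tfrac{\partial f}{\partial x_i}\right)_{(p)}(v).
\end{equation*}
The second sum vanishes because every $g_i(p)=0$, so only the first sum survives.

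Next I would expand the right-hand side: by definition $d(\widehat{\eta})_{(p)}(v)=\sum_i d(g_i)_{(p)}(v)\,\frac{\partial}{\partial x_i}(p)\in T_p\C^n$, and applying $df_{(p)}$ to this tangent vector gives
\begin{equation*}
df_{(p)}\bigl(d(\widehat{\eta})_{(p)}(v)\bigr)=\sum_{i=1}^n d(g_i)_{(p)}(v)\cdot\frac{\partial f}{\partial x_i}(p),
\end{equation*}
which is exactly what remained of the left-hand side. Equality follows.

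There is no real obstacle here: the lemma is essentially a restatement of the product rule, with the hypothesis $\eta(p)=0$ playing the role of killing the unwanted second-order terms. The role of $f(p)=0$ is only to ensure both sides are naturally interpreted as elements of $\C$ under the canonical identifications $T_0\C\cong\C$ used in the definition of $d(\widehat{\eta})_{(p)}$; it does not enter the computation itself. The only minor point to mention is that, as a consequence of $\eta(p)=0$, the value $\eta(f)(p)$ automatically vanishes, so $d(\eta(f))_{(p)}$ lands in $T_0\C\cong\C$ in a manner compatible with the identification.
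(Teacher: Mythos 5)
Your proof is correct and follows essentially the same route as the paper: write $\eta$ in coordinates, apply the product rule to $\eta(f)=\sum_i g_i\,\partial f/\partial x_i$, and note that the hypothesis $\eta(p)=0$ kills the second term, leaving exactly $df_{(p)}\circ d(\widehat{\eta})_{(p)}$. The only cosmetic difference is that the paper first reduces by linearity to a single term $\eta=g_i\,\partial/\partial x_i$, whereas you carry the full sum throughout; both are fine.
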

\begin{proof}
Choose coordinates $(x_1,\ldots,x_n)$ near $p$.
Since both sides are linear in $\eta$, it suffices to
prove the claim when 
$\eta=g_i \frac{\partial}{\partial x_i}$,
with $g_i(p)=0$.
In this case, 
\begin{align*}
d(\eta(f))_{(p)}(v)
&=d(g_i)_{(p)}(v)\cdot \frac{\partial f}{\partial x_i}(p)+g_i(p)\cdot
d\left(\frac{\partial f}{\partial x_i}\right)_{(p)}(v) \\
%&=d(g_i)_{(p)}(v)\cdot df_{(p)}\left(\frac{\partial}{\partial
%x_i}(p)\right)+0 \\
&=df_{(p)}\left(d(g_i)_{(p)}(v)\frac{\partial}{\partial x_i}(p)
\right) +0\\
&=df_{(p)}\left( d(\widehat{\eta})_{(p)}(v)\right).\qedhere
\end{align*}
\end{proof}

\subsection{Criterion for a generating set}

We are now able to state our criterion.

\begin{theorem}
\label{thm:smoothgencriteria}
Let
$(X,p)$ be an analytic germ in $\C^n$ which is of pure dimension
$d<n$.
Let $\eta_1,\ldots,\eta_m\in\Derlog{X}_p$, with
$L_{\C}=\C\{\eta_i\}_{i=1,\ldots,m}$ and
$L=\mathscr{O}_{\C^n,p}\{\eta_i\}_{i=1,\ldots,m}$.
Let $L_{\C,0}$ (respectively, $L_{0}$) denote the set of $\xi\in L_{\C}$
(respectively, $\xi\in L$) with $\xi(p)=0$.
In $\mathscr{O}_{\C^n,p}$, let
$I=I(X)$ be the ideal of functions
vanishing on $X$, and let $\mathscr{M}_p$ be the maximal ideal.
The following are equivalent:
\begin{enumerate}
\item
\label{en:a}
$(X,p)$ is smooth and $L=\Derlog{X}_p$;
\item
\label{en:b}
$\dim(\bracket{L}_p)=d$ and the map
$$\alpha:L_{\C,0}\to\End_{\C}(I/\mathscr{M}_p\cdot I)$$
defined by
$\alpha(\xi)=(f\mapsto \xi(f))$
is surjective;
\item
\label{en:c}
$\dim(\bracket{L}_p)=d$ and the map
$$\beta:L_{\C,0}\to\End_{\C}( T_p \C^n/\bracket{L}_p)$$
defined by
$\beta(\xi)=d(\widehat{\xi})_{(p)}$
is surjective.
\end{enumerate}
In \eqref{en:b} or \eqref{en:c}, we could equivalently have used $L_{0}$ instead of
$L_{\C,0}$.
\end{theorem}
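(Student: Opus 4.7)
The strategy is to first show that the assumption $\dim\bracket{L}_p=d$ in either \eqref{en:b} or \eqref{en:c} already forces $(X,p)$ to be smooth, then to apply the explicit presentation of $\Derlog{X}_p$ from Example~\ref{ex:smoothgerm} together with Nakayama's lemma and a duality coming from Lemma~\ref{lemma:whatiswidehat}. For the smoothness step, I would pick $\xi_1,\dots,\xi_d\in L$ with $\xi_i(p)$ linearly independent and consider the holomorphic map $\Phi(t_1,\dots,t_d)=\phi_{\xi_1}^{t_1}\circ\cdots\circ\phi_{\xi_d}^{t_d}(p)$ near $0\in\C^d$; each flow $\phi_{\xi_i}^{t}$ locally preserves both $X$ and each irreducible component of $(X,p)$ since $\xi_i\in\Derlog{X}_p$, and $d\Phi_{(0)}$ is injective, so the image of $\Phi$ is a smooth $d$-dimensional germ $(M,p)\subseteq(X,p)$. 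Irreducibility forces $M$ to lie in a single irreducible component $X_1$ of $(X,p)$, and then $M=X_1$ by dimension; the same flow argument would place $M$ inside any other component $X_2$ of $(X,p)$, forcing $\dim(X_1\cap X_2)=d$, which is impossible for distinct irreducible components of $(X,p)$. Thus $(X,p)=M$ is smooth.

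Having established smoothness, I work in the coordinates of Example~\ref{ex:smoothgerm}, in which $I=(y_1,\dots,y_{n-d})$ and $\Derlog{X}_p$ is freely generated over $\Ocnz$ by $\{\partial/\partial y_k:k>n-d\}\cup\{y_j\partial/\partial y_i:i,j\leq n-d\}$. The implication \eqref{en:a}$\Rightarrow$\eqref{en:b} is then immediate: $\bracket{\Derlog{X}_p}_p=T_p X$ has dimension $d$, and a direct calculation shows that $\alpha(y_j\partial/\partial y_i)$ is an elementary matrix on the basis $\bar y_1,\dots,\bar y_{n-d}$ of $I/\mathscr{M}_p I$, so varying $i,j$ exhausts $\End_{\C}(I/\mathscr{M}_p I)\cong\End_{\C}(\C^{n-d})$. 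For \eqref{en:b}$\Rightarrow$\eqref{en:a}, I apply Nakayama to the finitely generated $\Ocnz$-module $\Derlog{X}_p$: it suffices to show $\Derlog{X}_p\subseteq L+\mathscr{M}_p\Derlog{X}_p$, i.e., that each free generator lies in $L+\mathscr{M}_p\Derlog{X}_p$. Surjectivity of $\alpha$ produces, for each $(i,j)$, an element $\xi_{ij}\in L_{\C,0}$ whose expansion in the Example~\ref{ex:smoothgerm} basis has $y_j\partial/\partial y_i$-coefficient equal to $1$ at $p$ and every other $y_{j'}\partial/\partial y_{i'}$-coefficient in $\mathscr{M}_p$, so that $\xi_{ij}\equiv y_j\partial/\partial y_i\pmod{\mathscr{M}_p\Derlog{X}_p}$; meanwhile $\dim\bracket{L}_p=d$ produces $\widetilde\xi_k\in L_{\C}$ with $\widetilde\xi_k(p)=\partial/\partial y_k|_p$, and subtracting an appropriate $\C$-linear combination of the $\xi_{ij}$ cleans up the remaining $y_j\partial/\partial y_i$-correction to give $\partial/\partial y_k\in L+\mathscr{M}_p\Derlog{X}_p$.

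For \eqref{en:b}$\Leftrightarrow$\eqref{en:c}, smoothness forces $\bracket{L}_p=T_p X$, so the pairing $(f,v)\mapsto df_{(p)}(v)$ descends to a perfect pairing between $I/\mathscr{M}_p I$ and $T_p\C^n/\bracket{L}_p$ (both are $(n-d)$-dimensional with dual bases $\{\bar y_i\}$ and $\{dy_i|_{(p)}\}$ for $i\leq n-d$). Lemma~\ref{lemma:whatiswidehat} then yields, for $\xi\in L_{\C,0}$ and $f\in I$, that $d(\xi(f))_{(p)}=df_{(p)}\circ d(\widehat\xi)_{(p)}$, so under the pairing $\alpha(\xi)$ becomes the transpose of $\beta(\xi)$; this simultaneously verifies well-definedness of $\beta$ as an endomorphism of the quotient (for any $\eta\in L$, $d(\widehat\xi)_{(p)}(\eta(p))=-[\xi,\eta](p)\in\bracket{\Derlog{X}_p}_p=\bracket{L}_p$) and the equivalence of the two surjectivity statements. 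Finally, for the concluding remark, any $\xi=\sum a_i\eta_i\in L_0$ splits as $\xi_0+\zeta$ with $\xi_0=\sum a_i(p)\eta_i\in L_{\C,0}$ and $\zeta\in\mathscr{M}_p L$, and both $\alpha$ and $\beta$ vanish on $\mathscr{M}_p L$ ($\alpha$ because $\zeta(I)\subseteq\mathscr{M}_p I$, and $\beta$ because $d(\widehat\zeta)_{(p)}$ takes values in $\bracket{L}_p$), so replacing $L_{\C,0}$ by $L_0$ does not affect the images. The hardest part will be the Nakayama step in \eqref{en:b}$\Rightarrow$\eqref{en:a}, where the abstract surjectivity of $\alpha$ must be translated into concrete elements of $L$ whose residues modulo $\mathscr{M}_p\Derlog{X}_p$ exhaust a basis of $\Derlog{X}_p/\mathscr{M}_p\Derlog{X}_p$, coupling value-at-$p$ information with the $\alpha$-image data and canceling cross-terms between the two kinds of generators.
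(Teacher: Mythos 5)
Your overall architecture is close to the paper's: smoothness is extracted from $\dim(\bracket{L}_p)=d$ first, then Nakayama's lemma is applied in the coordinates of Example~\ref{ex:smoothgerm}, and \eqref{en:b}$\Leftrightarrow$\eqref{en:c} comes from the perfect pairing identifying $\alpha(\xi)$ with the transpose of $\beta(\xi)$ (this is exactly the paper's map $\rho$ and diagram~\eqref{eqn:rhodiagram}); your reduction of $L_0$ to $L_{\C,0}$ is also the paper's argument, and your bracket identity $d(\widehat{\xi})_{(p)}(\eta(p))=-[\xi,\eta](p)$ is a nice alternative to the paper's use of Lemma~\ref{lemma:whatiswidehat} for the well-definedness of $\beta$. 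Your smoothness step is genuinely different: the paper quotes Lemma~\ref{lemma:smoothpt} (via Rossi's theorem), whereas you integrate $d$ fields of $L$ and compare the resulting smooth $d$-dimensional germ with the irreducible components. That argument is correct, but be aware it rests on the unproved (though standard, and citable from Saito) fact that local flows of elements of $\Derlog{X}_p$ preserve $X$ and each of its irreducible components; this is input of the same depth as the Rossi theorem behind Lemma~\ref{lemma:smoothpt}, which is already available and could simply be cited.

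The one genuine gap is in the Nakayama step for \eqref{en:b}$\Rightarrow$\eqref{en:a}: you assert that $\Derlog{X}_p$ is ``freely generated'' by the fields of \eqref{eqn:vfssmoothgerms}. Freeness is false (e.g.\ $y_2\cdot(y_1\tfrac{\partial}{\partial y_1})-y_1\cdot(y_2\tfrac{\partial}{\partial y_1})=0$, and $\Derlog{X}_p$ is not free for a smooth germ of codimension $\geq 2$), and, more importantly, Example~\ref{ex:smoothgerm} only claims these fields \emph{lie in} $\Derlog{X}_p$; that they \emph{generate} it is, in the paper's logical order, a consequence of this very theorem. So your reduction ``it suffices to show each generator lies in $L+\mathscr{M}_p\Derlog{X}_p$'' currently rests on an unproved claim. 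The gap is easy to fill without circularity: for $\xi=\sum_i a_i\tfrac{\partial}{\partial y_i}\in\Derlog{X}_p$ one has $a_i=\xi(y_i)\in I=(y_1,\dots,y_{n-d})$ for $i\leq n-d$, which writes $\xi$ in the module $N$ generated by \eqref{eqn:vfssmoothgerms}; alternatively, argue as the paper does, proving only $\ker(\alpha')\subseteq\mathscr{M}_p\cdot N$ with $N\subseteq\Derlog{X}_p$ and concluding $\Derlog{X}_p\subseteq L+\mathscr{M}_p\cdot\Derlog{X}_p$ directly, which never requires knowing $N=\Derlog{X}_p$. With that fact supplied (and the word ``freely'' deleted, plus the routine check that $\alpha$ is well defined, i.e.\ $\xi(\mathscr{M}_p\cdot I)\subseteq\mathscr{M}_p\cdot I$ for $\xi$ vanishing at $p$), your construction of the $\xi_{ij}$ and the cross-term cancellation producing $\tfrac{\partial}{\partial y_k}\in L+\mathscr{M}_p\cdot\Derlog{X}_p$ is correct, and the remaining equivalences go through.
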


The smoothness of $(X,p)$ will follow from 
this lemma.

\begin{lemma}
\label{lemma:smoothpt}
If $(X,p)$ is an analytic germ in $\C^n$ of dimension $d$,
with $p\in X$,
then $\dim(\bracket{\Derlog{X}_p}_p)\leq d$, with equality 
if and only if $(X,p)$ is smooth.
\end{lemma}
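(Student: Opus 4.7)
The plan is to reduce $(X,p)$ to a product $(X',p')\times(\C^k,0)$ with $k=\dim\bracket{\Derlog{X}_p}_p$ and $\bracket{\Derlog{X'}_{p'}}_{p'}=0$, then read off the conclusion from dimensions. This product decomposition is essentially the construction of the logarithmic stratum through $p$ in \cite[\S3]{Sa}.

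I would proceed by induction on $k$. When $k\geq 1$, choose $\eta\in\Derlog{X}_p$ with $\eta(p)\neq 0$ and apply the holomorphic straightening theorem to find coordinates $(z_1,\ldots,z_n)$ near $p$ with $\eta=\partial/\partial z_1$. The logarithmic condition forces $\partial^j f/\partial z_1^j\in I(X)$ for every $f\in I(X)$ and every $j\geq 0$, and summing the Taylor series then shows $f(z_1+t,z_2,\ldots,z_n)\in I(X)$; hence $(X,p)$ is invariant under $z_1$-translation and factors locally as $(X,p)\cong(Y,q)\times(\C,0)$ for some germ $(Y,q)\subseteq\C^{n-1}$. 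A direct check using $I(X)=I(Y)\cdot\mathscr{O}_{\C^n,p}$ shows that $\Derlog{X}_p$ is generated by $\partial/\partial z_1$ together with lifts of $\Derlog{Y}_q$, whence $\bracket{\Derlog{X}_p}_p=\C\cdot(\partial/\partial z_1)|_p\oplus\bracket{\Derlog{Y}_q}_q$ and the induction continues with $\dim\bracket{\Derlog{Y}_q}_q=k-1$.

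Once the product decomposition is in hand the two conclusions are immediate from dimensions: $d=\dim(X,p)=\dim(X',p')+k$ gives $k\leq d$, and equality forces $\dim(X',p')=0$, so the reduced germ $(X',p')$ is a single point and $(X,p)\cong(\C^k,0)$ is smooth of dimension $d$. Conversely, if $(X,p)$ is smooth of dimension $d$, then Example~\ref{ex:smoothgerm} exhibits $d$ logarithmic vector fields with linearly independent values at $p$, so $\dim\bracket{\Derlog{X}_p}_p\geq d$ and equality follows from the upper bound just proved.

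The main obstacle is the inductive step: verifying that straightening a single non-vanishing logarithmic vector field cleanly peels off a $\C$-factor, both set-theoretically (convergence of the Taylor-series argument showing $z_1$-translation invariance of $X$) and at the level of logarithmic modules (the description of $\Derlog{Y\times\C}$, which is what ensures that $\dim\bracket{\Derlog{\cdot}}$ drops by exactly one when a $\C$-factor is removed).
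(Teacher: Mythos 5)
Your argument is correct, but it takes a different route from the paper at the level of what is proved versus cited: the paper simply invokes Rossi's Theorem as stated in \cite{hausermuller} (part (c) gives $\dim\bracket{\Derlog{X}_p}_p\leq d$ outright, and part (b) gives the product decomposition $(X,p)\cong(\C^d,0)\times Y$ in the equality case), together with the Existence Lemma of \cite{hausermuller} for the converse, whereas you re-prove the needed decomposition from scratch by straightening one nonvanishing logarithmic vector field at a time and inducting on $k=\dim\bracket{\Derlog{X}_p}_p$, then get the converse from the explicit fields of Example~\ref{ex:smoothgerm}. Your inductive step is sound: translation invariance of $X$ in the $z_1$-direction is most cleanly obtained by evaluating $t\mapsto f(x_1+t,x_2,\ldots,x_n)$ at points $x\in X$ (all its $t$-derivatives at $0$ vanish since $\partial^j f/\partial z_1^j\in I(X)$) and then using reducedness, or equivalently by integrating the flow of $\eta$ as in \cite[\S3]{Sa}; and for the key identity $\bracket{\Derlog{X}_p}_p=\C\cdot(\partial/\partial z_1)|_p\oplus\bracket{\Derlog{Y}_q}_q$ you do not actually need the full module statement $\Derlog{(Y\times\C)}_p=\mathscr{O}\cdot\partial/\partial z_1+\mathscr{O}\cdot\Derlog{Y}_q$ (which is true but requires an extra flatness or power-series-in-$z_1$ argument): expanding the coefficients of a logarithmic field in powers of $z_1$ and noting that $\sum_k z_1^k g_k(z')\in I(Y\times\C)$ forces each $g_k\in I(Y)$ already gives the inclusion of values at $p$, and the reverse inclusion is the trivial lift. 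What your approach buys is a self-contained proof (essentially a proof of the relevant part of Rossi's Theorem) at the cost of length; what the paper's buys is brevity by black-boxing exactly that decomposition. One cosmetic point: Example~\ref{ex:smoothgerm} is stated for $d<n$, so the case $d=n$ (where $(X,p)=(\C^n,p)$ and every vector field is logarithmic) should be dispatched separately, as should the base case $k=0$ of your induction, where equality $d=0$ means the reduced germ is the point $\{p\}$ and hence smooth.
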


\begin{proof}
Let $k=\dim(\bracket{\Derlog{X}_p}_p)$.
By a theorem of Rossi \cite[Rossi's Theorem, (c)]{hausermuller},
$(X,p)$ has dimension $\geq k$; hence, $k\leq d$.
If $k=d$, then by (b) of Rossi's Theorem, there is a reduced analytic germ
$Y$ such that $(X,p)\cong (\C^d,0)\times Y$; but then $Y$
is of 
dimension $0$, and hence smooth. 
Conversely, if $(X,p)$ is smooth, then 
\cite[Existence Lemma]{hausermuller} shows that $k\geq d$, and thus $k=d$.
\end{proof}

\begin{proof}[Proof of Theorem \ref{thm:smoothgencriteria}]
We first show that $\dim(\bracket{L}_p)=d$ implies that $(X,p)$ is
smooth.  Since $L\subseteq \Derlog{X}_p$,
$\bracket{L}_p\subseteq \bracket{\Derlog{X}_p}_p$.
Since by Lemma \ref{lemma:smoothpt} $\bracket{\Derlog{X}_p}_p$ has dimension
$\leq d$,
we have $\bracket{L}_p=\bracket{\Derlog{X}_p}_p$
and
both spaces have dimension $d$.  By Lemma \ref{lemma:smoothpt} again, $(X,p)$
is smooth.

Thus we may assume in all cases that $(X,p)$ is smooth,
and
$\bracket{L}_p=\bracket{\Derlog{X}_p}_p=T_pX$ have dimension $d$.
Choose
local coordinates $(y_1,\ldots,y_n)$ vanishing at $p$ 
so that
$I=(y_1,\ldots,y_{n-d})$, just as in Example \ref{ex:smoothgerm}.

To show that $\alpha$ is well-defined, 
let $D$ be the submodule of $\Derlog{X}_p$ consisting of vector fields
vanishing at $p$, and let
$\alpha':D\to\End_\C(I/\mathscr{M}_p\cdot I)$ be
defined by $\alpha'(\xi)(f)=\xi(f)$.
Let $\xi\in D$.
Since $\xi\in\Derlog{X}_p$, by definition 
the application of $\xi$ to functions gives a linear map $I\to I$.
If $f\in I$ and $g\in
\mathscr{M}_p$, then
$\xi(g)\in\mathscr{M}_p$ (as $\xi(p)=0$) and $\xi(f)\in I$.  By the
product rule, $\xi(f\cdot g)\in \mathscr{M}_p\cdot I$ and hence
$\xi(\mathscr{M}_p\cdot I)\subseteq \mathscr{M}_p\cdot I$.
Thus the map $f\mapsto \xi(f)$ descends to an endomorphism of 
$I/\mathscr{M}_p\cdot I$, as claimed, and
so $\alpha'$ is well-defined.
Since $L_{\C,0}\subseteq L_{0}\subseteq D$,
restricting $\alpha'$ to one of these subspaces gives a well-defined
map, including $\alpha=\alpha'|_{L_{\C,0}}$.

We will show that \eqref{en:a} implies \eqref{en:b}.
The vector space $I/\mathscr{M}_p I$ is generated by
$\{y_1,\ldots,y_{n-d}\}$, and (as in Example \ref{ex:smoothgerm})
$y_j\frac{\partial}{\partial y_i}\in L_0$ has
\begin{equation*}
\left(y_j \frac{\partial}{\partial y_i}\right)(y_k)=\delta_{ik} y_j.
\end{equation*}
Since 
it follows that
$\left\{\alpha'\left(y_j\frac{\partial}{\partial y_i}\right)\right\}_{1\leq i,j\leq
n-d}$ is a $\C$-basis of $\End_{\C}(I/\mathscr{M}_{p}\cdot I)$,
$\alpha'|_{L_{0}}$ is surjective.

However, we must show that $\alpha=\alpha'|_{L_{\C,0}}$ is surjective.
If $\xi\in L$ and $g\in\mathscr{M}_p$, then $g\cdot \xi\in L_0$ and 
$g\cdot \xi(I)\subseteq \mathscr{M}_p\cdot I$; hence, 
$\mathscr{M}_p\cdot L\subseteq \ker(\alpha')$.
Since $\dim(\bracket{L}_p)=d$, we may choose a basis for $L_{\C}$
consisting of $\eta_1,\ldots,\eta_d$ (with $\{\eta_i(p)\}$ linearly
independent) and elements of $L_{\C,0}$.
Let $\zeta\in L_0$, and write
$\zeta=\sum_{i} g_i \eta_i +\sum_{j} h_j \xi_j$, where $\xi_j\in
L_{\C,0}$.  Evaluating at $p$ and using the linear independence of
$\{\eta_i(p)\}$, we see that all $g_i\in\mathscr{M}_p$.
Let $\zeta'=\sum_{j} h_j(p) \xi_j\in L_{\C,0}$, and observe that
$$\zeta-\zeta'=\sum_{i} g_i \eta_i+\sum_j (h_j-h_j(p))\xi_j\in
\mathscr{M}_p\cdot L,$$
and hence
$\alpha'(\zeta)=\alpha'(\zeta')=\alpha(\zeta')$.
Thus $\alpha$ is surjective as well,
giving us \eqref{en:b}.

To show that \eqref{en:b} implies \eqref{en:a},
we will first show
\begin{equation}
\label{eqn:keralphaprime}
\ker(\alpha')\subseteq \mathscr{M}_p\cdot N,
\end{equation}
where $N$ is the submodule of $\Derlog{X}_p$ generated by the vector
fields in \eqref{eqn:vfssmoothgerms}.
If $\xi\in\ker(\alpha')$, then $\xi\in D$, and hence $\xi(p)=0$ and 
$\xi=\sum_{i} a_i\frac{\partial}{\partial y_i}$ with each
$a_i\in\mathscr{M}_p$.
Since $\xi\in\ker(\alpha')$, 
$\xi(y_i)\in \mathscr{M}_p\cdot I$ for $i=1,\ldots,n-d$.
Hence, we may write
\begin{equation}
\label{eqn:xiequals}
\xi=\sum_{i=1}^{n-d} \left(\sum_{j=1}^{n-d} b_{ij} y_j\right)
\frac{\partial}{\partial y_i}+\sum_{n-d<i\leq n}
a_i\frac{\partial}{\partial y_i},
\end{equation}
where $b_{i,j},a_i\in\mathscr{M}_p$.
Examining \eqref{eqn:xiequals}, it is clear that
$\xi\in\mathscr{M}_p\cdot N$, proving \eqref{eqn:keralphaprime}.

Now let $\zeta\in\Derlog{X}_p$.
Since $\{\eta_i(p)\}_{i=1}^d$ is a basis for
$\bracket{L}_p=\bracket{\Derlog{X}}_p$,
write $\zeta(p)=\sum_{i=1}^d \lambda_i \eta_i(p)$ for $\lambda_i\in\C$.
Let $\zeta'=\zeta-\sum_{i=1}^d \lambda_i \eta_i$, so that
$\zeta'(p)=0$.
By the surjectivity of $\alpha$, there exists a $\xi\in L_{\C,0}$ such
that $\alpha'(\zeta')=\alpha(\xi)$.
Thus,
$$
\zeta-\left(\sum_{i=1}^d \lambda_i \eta_i+\xi\right)\in\ker(\alpha'),
$$
and so by \eqref{eqn:keralphaprime},
$\zeta\in L+\mathscr{M}_p\cdot N$.
Consequently,
$$
\Derlog{X}_p\subseteq L+\mathscr{M}_p\cdot N\subseteq
L+\mathscr{M}_p\cdot \Derlog{X}_p\subseteq \Derlog{X}_p,$$
so that by Nakayama's Lemma, $L=\Derlog{X}_p$.
This proves \eqref{en:b}.

To show that $\beta$ in \eqref{en:c} is well-defined,
observe that by the smoothness of $(X,p)$,
$\bracket{L}_p=T_p X=\cap_{f\in I} \ker(df_{(p)})$.
Let $\xi\in\Derlog{X}_p$ vanish at $p$.
Since for any $f\in I$ we have
$\xi(f)\in I$, by Lemma \ref{lemma:whatiswidehat},
$d(\widehat{\xi})_{(p)}(T_p X)\subseteq \ker(df_{(p)})$.
Since this is true of all $f\in I$,
we have
$d(\widehat{\xi})_{(p)}(T_p X)\subseteq T_p X$,
and so
$d(\widehat{\xi})_{(p)}:T_p \C^n\to T_p \C^n$ factors through the quotient
$T_p \C^n\to T_p\C^n/T_pX$ to give a well-defined element
$\beta(\xi)$.

We now show the equivalence of \eqref{en:b} and \eqref{en:c}.
Define the $\C$-linear map
$\overline{\rho}:I\to \Hom_{\C} (T_p \C^n/T_p X,T_0 \C)$
by $\overline{\rho}(f)=df_{(p)}$; this is well-defined since
for $f\in I$, $T_p X\subseteq \ker(df_{(p)})$.
Since $y_1,\ldots,y_{n-d}\in I$, we know $\overline{\rho}$ is
surjective.
If $g\in\mathscr{M}_p$ and $f\in I$, then 
$d(f\cdot g)_{(p)}=0$, and hence $\mathscr{M}_p\cdot I\subseteq
\ker(\overline{\rho})$.
Conversely, if $f=\sum_{i=1}^{n-d} a_i y_i\in I$ is in $\ker(\overline{\rho})$, then
$0=df_{(p)}=\sum_{i=1}^{n-d} a_i(p) d(y_i)_{(p)}$; by the linear
independence of $d(y_i)_{(p)}$, all $a_i\in\mathscr{M}_p$.
Hence $\mathscr{M}_{p}\cdot I=\ker(\overline{\rho})$, 
and so $\overline{\rho}$ factors through to a vector space isomorphism
$\rho: I/\mathscr{M}_p\cdot I\to \Hom_{\C}(T_p\C^n/T_p X,T_0\C)$.

For any $\xi\in\Derlog{X}_p$ vanishing at $p$,
Lemma \ref{lemma:whatiswidehat} shows that we have
the following commutative diagram.
\begin{equation}
\label{eqn:rhodiagram}
\begin{split}
\xymatrix{
I/\mathscr{M}_p\cdot I
\ar[r]^{\alpha(\xi)} \ar[d]_{\rho}
&
I/\mathscr{M}_p\cdot I
\ar[d]_{\rho} \\
\Hom_{\C}(T_p\C^n/T_p X,T_0\C)
\ar[r]^{(\beta(\xi))^*}
&
\Hom_{\C}(T_p\C^n/T_p X,T_0\C)
}
\end{split}
\end{equation}
Since $\rho$ is an isomorphism, 
it follows from
\eqref{eqn:rhodiagram}
that $\alpha$ is surjective if and only if $\beta$
is surjective, and hence \eqref{en:b} is equivalent to \eqref{en:c}.
\end{proof}

\begin{remark}
Since $\bracket{L}_p=T_p X$, the map $\beta$ in Theorem 
\ref{thm:smoothgencriteria}
describes how vector fields behave with respect to the normal space
to $X$ at $p$ in $\C^n$, $T_p\C^n/T_p X$.
So, too, does $\alpha$:
algebraically, the quotient of the Zariski tangent spaces
of $\C^n$ and $X$ at $p$
is the dual of
$$(I+\mathscr{M}_p^2)/\mathscr{M}_p^2\cong I/(I\cap
\mathscr{M}_p^2)= I/\mathscr{M}_p\cdot I,$$
where the equality is because in this case,
$I\cap \mathscr{M}_p^2=\mathscr{M}_p\cdot I$.
The map $\rho$ constructed in the proof of Theorem
\ref{thm:smoothgencriteria} is essentially an identification between
the dual of this ``Zariski normal space'' to $X$ at $p$ and
the dual of the usual normal space to $X$ at $p$.
\end{remark}

\begin{remark}
If $(X,p)$ is an arbitrary analytic germ
with an irreducible component 
of dimension $d$ in $\C^n$,
then by the coherence of $\Derlog{X}$ and Theorem
\ref{thm:smoothgencriteria},
$\Derlog{X}_p$ requires at least $d+(n-d)^2$ generators;
the component of smallest dimension gives the strongest bound.
Are germs for which $\Derlog{X}_p$ is minimally generated
in some way special, as is the case for hypersurfaces (see
\S\ref{sec:fd})?
\end{remark}

\subsection{Fitting ideals for smooth germs}

We now compute the Fitting ideals associated to $\Derlog{X}_p$ for $(X,p)$
smooth.

\begin{proposition}
\label{prop:smoothideals1}
Let $(X,p)$ be a smooth germ of dimension $d$ in $\C^n$, with $d<n$.
In $\mathscr{O}_{\C^n,p}$, we have 
\begin{equation}
I_k(\Derlog{X}_p)=
\left\{\begin{array}{ll}
 (I(X))^{k-d} & \textrm{if $k>d$} \\
 (1) & \textrm{otherwise}
\end{array}\right..
\end{equation}
\end{proposition}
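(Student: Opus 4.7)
The plan is to compute directly in the coordinates of Example \ref{ex:smoothgerm}, where $I(X) = (y_1,\ldots,y_{n-d})$. Theorem \ref{thm:smoothgencriteria} applied to the vector fields in \eqref{eqn:vfssmoothgerms} shows that those vector fields are a generating set for $\Derlog{X}_p$: indeed, the $\C$-span of the $y_j\frac{\partial}{\partial y_i}$ (with $i,j\le n-d$) clearly surjects onto $\End_\C(I/\mathscr{M}_p\cdot I)$, since these operators act as the elementary matrices on the basis $\{y_1,\ldots,y_{n-d}\}$ of $I/\mathscr{M}_p\cdot I$. So the Fitting ideals can be read off from the Saito matrix of \eqref{eqn:vfssmoothgerms}.

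After an appropriate reordering of the generators, this Saito matrix takes the block form
\begin{equation*}
A=\begin{pmatrix} 0 & M \\ I_d & 0 \end{pmatrix},
\end{equation*}
where $I_d$ is the $d\times d$ identity corresponding to $\partial/\partial y_{n-d+1},\ldots,\partial/\partial y_n$, and $M$ is the $(n-d)\times(n-d)^2$ matrix whose column indexed by $(i,j)\in\{1,\ldots,n-d\}^2$ is $y_j\,e_i$.

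Next I would analyze $k\times k$ minors using the block structure. Because of the zero blocks, any non-zero $k\times k$ minor of $A$ must pair its bottom rows with columns in the $I_d$-block and its top rows with columns in the $M$-block; hence it factors (up to sign) as a minor of $I_d$ of some size $s$ times a minor of $M$ of size $k-s$, with $0\le s\le \min(k,d)$. When $k\le d$, choosing $s=k$ produces $\pm 1$, giving $I_k(\Derlog{X}_p)=(1)$.

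For $k>d$, I would compute the minors of $M$ explicitly. For a $t\times t$ submatrix of $M$ to have non-zero determinant, the chosen rows $\{i_1,\ldots,i_t\}$ must coincide (as a set) with the first coordinates of the chosen columns; after reordering, the determinant is $\pm y_{j_1}\cdots y_{j_t}$ with the $j_b\in\{1,\ldots,n-d\}$ arbitrary. Hence $I_t(M)=(y_1,\ldots,y_{n-d})^t=I(X)^t$. Combining, for $k>d$ the non-zero $k\times k$ minors of $A$ lie in $I(X)^{k-s}\subseteq I(X)^{k-d}$ for $0\le s\le d$, with equality when $s=d$, so $I_k(\Derlog{X}_p)=I(X)^{k-d}$. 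The only real point requiring care is the combinatorial characterization of which column/row selections in $M$ give non-zero contributions, but the sparsity of $M$ makes this straightforward.
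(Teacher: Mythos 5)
Your proposal is correct and follows essentially the same route as the paper: invoke Theorem \ref{thm:smoothgencriteria} to see that the vector fields \eqref{eqn:vfssmoothgerms} generate $\Derlog{X}_p$, then compute the $k\times k$ minors of their Saito matrix via its block structure (your $\begin{pmatrix}0 & M\\ I_d & 0\end{pmatrix}$ is just a reordering of the paper's block-diagonal matrix with $Y$-blocks and $J_d$), with the same combinatorial analysis of which row/column selections give nonzero minors.
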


\begin{proof}
Choose coordinates as in Example \ref{ex:smoothgerm}, with
$I(X)=(y_1,\ldots,y_{n-d})$.  Any Saito matrices have rows
corresponding to the coefficients of 
$\frac{\partial}{\partial y_1},\ldots,\frac{\partial}{\partial y_n}$.
By Theorem \ref{thm:smoothgencriteria}, the vector
fields of \eqref{eqn:vfssmoothgerms} generate $\Derlog{X}_p$.
A Saito matrix $M$ of these generators might be, in block form, 
$$
M=\begin{pmatrix}
Y &   &        &   & 0\\
  & Y & \\
  &   & \ddots & \\
  &   &        & Y & \\
0 &   &        &   & J_{d}
\end{pmatrix},
\textrm{ where }
Y=\begin{pmatrix} y_1 & y_2 & \cdots & y_{n-d} \end{pmatrix},
$$
and for any $\ell$, $J_\ell$ denotes the $\ell\times \ell$ identity matrix.
Then $I_k(\Derlog{X}_p)$ is generated by the
determinants of the $k\times k$ submatrices of
$M$.
If $k\leq d$, then a $k\times k$ submatrix of $J_d$ is
nonsingular, and hence $I_k(\Derlog{X}_p)$ contains $1$.
Now let $k>d$.
Any submatrix which uses more than one column from the same $Y$ block will have
determinant zero, as there is an obvious relation between the columns.
Also, any submatrix which uses a non-symmetric choice of rows and
columns of $J_{d}$ will have a zero row or column.
Hence, the only nonzero generators of $I_k(\Derlog{X}_p)$ will be
$$
\det\begin{pmatrix}
y_{i_1} & \\
 & \ddots & \\
 &     & y_{i_{\ell}} & \\
 &     &              & J_{k-\ell}
\end{pmatrix}=
\prod_{j=1}^\ell y_{i_j},
$$
where each $i_j\in\{1,\ldots,n-d\}$, and (since $0\leq k-\ell\leq d$) $k-d\leq \ell\leq k$.
Thus $I_k(\Derlog{X}_p)$ is generated by all monomials of degree $k-d$
in $\{y_1,\ldots,y_{n-d}\}$, which is exactly as claimed.
\end{proof}

\subsection{Geometry of the Fitting ideals for smooth
germs}

We may give a geometric interpretations to one of these ideals.
Let $\alpha$ and $\beta$ be defined as in Theorem
\ref{thm:smoothgencriteria}. 

\begin{proposition}
\label{prop:smoothideals2}
Let $(X,p)$ be a smooth germ of dimension $d$ in $\C^n$, with $d<n$.
Let $L\subseteq \Derlog{X}_p$ be a submodule, and let
$L_0\subseteq L$ be the submodule of vector fields vanishing at $p$.
Then the following are equivalent:
\begin{enumerate}
\item
\label{en:sm2a}
in $\mathscr{O}_{\C^n,p}$, we have 
$I_{d+1}(L)\nsubseteq \mathscr{M}_p^2$;
\item
\label{en:sm2b}
$\dim(\bracket{L}_p)=d$ and the map $\alpha|_{L_0}$ is nonzero;
\item
\label{en:sm2c}
$\dim(\bracket{L}_p)=d$ and the map $\beta|_{L_0}$ is nonzero.
\end{enumerate}
\end{proposition}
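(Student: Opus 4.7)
My plan is to analyze $I_{d+1}(L)$ modulo $\mathscr{M}_p^2$ by exploiting the block structure of a Saito matrix in the coordinates of Example \ref{ex:smoothgerm}, and then transport the result to $\alpha|_{L_0}$ and $\beta|_{L_0}$. Concretely, take coordinates so that $I=I(X)=(y_1,\ldots,y_{n-d})$ and write any Saito matrix $A$ for $L$ in block form $A=\left(\begin{smallmatrix} A_1\\ A_2\end{smallmatrix}\right)$, with $A_1$ the $n-d$ rows indexed by $\partial/\partial y_1,\ldots,\partial/\partial y_{n-d}$ and $A_2$ the remaining $d$ rows. Since every $\eta\in L\subseteq\Derlog{X}_p$ satisfies $\eta(y_j)\in I$ for $j\leq n-d$, entries of $A_1$ lie in $I\subseteq\mathscr{M}_p$.

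First I show that \eqref{en:sm2a} forces $\dim(\bracket{L}_p)=d$. Always $\bracket{L}_p\subseteq T_pX$, so the dimension is at most $d$; if it were strictly less, $A_2(p)$ would have rank $<d$, so every $d\times d$ minor of $A_2$ would lie in $\mathscr{M}_p$. A $(d+1)\times(d+1)$ submatrix of $A$ must use at least one row from $A_1$. If it uses two or more such rows, each term in the expansion already has two factors from $I\subseteq\mathscr{M}_p$, hence the minor lies in $\mathscr{M}_p^2$. If it uses exactly one row from $A_1$, expanding along that row gives $\sum\pm a\cdot M$ with $a\in\mathscr{M}_p$ and $M$ a $d\times d$ minor of $A_2$, now also in $\mathscr{M}_p$; again the minor lies in $\mathscr{M}_p^2$. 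Thus $I_{d+1}(L)\subseteq\mathscr{M}_p^2$, a contradiction. So from here on I may assume $\dim(\bracket{L}_p)=d$.

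Under this assumption, I choose generators $\eta_1,\ldots,\eta_m$ so that $\eta_1(p),\ldots,\eta_d(p)$ form a basis of $T_p X$ and (after subtracting $\C$-linear combinations of the first $d$) $\eta_{d+1},\ldots,\eta_m\in L_0$. Then columns $d+1,\ldots,m$ of $A$ have all entries in $\mathscr{M}_p$, while the submatrix $A_2[1..d]$ of the first $d$ columns of $A_2$ has nonzero determinant at $p$, hence is a unit. A $(d+1)\times(d+1)$ submatrix using $\geq 2$ rows from $A_1$ is in $\mathscr{M}_p^2$ as before; one using exactly one row $i\leq n-d$ of $A_1$ plus all of $A_2$, but also $\geq 2$ columns from $\{d+1,\ldots,m\}$, contributes $\geq 3$ factors in $\mathscr{M}_p$ and lies in $\mathscr{M}_p^3$. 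The only minors not automatically in $\mathscr{M}_p^2$ come from one row $i\leq n-d$ of $A_1$, all of $A_2$, and columns $\{1,\ldots,d,s\}$ for some $s>d$; expanding along row $i$, the unique term not automatically in $\mathscr{M}_p^2$ is $\pm a_{i,s}\cdot\det A_2[1..d]$, so modulo $\mathscr{M}_p^2$ such a minor equals $\pm a_{i,s}$ times a unit. Hence $I_{d+1}(L)\nsubseteq\mathscr{M}_p^2$ iff there exist $i\leq n-d$ and $s>d$ with $a_{i,s}\notin\mathscr{M}_p^2$; since $a_{i,s}\in I$, this is the same as $a_{i,s}\notin\mathscr{M}_p\cdot I$ (using the identity $\mathscr{M}_p^2\cap I=\mathscr{M}_p\cdot I$ noted after the proof of Theorem \ref{thm:smoothgencriteria}), which by definition of $\alpha$ means $\alpha(\eta_s)\neq 0$ for some $s>d$. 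A direct check, writing an arbitrary $\xi\in L_0$ as $\sum b_s\eta_s$ with $b_s\in\mathscr{M}_p$ for $s\leq d$ (using the linear independence of $\eta_1(p),\ldots,\eta_d(p)$), shows $\alpha'(\xi)=\sum_{s>d}b_s(p)\,\alpha(\eta_s)$, so this in turn is equivalent to $\alpha|_{L_0}\neq 0$, giving \eqref{en:sm2a} $\Leftrightarrow$ \eqref{en:sm2b}. Finally, \eqref{en:sm2b} $\Leftrightarrow$ \eqref{en:sm2c} is immediate from the commutative diagram \eqref{eqn:rhodiagram} in the proof of Theorem \ref{thm:smoothgencriteria}, since $\rho$ is an isomorphism. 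The main obstacle is the careful bookkeeping in the previous paragraph, identifying precisely which minors can survive modulo $\mathscr{M}_p^2$; everything else follows mechanically from the definitions.
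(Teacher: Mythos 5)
Your proof is correct, but it takes a genuinely different route from the paper's. The paper argues through $\beta$: it evaluates a $(d+1)\times(d+1)$ submatrix $N$ along curves $t\mapsto N(p+tv)$ and invokes the linear-algebra Lemma~\ref{lemma:linearalgebra} (Jacobi's formula and the adjugate) to translate $\det(N)\notin\mathscr{M}_p^2$ into the statement that $d(\widehat{\xi})_{(p)}$ has a nonzero component transverse to $T_pX$; the equivalence with $\alpha$ is then obtained from the diagram \eqref{eqn:rhodiagram}. You instead work directly with $\alpha$: exploiting the block structure of an adapted Saito matrix (rows indexed by $\partial/\partial y_1,\dots,\partial/\partial y_{n-d}$ having entries in $I$, columns of $L_0$-generators having entries in $\mathscr{M}_p$), you reduce every $(d+1)\times(d+1)$ minor modulo $\mathscr{M}_p^2$ and isolate the surviving entries $a_{i,s}=\eta_s(y_i)$, then use $I\cap\mathscr{M}_p^2=\mathscr{M}_p\cdot I$ to identify their classes with $\alpha(\eta_s)$, and finally reduce from arbitrary $\xi\in L_0$ to the chosen generators by the same $\mathscr{M}_p\cdot L\subseteq\ker(\alpha')$ computation that appears in the proof of Theorem~\ref{thm:smoothgencriteria}; the step to $\beta$ is, as in the paper, just \eqref{eqn:rhodiagram}. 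Your approach is more elementary (no Jacobi formula, no adjugate rank argument, no curves) and makes the connection between the Fitting ideal and $\alpha$ completely explicit entrywise, at the cost of heavier minor bookkeeping; the paper's approach is coordinate-lighter at the level of minors and gives the geometric $\beta$-statement directly. One small slip: in the case of exactly one $A_1$-row and at least two columns from $\{d+1,\dots,m\}$, a term may have only two (not three) factors in $\mathscr{M}_p$, e.g.\ when the $A_1$-row entry itself sits in one of those columns, so such minors lie in $\mathscr{M}_p^2$ rather than $\mathscr{M}_p^3$; this is harmless, since membership in $\mathscr{M}_p^2$ is all your argument needs.
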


We need a lemma for the proof.
Let $M(p,q,\C)$ be the space of $p\times q$ matrices with complex
entries.
Since $M(p,q,\C)$ is a vector space, there is a canonical
identification $T_{A}M(p,q,\C)\simeq M(p,q,\C)$ for all $A$.

\begin{lemma}
\label{lemma:linearalgebra}
Let $\gamma:(-\epsilon,\epsilon)\to M(m,m,\C)$ be a
differentiable curve. Define $A=\gamma(0)$, $B=\gamma'(0)$,
and $\delta(t)=\det(\gamma(t))$. 
Then the following are equivalent:
\begin{enumerate}[(i)]
\item
\label{en:deltaprime}
%\item[(i)] 
$\delta(0)=0$ and $\delta'(0)\neq 0$;
%\item
%\label{en:omegablambda}
%%\item[(ii)]
%$A$ has rank $m-1$ (so that its adjugate $\adj(A)$ has
%rank $1$ and 
%$\adj(A)=\lambda\cdot \omega$ for $\lambda\in M(m,1,\C)$ and
%$\omega\in M(1,m,\C)$), and $\omega B\lambda\neq 0$,
\item
\label{en:bkera}
%\item[(iii)]
$A$ has rank $m-1$ and $B\cdot \ker(A)\nsubseteq \im(A)$.
\end{enumerate} 
\end{lemma}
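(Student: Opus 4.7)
The plan is to reduce everything to Jacobi's formula for the derivative of the determinant,
\[
\delta'(0) = \tr\!\bigl(\adj(A)\cdot B\bigr),
\]
together with the standard rank/kernel properties of the adjugate. Since $\delta(0)=\det(A)$, the vanishing $\delta(0)=0$ in (i) is equivalent to $\rank(A)\le m-1$, and one condition in (ii) is $\rank(A)=m-1$; so a case split on $\rank(A)$ reduces the problem to the case $\rank(A)=m-1$, and I would dispatch the other cases first.

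If $\rank(A)=m$, then $\delta(0)\neq 0$, so (i) fails, and (ii) clearly fails. If $\rank(A)\le m-2$, every $(m-1)\times(m-1)$ minor of $A$ vanishes, so $\adj(A)=0$ and $\delta'(0)=0$, so (i) fails; (ii) also fails by the rank hypothesis. This leaves the interesting case $\rank(A)=m-1$, in which $\det(A)=0$ automatically.

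In that case I would exploit the identities $A\cdot \adj(A)=\adj(A)\cdot A = \det(A)\cdot I = 0$, which give $\im(\adj(A))\subseteq \ker(A)$ and $\im(A)\subseteq \ker(\adj(A))$. Because $\rank(A)=m-1$ we know $\dim\ker(A)=1$ and $\rank(\adj(A))=1$ (a single $(m-1)$-minor is nonzero), so $\dim\ker(\adj(A))=m-1=\dim\im(A)$. Both containments are thus equalities: $\im(\adj(A))=\ker(A)$ and $\ker(\adj(A))=\im(A)$. Writing $\adj(A)=v\, w^{T}$ with $v$ spanning $\ker(A)$ and $w^{T}$ a linear functional whose kernel is $\im(A)$, one computes
\[
\delta'(0) = \tr(v\, w^{T} B) = w^{T} B v.
\]

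The main step is then the equivalence $\delta'(0)\neq 0 \iff w^{T}Bv\neq 0 \iff Bv\notin\ker(w^{T})=\im(A) \iff B\cdot\ker(A)\nsubseteq \im(A)$, where the last equivalence uses that $v$ spans the one-dimensional space $\ker(A)$. Combining the three cases yields (i)$\Leftrightarrow$(ii). The only delicate point is the correct identification of $\ker(w^{T})$ with $\im(A)$, which follows from the dimension count above; everything else is either Jacobi's formula or a standard property of the adjugate.
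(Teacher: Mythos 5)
Your proposal is correct and follows essentially the same route as the paper: Jacobi's formula $\delta'(0)=\tr(\adj(A)\cdot B)$, the adjugate identities forcing $\im(\adj(A))=\ker(A)$ and $\ker(\adj(A))=\im(A)$ when $\rank(A)=m-1$, the rank-one factorization of $\adj(A)$, and the computation $\delta'(0)=w^{T}Bv$. Your explicit case split on $\rank(A)$ (in particular noting $\adj(A)=0$ when $\rank(A)\le m-2$) is slightly more detailed than the paper's terse remark that both conditions force $\rank(A)=m-1$, but the argument is the same.
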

\begin{proof}
Since the statement is obvious when $m=1$, assume $m>1$.
Let $\adj(A)$ denote the adjugate of $A\in M(p,p,\C)$.
% and for $p=1$ define $\adj(A)=1$.
By Jacobi's formula,
% (NODO: find a reference for this. Saw in Spivak's calculus on
% manifolds, ex. 2-15, p.24, but seems pretty standard)
\begin{equation}
\label{eqn:jacobis}
\delta'(0)=\tr(\adj(\gamma(0))\cdot \gamma'(0))=\tr(\adj(A)\cdot B).
\end{equation}
By hypothesis and \eqref{eqn:jacobis}, in both cases we must have
$\rank(A)=m-1$, so assume this.
Since $\det(A)=0$,
we have $A\cdot \adj(A)=\adj(A)\cdot A=\det(A)\cdot I=0$, and thus
\begin{equation}
\label{eqn:adjAinclusions}
\im(\adj(A))\subseteq \ker(A)\qquad\mathrm{and}\qquad
\im(A)\subseteq \ker(\adj(A)).
\end{equation}
By the first inclusion of
\eqref{eqn:adjAinclusions}, $\rank(\adj(A))\leq 1$;
as also $\adj(A)\neq 0$, we
have $\rank(\adj(A))=1$
and may write $\adj(A)=\lambda\cdot \omega$ for some
nonzero
$\lambda\in M(m,1,\C)$ and $\omega\in M(1,m,\C)$.
By dimensional considerations, the inclusions of
\eqref{eqn:adjAinclusions} are equalities, and we may identify
these spaces using the structure of $\adj(A)$:
\begin{equation}
\label{eqn:adjAequals}
\begin{split}
\im(\adj(A))&=\ker(A)=\C\cdot \lambda \\
 % \qquad 
  \text{and}
  \qquad \im(A)&=\ker(\adj(A))=\{z\in M(m,1,\C):\omega\cdot z=0\}.
\end{split}
\end{equation}
By \eqref{eqn:jacobis}, 
\begin{equation*}
\label{eqn:longcalculation}
\delta'(0)
=\tr(\lambda\cdot (\omega \cdot B))
 =(\omega \cdot B) \cdot\lambda,
\end{equation*}
and in particular,
$\C\cdot \delta'(0)=\omega\cdot B\cdot \ker(A)$.
The equivalence follows from this equation and \eqref{eqn:adjAequals}.
\end{proof}

\begin{comment}
Lemma \ref{lemma:linearalgebrakey} immediately shows

\begin{lemma}
\label{lemma:linearalgebraconclusion}
Let $f:\C^n,p\to M(m,m,\C)$ be differentiable, and let
$g=\det\circ f\in\mathscr{O}_{\C^n,p}$.
Then the following are equivalent:
\begin{enumerate}[(i)]
\item
\label{en:lemma2dg}
$g(p)=0$ and $dg_{(p)}\neq 0$
\item
\label{en:lemma2msquared}
In $\mathscr{O}_{C^n,p}$, we have
$g\in \mathscr{M}_p$ and $g\notin \mathscr{M}_p^2$.
\item
\label{en:lemma2vector}
$\rank(f(p))=m-1$
and there exist $w\in T_p\C^n$ and $v\in M(m,1,\C)$ such that
$\ker(f(p))=\C\cdot v$ and $df_{(p)}(w)\cdot v\notin \im(f(p))$.
\end{enumerate} 
\end{lemma}
\end{comment}

\begin{proof}[Proof of \ref{prop:smoothideals2}]
By \eqref{eqn:rhodiagram}, \eqref{en:sm2b} is equivalent to
\eqref{en:sm2c}.
Let $M$ be a Saito matrix of a generating set
of $L$, 
and $N$ be a $(d+1)\times(d+1)$ submatrix of $M$.
By Proposition \ref{prop:smoothideals1}, $\det(N)\in\mathscr{M}_p$.
Observe that if $\dim(\bracket{L}_p)<d$, then $\rank(N(p))<d$ and so
by Lemma \ref{lemma:linearalgebra} we have $\det(N)\in\mathscr{M}_p^2$.

It remains only to show that if $\dim(\bracket{L}_p)=d$, then
\eqref{en:sm2a} is equivalent to \eqref{en:sm2b}.
Assume, then, that 
$\dim(\bracket{L}_p)=d$, and hence $T_pX=\bracket{L}_p$.
Choose coordinates as in Example \ref{ex:smoothgerm}, with
$I(X)=(y_1,\ldots,y_{n-d})$.
Choose $\eta_1,\ldots,\eta_d\in L$
spanning $\bracket{L}_p$ at $p$ such that
$L=\mathscr{O}_{\C^n,p}\{\eta_i\}_{i=1}^d+L_0$.
Let $M$ be a Saito matrix of $L$ with columns corresponding to $\eta_1,\ldots,\eta_d$
and a set of generators of $L_{0}$. 
It is enough to consider determinants of submatrices of $M$.
Observe that $M(p)$ is zero, except for the $d\times d$ submatrix
$A$ of rank $d$ 
corresponding to $\eta_1,\ldots,\eta_d$ and
$\frac{\partial}{\partial
y_{n-d+1}},\ldots,\frac{\partial}{\partial y_n}$.
By Lemma \ref{lemma:linearalgebra}, it is necessary for
any $(d+1)\times(d+1)$ submatrix $N$ of $M$
to contain $A$ in order to have $\det(N)\notin
\mathscr{M}_p^2$. 

Thus, consider a submatrix $N$ of $M$ containing $A$, as well as a
column corresponding to $\xi\in L_{0}$ and a row corresponding to
$\frac{\partial}{\partial y_j}$, with $j<n-d+1$. 
For any $v\in T_p\C^n$, let $\gamma_v(t)=N(p+tv)$, and  
observe that $\gamma_v(0)$ has rank $d$,
$\im(\gamma_v(0))$ corresponds to a projection of $T_p X$
onto $V=\C\{\frac{\partial}{\partial y_j}(p),
\frac{\partial}{\partial
y_{n-d+1}}(p),\ldots,\frac{\partial}{\partial y_n}(p)
\}$,
$\ker(\gamma_v(0))$ corresponds to $\C\cdot \xi$,
and $\gamma'_v(0)\cdot \ker(\alpha_v(0))$ corresponds to a projection of
$\C\cdot d(\widehat{\xi})_{(p)}(v)$ onto $V$.
By Lemma \ref{lemma:linearalgebra},
$(\det\circ \gamma_v)'(0)\neq
0$ if and only if the $\frac{\partial}{\partial y_j}$--component of  
$d(\widehat{\xi})_{(p)}(v)$ is nonzero;
moreover, such a $v\notin T_p X$ since
$d(\widehat{\xi})_{(p)}(T_pX)\subseteq T_p X$.

If $\det(N)\in \mathscr{M}_p\setminus
\mathscr{M}_p^2$, then there exists a $v$ so that 
$\frac{d}{dt}(\det(N(p+tv)))|_{t=0}\neq 0$,
and by the above 
$d(\widehat{\xi})_{(p)}(v)$ has a nonzero $\frac{\partial}{\partial
y_j}$--coefficient, so that $\beta(\xi)\neq 0$. 
Conversely, if $\beta(\xi)\neq 0$, then there exists a $v\notin T_p X$ and a
$j<n-d+1$ so that  
$d(\widehat{\xi})_{(p)}(v)$ has a nonzero $\frac{\partial}{\partial
y_j}$--coefficient, and by the above argument
the corresponding submatrix $N$ has
$\det(N)\notin \mathscr{M}_p^2$. 
\end{proof}

\section{Logarithmic vector fields of arbitrary germs}
\label{sec:logarb}
In this section, we study the logarithmic vector fields for an arbitrary
analytic germ $(X,p)$ in $\C^n$ by using the properties of logarithmic vector
fields at nearby smooth points.
We shall study the associated ideals and their geometric meaning.

\subsection{Choice of representatives}
We shall often
choose representatives of an analytic germ $(X,p)$ and
a collection of vector fields in
$\Derlog{X}_p$,
and then study the behavior of these representatives at points near to $p$.
It is convenient to choose representatives on an open $U$ containing
$p$ so that the behavior of the representatives
on $U$ reflect only the behavior of the original germs.
For any germ $f$, a specific choice of representative
will be denoted by $f'$ or $f''$.

\begin{lemma}
\label{lemma:goodrep}
Let $(X,p)$ be an analytic germ in $\C^n$ and let
$\eta_1,\ldots,\eta_m\in\Derlog{X}_p$.
For any choice of representatives of these germs on open neighborhoods
of $p$, there exists an
open neighborhood $U$ of $p$ such that:
\begin{enumerate}
\item \label{cond:repsdefined} all representatives are defined on $U$;
\item \label{cond:irrdecomp} if $X=\cup_{s\in S} X_s$ is an
irreducible decomposition of germs, then there is a corresponding irreducible
decomposition $X'=\cup_{s\in S} X'_s$ on $U$
where $(X'_s,p)=(X_s,p)$;
\item \label{cond:inderlogprime} each $\eta'_i\in\Derlog{X'}(U)$;
\item \label{cond:thesegenderlog} if
$\mathscr{O}_{\C^n,p}\cdot\{\eta_i\}_{i=1}^m=\Derlog{X}_p$, then
$\{\eta'_i\}_{i=1}^m$ generates the sheaf $\Derlog{X'}$ on $U$;
\item \label{cond:polydisc} $U$ is, e.g., a polydisc, so that $U$ is
convex (using real line segments).
\end{enumerate}
\end{lemma}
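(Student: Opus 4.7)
The strategy is to begin with the given representatives $X',\eta'_1,\ldots,\eta'_m$ and successively shrink the open neighborhood of $p$ to secure each condition in turn, ending by restricting to a polydisc. Along the way I would also pick finite generators $f_1,\ldots,f_k$ of $I(X)\subseteq \Ocnp$ with representatives $f'_j$, and representatives $X'_s$ of each germ $X_s$. Let $U_0$ be an open neighborhood of $p$ on which all these representatives are defined; by coherence of the ideal sheaf of $X'$, after a first shrinking I may assume that $f'_1,\ldots,f'_k$ generate $I(X')$ as an ideal sheaf on $U_0$.

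For (\ref{cond:irrdecomp}), I would invoke the standard fact that an irreducible analytic germ admits arbitrarily small representatives that are globally irreducible analytic sets; after shrinking to $U_1\subseteq U_0$, each $X'_s$ is irreducible on $U_1$. The equality of germs $X=\cup_s X_s$ lifts to an equality of representatives on a suitable neighborhood, since any global irreducible component of $X'$ not passing through $p$ is discarded by further shrinking, and $(X'_s,p)=(X_s,p)$ is automatic from the choice of $X'_s$.

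For (\ref{cond:inderlogprime}), the hypothesis $\eta_i\in\Derlog{X}_p$ yields germs $b_{ij\ell}\in\Ocnp$ with $\eta_i(f_j)=\sum_\ell b_{ij\ell} f_\ell$; these identities of germs extend to identities of sections on some $U_2\subseteq U_1$. Since $I(X')$ is sheaf-generated on $U_2$ by $f'_1,\ldots,f'_k$, the vector field $\eta'_i$ preserves the ideal sheaf at every stalk and hence on every open subset, so $\eta'_i\in\Derlog{X'}(U_2)$. For (\ref{cond:thesegenderlog}), if $\{\eta_i\}$ generates $\Derlog{X}_p$, then by the coherence of $\Derlog{X'}$ a finite set of sections generating one stalk generates the sheaf on a neighborhood, producing $U_3\subseteq U_2$. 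Finally, choose a polydisc $U$ centered at $p$ contained in $U_3$, which automatically satisfies (\ref{cond:repsdefined}) and (\ref{cond:polydisc}).

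The only delicate step is (\ref{cond:irrdecomp}): ensuring that a representative of an irreducible germ is globally irreducible on some sufficiently small neighborhood. This is a standard consequence of the local parametrization theorem (or equivalently of the structure of the minimal primes of $\mathscr{O}_{X,p}$), but is the only point where a nontrivial analytic-geometric result is invoked. The remaining conditions are formal, each reducing to shrinking so that finitely many local identities hold or so that a coherent sheaf generation statement at $p$ propagates to a neighborhood.
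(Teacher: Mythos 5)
Your proposal is correct and follows the same successive-shrinking skeleton as the paper; the one step where you genuinely diverge is condition \eqref{cond:inderlogprime}. The paper disposes of it by a small trick: it assumes without loss of generality that the hypothesis of \eqref{cond:thesegenderlog} holds (enlarging the list $\eta_1,\ldots,\eta_m$ by extra generators of $\Derlog{X}_p$ if necessary), and then coherence of $\Derlog{X'}$ gives a neighborhood on which the representatives generate the sheaf, so in particular each $\eta''_i$ is a section of $\Derlog{X'}$ there. You instead prove \eqref{cond:inderlogprime} directly: choose germ generators $f_1,\ldots,f_k$ of $I(X)$, extend the finitely many ideal-membership identities $\eta_i(f_j)=\sum_\ell b_{ij\ell}f_\ell$ to sections on a neighborhood, and use Cartan's coherence of the ideal sheaf $\mathscr{I}_{X'}$ to know the $f'_j$ generate it near $p$, whence $\eta'_i$ preserves $\mathscr{I}_{X'}$ stalkwise and hence sectionwise. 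Both routes work; the paper's is slicker (it needs only the coherence of $\Derlog{X'}$, which it must invoke anyway for \eqref{cond:thesegenderlog}), while yours is more explicit and does not require modifying the given list of vector fields, at the cost of invoking coherence of the ideal sheaf as a second nontrivial input. One small caution on \eqref{cond:irrdecomp}: since you continue to shrink after securing irreducibility on $U_1$ and end with a polydisc $U\subseteq U_1$, you need the slightly stronger standard fact (which the paper builds in) that the representatives $X'_s$ can be chosen irreducible on a fundamental system of neighborhoods of $p$ (e.g., all sufficiently small polydiscs), not merely on the single set $U_1$; otherwise irreducibility could in principle be lost in the later shrinkings.
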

\begin{proof}
Without loss of generality, assume that the hypothesis of
\eqref{cond:thesegenderlog} is satisfied (by, e.g., increasing $m$).
Without mentioning it explicitly, each $U_i$ will be an open
neighborhood of $p$.
Let $X''$ and $\eta''_1,\ldots,\eta''_m$ be representatives, and let
$U_1$ be an open set on which
these representatives are all defined.

Decompose $(X,p)$ into irreducible components as in \eqref{cond:irrdecomp},
and find $U_2\subseteq U_1$ containing representatives
$X''_s$ of each $(X_s,p)$.
Since each $(X_s,p)$ is irreducible, there exists a 
$U_3\subseteq U_2$ with the property that
for any open neighborhood $V\subseteq U_3$ containing $p$,
$V\cap X''_s$ is irreducible in $V$,
and hence $\cup_{s\in S} \left( V\cap X''_s\right)$ is an irreducible
decomposition of $V\cap \left(\cup_{s\in S} X''_s\right)$ in $V$. 
Since $\cup_{s\in S} X''_s$ is a representative of $(X,p)$,
it and $X''$ are equal on some $U_4\subseteq U_3$.

By the coherence of $\Derlog{(U_4\cap X'')}$
and the hypothesis of \eqref{cond:thesegenderlog},
there is a $U_5\subseteq U_4$ on which
representatives of $\eta_1,\ldots,\eta_m$ can be found which generate
the sheaf $\Derlog{(U_5\cap X'')}$;
by the definition of germ, there is a $U_6\subseteq U_5$ on which
$\eta''_1,\ldots,\eta''_m$ generate $\Derlog{(U_6\cap X'')}$.

Finally, let $U\subseteq U_6$ be a polydisc containing $p$,
and
set $X'=U\cap X''$, $X'_s=U\cap X''_s$,
and $\eta'_i=\eta''_i|_{U}$.
It is easily checked that all conditions are satisfied.
\end{proof}

Observe that by \eqref{cond:irrdecomp}, $X'_s$ is irreducible in $U$,
and hence $I(X'_s)\subseteq \mathscr{O}_U(U)$ is prime and
$X'_s$ is pure-dimensional.
By \eqref{cond:polydisc}, if $f\in\mathscr{O}_U(U)$ has $f(q)=0$ for
some $q\in U$, then by Hadamard's Lemma we may write
$f=\sum_{i=1}^n f_i \cdot (x_i-x_i(q))$, 
%and it is readily checked that
with
$f_i\in\mathscr{O}_U(U)$.
More generally, if $f\in\mathscr{O}_U(U)$ has
$f\in\mathscr{M}_q^k\subseteq \mathscr{O}_{U,q}$, then
$f\in\mathscr{M}_q^k\subseteq \mathscr{O}_U(U)$.

\subsection{Symbolic powers of ideals}

Before proceeding, we recall the concept of the symbolic power of an
ideal.
Let $R$ be a commutative Noetherian ring with identity.
For a prime ideal $\p\subseteq R$, 
it is not necessarily true that $\p^\ell$ is $\p$-primary
(e.g., \cite[\S3.9.1]{eisenbud});
the $\p$-primary component of $\p^\ell$ is called the
\emph{$\ell$th symbolic power of $\p$} and is denoted by
$\p^{(\ell)}$.

A result originally due to Zariski and Nagata 
%There is the following characterization of $\p^{(\ell)}$, originally
%due to Zariski and Nagata, which 
says that $\p^{(\ell)}$ is the ideal
of functions which vanish to order $\geq \ell$ on $V(\p)$.

\begin{theorem}[{\cite[Corollary 1]{eisenbud-hochster}}]
\label{thm:eh}
Let $\p$ be a prime ideal of a ring $R$, and let $S$
be a set of maximal ideals $\mathscr{M}$ containing $\p$ such that
$R_\mathscr{M}/\p_{\mathscr{M}}$ is a regular local ring, and
$\cap_{\mathscr{M}\in S} \mathscr{M}=\p$.
Then
\begin{equation*}
\p^{(\ell)}
\supseteq \bigcap_{\mathscr{M}\in S} \mathscr{M}^\ell
\supseteq \bigcap_{\substack{\mathscr{M}\supseteq \p \\
\text{$\mathscr{M}$ maximal}}} \mathscr{M}^\ell,
\end{equation*}
with equalities if $R$ is regular.
\end{theorem}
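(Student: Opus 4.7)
The plan is to treat the two inclusions separately, with the first as the substantive content, then to address the equality case assuming $R$ is regular. The second inclusion is set-theoretic: by hypothesis $S$ is contained in the family of all maximal ideals containing $\p$, so intersecting $\mathscr{M}^\ell$ over the larger family only shrinks the result. For the first inclusion $\p^{(\ell)}\supseteq \bigcap_{\mathscr{M}\in S}\mathscr{M}^\ell$, I would use the standard identification $\p^{(\ell)}=\iota^{-1}(\p^\ell R_\p)$, where $\iota\colon R\to R_\p$ is the localization map, so it suffices to prove that any $f\in \bigcap_{\mathscr{M}\in S}\mathscr{M}^\ell$ satisfies $f/1\in \p^\ell R_\p$.

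I would proceed by induction on $\ell$, with base case $\ell=1$ being exactly the hypothesis $\bigcap_{\mathscr{M}\in S}\mathscr{M}=\p$. For the inductive step, the regularity of $R_\mathscr{M}/\p R_\mathscr{M}$ for each $\mathscr{M}\in S$ permits lifting a regular system of parameters of $\mathscr{M}/\p R_\mathscr{M}$ to elements $x_1,\ldots,x_h\in R$, yielding a local decomposition $\mathscr{M} R_\mathscr{M}=\p R_\mathscr{M}+(x_1,\ldots,x_h)R_\mathscr{M}$. Expanding $\mathscr{M}^\ell R_\mathscr{M}=\p R_\mathscr{M}\cdot \mathscr{M}^{\ell-1}R_\mathscr{M}+(x_1,\ldots,x_h)^\ell R_\mathscr{M}$, any $f\in \mathscr{M}^\ell R_\mathscr{M}$ splits into a contribution from $\p R_\mathscr{M}\cdot \mathscr{M}^{\ell-1}R_\mathscr{M}$ (to be handled by the inductive hypothesis applied factorwise) and a ``transverse'' contribution from $(x_1,\ldots,x_h)^\ell R_\mathscr{M}$. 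The hypothesis that the transverse contribution must vanish modulo $\p^\ell R_\p$ at every $\mathscr{M}\in S$, combined with $\bigcap_{\mathscr{M}\in S}\mathscr{M}=\p$, should force the coefficients of the transverse part to lie in $\p$, placing $f$ into $\p^{(\ell)}$.

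For equality under the assumption that $R$ is regular, the remaining inclusion $\p^{(\ell)}\subseteq \bigcap_{\mathscr{M}\supseteq\p}\mathscr{M}^\ell$ follows from a localization chain: for maximal $\mathscr{M}$, $\mathscr{M}^\ell$ is $\mathscr{M}$-primary, so $\mathscr{M}^\ell R_\mathscr{M}\cap R=\mathscr{M}^\ell$; thus $f\in \p^{(\ell)}$ gives $f/1\in \p^\ell R_\p$, and further localizing at any maximal $\mathscr{M}\supseteq \p$ places $f/1\in \p^\ell R_\mathscr{M}\subseteq \mathscr{M}^\ell R_\mathscr{M}$, hence $f\in \mathscr{M}^\ell$. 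Combined with the two forward inclusions (applied to $S$ equal to the full set of maximals containing $\p$), the three ideals coincide.

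The hard part will be making the inductive step precise: translating the local statement ``$f$ has order $\geq \ell$ at every $\mathscr{M}\in S$'' into the global statement $f/1\in \p^\ell R_\p$. The regularity hypothesis on $R_\mathscr{M}/\p R_\mathscr{M}$ supplies a well-behaved associated graded structure locally at each $\mathscr{M}$, but stitching this graded information consistently across $\mathscr{M}\in S$ to extract a statement about the $\p$-adic order of $f$ is the technical heart of \cite{eisenbud-hochster}, and will likely require a differential-operator-style characterization of symbolic powers in regular local rings to identify the transverse pieces cleanly.
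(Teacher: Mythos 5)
Note first that the paper does not prove this statement at all: it is imported verbatim as \cite[Corollary 1]{eisenbud-hochster}, so your sketch has to be measured against the actual Eisenbud--Hochster argument, and it does not reach it. The decisive error is in your equality step. From $f\in\p^{(\ell)}$, i.e.\ $f/1\in\p^\ell R_\p$, you claim that ``further localizing at any maximal $\mathscr{M}\supseteq\p$ places $f/1\in\p^\ell R_\mathscr{M}$.'' The localizations go the other way: since $\p\subseteq\mathscr{M}$, $R_\p$ is a further localization of $R_\mathscr{M}$ (the map is $R_\mathscr{M}\to R_\p$), so from membership in $\p^\ell R_\p$ you can only pull back to $(\p R_\mathscr{M})^{(\ell)}$, and the containment $(\p R_\mathscr{M})^{(\ell)}\subseteq\mathscr{M}^\ell R_\mathscr{M}$ is precisely the Zariski--Nagata theorem you are trying to prove. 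Indeed your chain never uses regularity of $R$, yet the inclusion $\p^{(\ell)}\subseteq\bigcap_{\mathscr{M}\supseteq\p}\mathscr{M}^\ell$ fails without it: in $R=k[x,y]/(xy)$ with $\p=(x)$ one has $x\in\p^{(2)}$ but $x\notin(x,y)^2$. Even in the regular case your intermediate claim is false: for the prime $\p\subset k[x,y,z]$ of the monomial curve $(t^3,t^4,t^5)$ and $f\in\p^{(2)}\setminus\p^2$, one has $f\notin\p^2R_\mathscr{M}$ at $\mathscr{M}=(x,y,z)$, although $f\in\mathscr{M}^2$ as the theorem (nontrivially) asserts. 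This direction is the deep one, classically handled by differential operators or by Eisenbud--Hochster's method, not by a formal localization chain.

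The inductive step for the first inclusion is also not yet an argument. The decomposition $\mathscr{M}R_\mathscr{M}=\p R_\mathscr{M}+(x_1,\ldots,x_h)R_\mathscr{M}$ and the resulting splitting of $f$ live in $R_\mathscr{M}$ and depend on the choice of $\mathscr{M}$ and of the lifts $x_i$, so there is no global element lying in $\bigcap_{\mathscr{M}'\in S}(\mathscr{M}')^{\ell-1}$ to which the inductive hypothesis could be applied ``factorwise''; and the assertion that the transverse coefficients are ``forced'' into $\p$ is exactly the content of the theorem, not a consequence of $\bigcap_{\mathscr{M}\in S}\mathscr{M}=\p$ (a single $\mathscr{M}$ gives nothing: $y^\ell\in(x,y)^\ell$ while $y^\ell\notin(x)$ in $k[x,y]$, so the whole family $S$ must be used in an essential, non-pointwise way). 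Your proposed fallback, a differential-operator characterization of symbolic powers in regular local rings, is also not available here: the inclusion $\bigcap_{\mathscr{M}\in S}\mathscr{M}^\ell\subseteq\p^{(\ell)}$ is asserted for arbitrary Noetherian $R$, with regularity assumed only for $R_\mathscr{M}/\p_\mathscr{M}$, and even over regular rings such characterizations carry characteristic restrictions. The genuine proof proceeds through Eisenbud--Hochster's ``Zariski's main lemma on holomorphic functions''; in the context of this paper the correct move is to cite that result, as the text does, rather than attempt a short self-contained derivation.
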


In particular, we have
\begin{corollary}
\label{cor:eisenbudhochster}
If $\p$ is a prime ideal of $\mathscr{O}_U(U)$, and
$N\subseteq \smooth(V(\p))$ is such that the closure
$\overline{N}=V(\p)$, then
\begin{equation*}
\p^{(\ell)}
=\bigcap_{p\in N} \mathscr{M}_p^\ell
=\bigcap_{p\in V(\p)} \mathscr{M}_p^\ell.
\end{equation*}
\end{corollary}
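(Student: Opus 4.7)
The plan is to apply Theorem \ref{thm:eh} to obtain the containment $\p^{(\ell)} \supseteq \bigcap_{p\in N} \mathscr{M}_p^\ell$, and then supplement with a local regularity computation for the reverse containment together with a density argument to bridge $N$ and $V(\p)$.

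To apply Theorem \ref{thm:eh} with $R = \mathscr{O}_U(U)$ and $S = \{\mathscr{M}_p : p \in N\}$, I would verify three hypotheses. Each $\mathscr{M}_p$ is maximal and contains $\p$ since $p \in N \subseteq V(\p)$. The quotient $R_{\mathscr{M}_p}/\p R_{\mathscr{M}_p}$ is regular because $p$ is a smooth point of $V(\p)$, so the analytic stalk $\mathscr{O}_{U,p}/\p\mathscr{O}_{U,p}$ is a regular local ring. Finally, $\bigcap_{p\in N} \mathscr{M}_p = \p$: any $f$ in this intersection vanishes on $N$, hence by continuity on $\overline{N} = V(\p)$, and since $\p$ is prime (hence radical) the analytic Nullstellensatz gives $I(V(\p)) = \p$. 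Theorem \ref{thm:eh} then yields
\[
 \p^{(\ell)}
 \supseteq \bigcap_{p \in N} \mathscr{M}_p^\ell
 \supseteq \bigcap_{p \in V(\p)} \mathscr{M}_p^\ell,
\]
where the second containment is trivial from $N \subseteq V(\p)$.

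For the reverse $\p^{(\ell)} \subseteq \bigcap_{p \in N} \mathscr{M}_p^\ell$, fix $p \in N$ and choose local coordinates as in Example \ref{ex:smoothgerm} so that $\p \mathscr{O}_{U,p} = (y_1, \ldots, y_{n-d})$ is generated by part of a regular system of parameters of the Noetherian regular local ring $\mathscr{O}_{U,p}$; then $\p^\ell \mathscr{O}_{U,p}$ is $\p\mathscr{O}_{U,p}$-primary. For $f \in \p^{(\ell)}$, choose $s \in R \setminus \p$ with $sf \in \p^\ell$. If $s/1$ were in $\p\mathscr{O}_{U,p}$ then $s$ would vanish on a neighborhood of $p$ in $V(\p)$; since $V(\p)$ is irreducible (as $\p$ is prime), the identity principle would then force $s \in I(V(\p)) = \p$, contradicting $s \in R \setminus \p$. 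Thus $s \notin \sqrt{\p^\ell \mathscr{O}_{U,p}}$, and primariness forces $f \in \p^\ell \mathscr{O}_{U,p} \subseteq \mathscr{M}_p^\ell \mathscr{O}_{U,p}$; the polydisc observation following Lemma \ref{lemma:goodrep} then gives $f \in \mathscr{M}_p^\ell$ as an ideal of $R$. To bridge $N$ and $V(\p)$: if $f \in \bigcap_{p \in N} \mathscr{M}_p^\ell$, then all partial derivatives of $f$ of order less than $\ell$ are holomorphic on $U$ and vanish on $N$, hence by continuity vanish on $\overline{N} = V(\p)$, so $f \in \mathscr{M}_q^\ell$ for every $q \in V(\p)$. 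Combining the resulting inclusions yields the asserted equalities.

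The main obstacle is the local step: it requires both the identity-principle contraction argument (to guarantee $s \notin \p \mathscr{O}_{U,p}$) and the polydisc observation (to transfer $f \in \mathscr{M}_p^\ell \mathscr{O}_{U,p}$ back to $f \in \mathscr{M}_p^\ell \subseteq R$), since Theorem \ref{thm:eh} only provides the ``$\supseteq$'' half of the desired equalities when $R$ is not regular.
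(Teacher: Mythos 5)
Your proposal is correct in substance, but it takes a genuinely different route from the paper for half of the statement. The paper's proof is three lines: it observes that $\mathscr{O}_U$ is regular and that $\bigcap_{p\in N}\mathscr{M}_p=I(N)=I(\overline{N})=\p$, and then invokes Theorem \ref{thm:eh} in full strength, using the clause ``with equalities if $R$ is regular'' to obtain both equalities at once. You instead use only the containment half of Theorem \ref{thm:eh} and re-prove the reverse inclusion $\p^{(\ell)}\subseteq\bigcap_{p\in N}\mathscr{M}_p^{\ell}$ by hand: at a smooth point the extended ideal is generated by part of a regular system of parameters, so its powers are primary (this is essentially the content of Lemma \ref{lemma:ci}), and the $s$-characterization of the symbolic power together with an identity-principle argument forces $f\in\mathscr{M}_p^{\ell}$ locally; your continuity-of-derivatives bridge then carries the vanishing order from $N$ to $\overline{N}=V(\p)$. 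What this buys is independence from the regularity clause (you never have to certify that the non-Noetherian ring $\mathscr{O}_U(U)$ counts as ``regular''), and the bridge step gives a transparent reason why the intersection over $N$ equals the one over $V(\p)$. What it costs: (i) twice you must pass from $f\in\mathscr{M}_q^{\ell}\mathscr{O}_{U,q}$ to $f\in\mathscr{M}_q^{\ell}$ in $\mathscr{O}_U(U)$ via the Hadamard/polydisc observation following Lemma \ref{lemma:goodrep}, so as written your argument proves the corollary for $U$ as in Lemma \ref{lemma:goodrep} (which covers every application in the paper) rather than for an arbitrary open $U$, whereas the paper's proof never leaves the global ring; (ii) the identification $\p\,\mathscr{O}_{U,p}=(y_1,\ldots,y_{n-d})$, i.e.\ that the global prime generates the full reduced germ ideal at a smooth point, and the irreducibility of $V(\p)$ used in your identity-principle step, both deserve a word of justification --- they are essentially the same points the paper absorbs into the hypothesis of Theorem \ref{thm:eh} that $R_{\mathscr{M}_p}/\p_{\mathscr{M}_p}$ be regular and into $I(\overline N)=\p$, so this is not a gap beyond what the paper itself glosses, but your route makes explicit use of these facts where the paper's one-line appeal to the equality clause does not.
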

\begin{proof}
$\mathscr{O}_U$ is a regular ring.
%Let $S=\{\mathscr{M}_p: p\in N\}$.
Since $\overline{N}=V(\p)$, we have
$$
%\cap_{\mathscr{M}\in S} \mathscr{M}
%=
\cap_{p\in N} \mathscr{M}_p
=I(N)
=I(\overline{N})
=\p.$$
Then apply Theorem \ref{thm:eh}.
\end{proof}

\begin{remark}
Since primary decomposition commutes with localization,
so does the symbolic power.
\end{remark}

It is also useful to know the following characterization of
$\p^{(\ell)}$ when $\p$ defines a complete intersection.

\begin{lemma}
\label{lemma:ci}
If $\p$ is a prime ideal in $\mathscr{O}_{U,p}$ generated by a regular
sequence, then
$\p^{(\ell)}=\p^\ell$ for all $\ell\geq 1$.
\end{lemma}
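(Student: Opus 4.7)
The strategy is to verify that, under the regular-sequence hypothesis, $\p^\ell$ is itself $\p$-primary. The conclusion $\p^{(\ell)}=\p^\ell$ then follows immediately, since by definition $\p^{(\ell)}$ is the $\p$-primary component of $\p^\ell$, and a primary ideal equals its own primary component.

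The main tool is the associated graded ring $\mathrm{gr}_{\p}(R):=\bigoplus_{k\geq 0}\p^k/\p^{k+1}$, where $R=\mathscr{O}_{U,p}$. Writing $\p=(f_1,\ldots,f_r)$ for the given regular sequence, the natural map $(R/\p)[T_1,\ldots,T_r]\to \mathrm{gr}_{\p}(R)$ sending $T_i$ to the class of $f_i$ in $\p/\p^2$ is surjective, and is in fact an isomorphism---a classical consequence of the acyclicity of the Koszul complex on a regular sequence. Since $\p$ is prime, $R/\p$ is a domain, so the polynomial ring---and therefore $\mathrm{gr}_{\p}(R)$---is a domain.

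Given this, I would show that every $r\in R\setminus \p$ is a nonzerodivisor on $R/\p^\ell$, which is equivalent to $\p^\ell$ being $\p$-primary: if this holds, the associated primes of $R/\p^\ell$ are contained in $\p$, and each contains $\sqrt{\p^\ell}=\p$, leaving $\p$ as the unique associated prime. So suppose $rs\in\p^\ell$ with $r\notin\p$; it suffices to conclude $s\in\p^\ell$. By Krull's intersection theorem applied to the Noetherian local ring $R$ (with $\p\subseteq\mathscr{M}_p$), $\bigcap_{k}\p^k=0$, so either $s=0$ (done) or there is a largest $k\geq 0$ with $s\in\p^k$. In the latter case, the class $\overline{s}\in \p^k/\p^{k+1}$ is a nonzero homogeneous element of $\mathrm{gr}_{\p}(R)$, while $\overline{r}\in R/\p=\mathrm{gr}_{\p}^0(R)$ is nonzero by hypothesis. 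Because $\mathrm{gr}_{\p}(R)$ is a domain, $\overline{r}\,\overline{s}\neq 0$, so $rs\in\p^k\setminus\p^{k+1}$; combined with $rs\in\p^\ell$, this forces $k\geq\ell$, whence $s\in\p^\ell$. The only nontrivial input is the polynomial-ring description of $\mathrm{gr}_{\p}(R)$, which is where the regular-sequence hypothesis actually enters; everything else is routine, so the sole obstacle is citing this standard fact correctly.
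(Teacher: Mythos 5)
Your proof is correct, but it takes a different route from the paper. The paper disposes of the lemma in one line: $\mathscr{O}_{U,p}$ is Cohen--Macaulay, and the statement is quoted from the literature (Proposition 3.76 of Vasconcelos' book), i.e., the standard fact that powers of a complete-intersection prime in a Cohen--Macaulay ring are unmixed. You instead give a self-contained argument: the Rees/quasi-regularity isomorphism $(R/\p)[T_1,\ldots,T_r]\cong \mathrm{gr}_{\p}(R)$ for $\p$ generated by a regular sequence makes $\mathrm{gr}_{\p}(R)$ a domain, and then a leading-form argument (using Krull's intersection theorem to get a well-defined order of vanishing) shows every element outside $\p$ is a nonzerodivisor on $R/\p^{\ell}$, so $\p^{\ell}$ is $\p$-primary and hence equals its $\p$-primary component $\p^{(\ell)}$. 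The steps all check out: the radical of $\p^{\ell}$ is $\p$, so the unique associated prime is $\p$, as you say. What your approach buys is generality and transparency --- it never uses Cohen--Macaulayness and works in any Noetherian local ring, with the regular-sequence hypothesis entering only through the graded-ring isomorphism; what the paper's approach buys is brevity, since for $\mathscr{O}_{U,p}$ the Cohen--Macaulay property is free and the needed statement is exactly in the cited reference.
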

\begin{proof}
$\mathscr{O}_{U,p}$ is Cohen-Macaulay
so this follows from, e.g.,
Proposition 3.76 of \cite{vasconcelos}.
\end{proof}

\subsection{Fitting ideals}

We can now prove an upper bound for the Fitting ideals of
logarithmic vector fields.

\begin{theorem}
\label{thm:onminors}
Let $(X,p)$ be an analytic germ in $\C^n$, with
$X=\cup_{s\in S} X_s$ the decomposition into irreducible
components. 
Let $1\leq k\leq n$.
Then in $\mathscr{O}_{\C^n,p}$,
we have
\begin{equation}
\label{eqn:eqnonminors}
I_k(\Derlog{X}_p)\subseteq\bigcap_{\substack{s\in S \\ k>\dim(X_s)}}
  (I(X_s))^{(k-\dim(X_s))},
\end{equation}
where the exponents denote symbolic powers and are
\emph{sharp}, that is, changing the RHS of \eqref{eqn:eqnonminors} by either
increasing the exponents or intersecting with any nontrivial symbolic
power of any $I(X_s)$ with $k\leq\dim(X_s)$ would make the statement
false.
Moreover, the difference between the two sides of
\eqref{eqn:eqnonminors} is supported on $\Sing(X)$.
\end{theorem}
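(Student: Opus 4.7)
The plan is to reduce \eqref{eqn:eqnonminors} to the smooth case via Proposition \ref{prop:smoothideals1}, then lift pointwise vanishing to symbolic power membership through Corollary \ref{cor:eisenbudhochster}. First I would use Lemma \ref{lemma:goodrep} to fix a polydisc $U \ni p$, a representative $X' = \bigcup_{s \in S} X'_s$ with each $X'_s$ irreducible in $U$, and generators of $\Derlog{X'}$ on $U$ whose Saito matrix $A \in \mathscr{O}_U(U)$ restricts at every $q \in U$ to a generating set of $\Derlog{X'}_q$. Each $k \times k$ minor $g$ of $A$ is then an element of $\mathscr{O}_U(U)$ whose germ at every $q \in U$ lies in $I_k(\Derlog{X'}_q)$.

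Now fix $s \in S$ with $k > d_s := \dim(X_s)$ and set $N_s = \smooth(X'_s) \setminus \bigcup_{t \neq s} X'_t$. Because each $X'_s$ is irreducible and meets every other $X'_t$ in a proper analytic subset, $N_s$ is Zariski-dense in $X'_s$. For $q \in N_s$, $X'$ equals $X'_s$ near $q$ and is smooth of dimension $d_s$ there, so Proposition \ref{prop:smoothideals1} gives $g_q \in (I(X'_s))^{k - d_s}_q \subseteq \mathscr{M}_q^{k - d_s}$. Applying Corollary \ref{cor:eisenbudhochster} to the prime ideal $I(X'_s) \subseteq \mathscr{O}_U(U)$ with the dense smooth subset $N_s$ gives $g \in (I(X'_s))^{(k - d_s)}$ in $\mathscr{O}_U(U)$, and hence the germ at $p$ lies in $(I(X_s))^{(k - d_s)} \subseteq \mathscr{O}_{\C^n, p}$. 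Taking intersections over all relevant $s$ and using that such germs generate $I_k(\Derlog{X}_p)$ yields the stated containment.

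I would then prove sharpness and the "moreover" clause by running the same localizations in reverse. If the exponent for some $s$ with $k > d_s$ were raised from $k - d_s$ to $k - d_s + 1$, then at $q \in N_s$ the tightened RHS would be contained in $(I(X'_s))^{k - d_s + 1}_q$ — since by Lemma \ref{lemma:ci} symbolic and ordinary powers coincide at the smooth point $q$ — contradicting Proposition \ref{prop:smoothideals1}, which supplies a minor whose germ generates $(I(X'_s))^{k - d_s}_q$. For a component $X_t$ with $k \leq d_t$, Proposition \ref{prop:smoothideals1} at $q \in N_t$ gives $I_k(\Derlog{X'}_q) = (1)$, so some minor is a unit at $q$, precluding it from lying in any nontrivial $(I(X'_t))^{(j)}$. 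Finally, at any $q \in \smooth(X') \cap X'_s$, all $I(X'_t)_q$ with $t \neq s$ are the unit ideal, Lemma \ref{lemma:ci} identifies symbolic with ordinary powers of $I(X'_s)_q$, and Proposition \ref{prop:smoothideals1} equates the two sides of \eqref{eqn:eqnonminors} at $q$; off $X'$, both sides are trivially $(1)$. The main difficulty I foresee is the bookkeeping between $I_k(\Derlog{X}_p) \subseteq \mathscr{O}_{\C^n,p}$ and its realization via minors in $\mathscr{O}_U(U)$, together with verifying that $N_s$ is Zariski-dense in $V(I(X'_s)) = X'_s$ so that Corollary \ref{cor:eisenbudhochster} applies cleanly.
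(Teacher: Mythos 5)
Your proposal is correct and follows essentially the same route as the paper's proof: good representatives on a polydisc via Lemma \ref{lemma:goodrep}, Proposition \ref{prop:smoothideals1} at the dense set $N_s$ of smooth points of each component, Corollary \ref{cor:eisenbudhochster} to pass to symbolic powers and then localization at $p$, with sharpness and the support statement obtained by localizing back at nearby smooth points via Lemma \ref{lemma:ci} and Proposition \ref{prop:smoothideals1}. The only details you leave implicit---promoting the germ-level membership $g_q\in\mathscr{M}_q^{k-d_s}$ to membership in the corresponding power of the maximal ideal of the global ring $\mathscr{O}_U(U)$, and spreading the hypothetical containment at $p$ to nearby points $q$ in the sharpness step---are exactly the bookkeeping you flag at the end, and the paper dispatches them with Hadamard's lemma on the polydisc and coherence of the relevant ideal sheaves.
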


For example, $I_n(\Derlog{X}_p)$ reflects both the
irreducible components of $(X,p)$ and the dimensions of these
components.

\begin{proof}
Let $\eta_1,\ldots,\eta_m$ generate $\Derlog{X}_p$.
Find representatives of $(X,p)$ and each $\eta_i$ on an open
neighborhood $U$ of $p$, as in Lemma \ref{lemma:goodrep}.
Let $M=\Derlog{X'}(U)$, and observe that it is generated by 
$\eta'_1,\ldots,\eta'_m$.

For $s\in S$, let $N_s=\smooth(X')\cap X'_s
=\smooth(X'_s)\setminus \left(\cup_{r\in S\setminus\{s\}}
X'_r\right)$.
Since $N_s$ is a dense open subset of $X'_s$,
$\overline{N_s}=X'_s$.

Let $q\in N_s$, and let $k>\dim(X_s)$.
Since $\eta'_1,\ldots,\eta'_m$ generate $\Derlog{X'}_q$ as a
$\mathscr{O}_{U,q}$ module, the localization of $M$ at $q$ is equal to
$\Derlog{X'}_q$.
Observe that localizing $M$ commutes with taking minors of a
presentation matrix of $M$.
Thus by Proposition \ref{prop:smoothideals1},
as $\mathscr{O}_{U,q}$ modules,
\begin{equation}
\label{eqn:Ikm}
\mathscr{O}_{U,q}\cdot I_k(M)
=I_k(\mathscr{O}_{U,q}\cdot M)
=I_k(\Derlog{X'}_q)
%=I(X'_s)^{k-\dim(X'_s,q)}
=I(X'_s)^{k-\dim(X_s)}.
\end{equation}
Since $q\in X_s$, by \eqref{eqn:Ikm},
$\mathscr{O}_{U,q}\cdot I_k(M)\subseteq \mathscr{M}_q^{k-\dim(X_s)}$.
By a version of Hadamard's lemma for holomorphic functions,
it follows that
in $\mathscr{O}_U$,
$I_k(M)\subseteq \mathscr{M}_q^{k-\dim(X_s)}$.
\begin{comment}
 This argument was written out on paper.  Hadamard's lemma gives
holomorphic coefficients.  We need that $U$ is of a special shape for
this to work.
\end{comment}
As this is true for all $q\in N_s$,
by Corollary \ref{cor:eisenbudhochster} it follows that
$I_k(M)\subseteq (I(X'_s))^{(k-\dim(X_s))}$ whenever $k>\dim(X_s)$.

This proves that as $\mathscr{O}_U(U)$ ideals,
\begin{equation}
\label{eqn:eqnonminorsalmost}
I_k(M)\subseteq\bigcap_{\substack{s\in S \\ k>\dim(X_s)}}
  (I(X'_s))^{(k-\dim(X_s))}.
\end{equation}
Since localization commutes with taking minors,
$M$ localized at $p$ is $\Derlog{X}_p$, 
$I(X'_s)$ localized at $p$ is $I(X_s)$,
and as symbolic powers commute with localization,
\eqref{eqn:eqnonminors} follows from \eqref{eqn:eqnonminorsalmost}.
By \eqref{eqn:Ikm} and Lemma \ref{lemma:ci}, the two sides of
\eqref{eqn:eqnonminors} agree at smooth points of $X$.

To show that the exponents are sharp, fix $s\in S$ and $k>\dim(X_s)$.
Suppose that in $\mathscr{O}_{\C^n,p}$,
\begin{equation}
\label{eqn:ikdbad}
I_k(\Derlog{X}_p)\subseteq (I(X_s))^{(k-\dim(X_s)+1)}.
\end{equation}
As above, choose representatives on an open set $U$ containing $p$.
Each side of \eqref{eqn:ikdbad} is the stalk at $p$ of the 
coherent ideal sheaf $\mathscr{J}$ and $\mathscr{K}$ on $U$ generated by
$I_k(M)$ and $(I(X'_s))^{(k-\dim(X_s)+1)}$, respectively.
It thus follows from \eqref{eqn:ikdbad} that there exists some open
$V\subseteq U$ containing $p$ on which
$\mathscr{J}|_V\subseteq \mathscr{K}|_V$.
Let $q\in V\cap \smooth(X')\cap X'_s$, take the stalks at $q$,
and apply Proposition \ref{prop:smoothideals1} and
Lemma \ref{lemma:ci}
to find that in $\mathscr{O}_{\C^n,q}$,
$$\mathscr{J}_q=(I(X'_s))^{k-\dim(X_s)}\subseteq
\mathscr{K}_q=(I(X'_s))^{k-\dim(X_s)+1};$$
this is a contradiction.
\begin{comment}
It follows by Nakayama's lemma that $I(X'_s)=0$
this is clearly impossible.
\end{comment}

If $s\in S$ and $k\leq \dim(X_s)$, then no symbolic power of $I(X_s)$ can
appear on the right side of \eqref{eqn:eqnonminors}, either by the
same argument, or by the existence of certain vector fields
constructed in \cite[Existence Lemma]{hausermuller}.
\end{proof}

\begin{remark}
\label{rem:improveminors}
Theorem \ref{thm:onminors} is improved significantly by the
following observation.
If $\Derlog{X}_p\subseteq \Derlog{Y}_p$ for some $(Y,p)$, then
$I_k(\Derlog{X}_p))\subseteq I_k(\Derlog{Y}_p)$, and hence applying
Theorem \ref{thm:onminors} to $(Y,p)$ can reduce the upper bound
for $I_k(\Derlog{X}_p)$.
In particular, this applies to $Y=\sing(X)$, $Y=\sing(\sing(X))$, etc.
\end{remark}

\begin{remark}
\label{rem:primarydecomp}
The RHS of \eqref{eqn:eqnonminors}
is part of the primary decomposition of the LHS,
containing those primary ideals corresponding to isolated primes.
\end{remark}

\begin{remark}
\label{rem:reduced}
Let $(X_s,p)$ be an irreducible component of $(X,p)$ and let
$d=\dim(X_s)$.
By Theorem \ref{thm:onminors},
$I_{d+1}(\Derlog{X}_p)$
is contained in $I(X_s)$, but not contained in
$I(X_s)^{(2)}$.
%Thus, the prime ideal $I(X_s)$ appears in the
%primary decomposition of this Fitting ideal.
%In some sense, then, this Fitting ideal is ``reduced'' with respect to
%the component $X_s$.
\end{remark}

%\begin{remark}
%NODO question
%Can Theorem \ref{thm:onminors} be strengthened to use ordinary powers
%of ideals?
%\end{remark}
%
%DONE: check these examples, and do a few more.
%
\begin{example}
Consider the hypersurface $X$ in $\C^4$ defined by
$xw-yz=0$, with 
$\sing(X)$ the origin.
Then $M=\Derlog{X}_0$ is generated by seven vector fields, and by
Theorem \ref{thm:onminors} and Remark \ref{rem:improveminors},
we know
\begin{align*}
I_4(M)&\subseteq (xw-yz)\cap (x,y,z,w)^4, \\
I_3(M)&\subseteq (x,y,z,w)^3, \\
I_2(M)&\subseteq (x,y,z,w)^2, \\
\text{and }I_1(M)&\subseteq (x,y,z,w);
\end{align*}
in fact, a Macaulay2 computation shows that these are all equalities.
\end{example}

\begin{comment}
Uninteresting example:
\begin{example}
Let $\C^4$ have coordinates $x,y,z,w$ and
consider the hypersurface $X_3$ defined by $xyz(x-wy)=0$.
Inductively let $X_i=\sing(X_{i+1})$, so that we get
$X_0\subseteq X_1\subseteq X_2\subseteq X_3$, with
each $X_i$ a union of irreducible
complete intersections of dimension $i$.
$M=\Derlog{X}_0$ is generated by 4 vector fields, and
Theorem \ref{thm:onminors}
correctly computes all $I_k(M)$.
\end{example}
\end{comment}
%
\begin{example}
Consider the hypersurface $X_3$ in $\C^4$ defined by $xy(x-y)(xz-yw)=0$.
%a variant of the classic ``4 lines'' example.
Inductively let $X_i=\sing(X_{i+1})$, so that
$X_0\subseteq X_1\subseteq X_2\subseteq X_3$, with
each $X_i$ a union of irreducible
complete intersections of dimension $i$.
Then $M=\Derlog{X_3}_0$ is generated by 4 vector fields, and
applying
Theorem \ref{thm:onminors} to all $X_i$
computes $I_4(M)$ and $I_1(M)$ exactly, while
an upper bound with the correct radical is produced for $I_3(M)$.
For $I_2(M)$, Theorem \ref{thm:onminors}
does not
detect that
the logarithmic stratification of $X_3$ is not 
finite, as the vector fields of $M$ have rank $\leq 1$ on
$x=y=0$.
This corresponds to $X_3$ not being biholomorphically
trivial along $x=y=0$.
%
\begin{comment}
\begin{verbatim}
needsPackage "VectorFields";
R=QQ[x,y,z,w,MonomialOrder=>RevLex,Global=>false];
X3=ideal (x*y*(x-y)*(x*z-w*y));
if (0==1) then (
  X2=radical ideal singularLocus(Spec(R/X3));
  X1=radical ideal singularLocus(Spec(R/X2));
  -- This is wrong: X0==X1?
  X0=radical ideal singularLocus(Spec(R/X1));
) else (
  X2=ideal(y*w-x*z,x*y^2-x^2*y,x*y*z-x^2*z,x*z*w-x*z^2);
  X1=ideal(x,y,z*w^2-z^2*w);
  X0=ideal(x,y,z,w);
)

pX3=primaryDecomposition X3;
pX2=primaryDecomposition X2;
pX1=primaryDecomposition X1;
pX0=primaryDecomposition X0;

I4=ideal (1_R);
I3=I4;
I2=I4;
I1=I4;

-- First handle X3.
I4=intersect({I4}|pX3);
-- X2
I4=intersect({I4}|(apply(pX2,x->x^2)));
I3=intersect({I3}|pX2);
-- X1
I4=intersect({I4}|(apply(pX1,x->x^3)));
I3=intersect({I3}|(apply(pX1,x->x^2)));
I2=intersect({I2}|pX1);
-- X0
I4=intersect({I4}|(apply(pX0,x->x^4)));
I3=intersect({I3}|(apply(pX0,x->x^3)));
I2=intersect({I2}|(apply(pX0,x->x^2)));
I1=intersect({I1}|pX0);

M=derlogV(X3);
minors(4,M)==I4 --true
minors(3,M)==I3 --false
minors(2,M)==I2 --false
minors(1,M)==I1 --true
radical minors(4,M)==radical I4 --true
radical minors(3,M)==radical I3 --true
radical minors(2,M)==radical I2 --false
radical minors(1,M)==radical I1 --true
\end{verbatim}
\end{comment}
\end{example}

%\begin{example}
%Consider the variety $X$ of $3\times 3$ matrices of rank $\leq 1$.
%$X$ has dimension $5$ and is not a complete intersection.
%NODO
%The minors here do not seem computable!
%\end{example}
%
\begin{example}
Consider the Whitney umbrella $X_2$ in $\C^3$ defined by $x^2-y^2z=0$.
Then $X_1=\sing(X_2)$ is the smooth set $x=y=0$ and
$M=\Derlog{X_2}_0$ is generated by 4 vector fields.
Applying
Theorem \ref{thm:onminors} to $X_1$ and $X_2$
does not compute any $I_k(M)$ exactly.
Although the bounds for $I_2(M)$ and $I_3(M)$ have the correct
radical,
the upper bound for $I_1(M)$ is $(1)$.
\begin{comment}
\begin{verbatim}
R=QQ[x,y,z,MonomialOrder=>RevLex,Global=>false]
TODO
\end{verbatim}
\end{comment}
If we recognize that each $\eta\in M$ must be tangent
to the origin, then Theorem \ref{thm:onminors} and Remark
\ref{rem:improveminors} compute
$I_1(M)$ and $I_3(M)$
exactly.
\end{example}

\begin{example}
Consider the variety $X$ defined by the ideal $I$ generated by
the $2\times 2$ minors of
a generic $3\times 3$ symmetric matrix.
Then $X$ has dimension $3$, and $\sing(X)$ is the origin.
Here, the symbolic powers of $I$ differ from the 
usual powers of $I$.
The module $M=\Derlog{X}_0$ is generated by $24$ vector fields.
Theorem \ref{thm:onminors} and Remark \ref{rem:improveminors}
compute $I_1(M),\ldots,I_5(M)$ exactly, while
$I_6(M)$ differs from the computed upper bound.
% ($=I^3$)
\end{example}

The sharpness described in
Theorem \ref{thm:onminors} provides a necessary condition on a
submodule of logarithmic vector fields to be a complete generating
set.
However, it is far from sufficient.

\begin{example}
\label{ex:dumbvfs}
Let $f\in\mathscr{O}_{\C^n,p}$ define a reduced hypersurface $(X,p)$.
%be reduced and nonzero, with $f(p)=0$.
Then the vector fields
\begin{equation}
\left\{
\frac{\partial f}{\partial x_j}\frac{\partial}{\partial x_i}
-
\frac{\partial f}{\partial x_i}\frac{\partial}{\partial x_j}
\right\}_{1\leq i<j\leq n}
\qquad
\mathrm{and}
\qquad
\left\{f\frac{\partial}{\partial x_i}\right\}_{1\leq i\leq n}
\end{equation}
generate a module $L\subseteq \Derlog{X}_p$ of vector fields which
vanish on $\sing(X)$; nevertheless,
at any nearby smooth point $q$ of $X$,
$L=\Derlog{X}_q$ as $\mathscr{O}_{\C^n,q}$--modules (by, e.g.,
Theorem \ref{thm:smoothgencriteria}).
Since the proof of Theorem \ref{thm:onminors} relied entirely on
behavior at
the
smooth points, each $I_k(L)$ should satisfy
\eqref{eqn:eqnonminors} and the sharpness conditions of Theorem
\ref{thm:onminors}, although in general
$L\neq\Derlog{X}_p$.
%
\begin{comment}
Note that if $(V(f),p)$ is weighted homogeneous, then the first kind
of vfs and the Euler vector field are enough to generate Derlog{X}.
Source:
(where I found it) Yau, Stephen S.-T., MR0832806, Prop 2.3 (Solvable
  Lie algebras and generalized Cartan Matrices arising from isolated
  singularities)
(original source, not obtained): J.  Wahl; Proc.
  Symp. Pure Math. AMS, Vol. 40, 2, p. 615. (Seems to be difficult
  to obtain electronically.  Have endmatter, pspum040.2-endmatter.pdf.
  Name is "Derivations, automorphisms and deformations of
  quasihomogeneous singularities".  MR 713285, saved PDF of review.)
(or, for something similar that applies to an ICIS defined by weighted
  homogeneous polynomials, see \cite{hausermuller}, Aleksandrov's and
  Kersken's theorem)
\end{comment}
%
\end{example}

\subsection{Geometry of the Fitting ideals}
Let $L\subseteq \Derlog{X}_p$ be a submodule.
We can give a geometric interpretation to a
certain Fitting ideal of $L$
satisfying the sharpness condition of
Theorem \ref{thm:onminors} with respect to a certain component of
$(X,p)$.
%
%We can give a geometric interpretation of certain Fitting ideals
%of $L$ being `reduced' in the sense of Remark \ref{rem:reduced}.

\begin{proposition}
\label{prop:componentreduced}
Let $(X,p)$ be an analytic germ in $\C^n$, and let $(X_0,p)$ be an
irreducible component of $(X,p)$ of dimension $d$.
Let $L=\mathscr{O}_{\C^n,p}\{\eta_1,\ldots,\eta_m\}\subseteq
\Derlog{X}_p$.
Choose representatives of $(X,p)$, $(X_0,p)$, and each $\eta_i$,
and let $U$ be an open neighborhood of $p$ 
on which
these representatives satisfy the conditions of Lemma
\ref{lemma:goodrep}.
Let $L'=\mathscr{O}_U\{\eta'_1,\ldots,\eta'_m\}$.
Then the following are equivalent:
\begin{enumerate}
 \item
\label{en:componentreduceda}
$I_{d+1}(L)\nsubseteq (I(X_0))^{(2)}$
in $\mathscr{O}_{\C^n,p}$;
 \item
\label{en:componentreducedb}
for every open neighborhood $V\subseteq U$ containing $p$, there
exists a $q\in \smooth(X')\cap X'_0\cap V$ such that at $q$, $L'$ and
$(X',q)=(X'_0,q)$ satisfy the equivalent conditions of
Proposition \ref{prop:smoothideals2};
 \item
\label{en:componentreducedc}
for every open neighborhood $V\subseteq U$ containing $p$, there
exists a Zariski open, dense subset $W$ of $X'_0\cap V$ such that at every
$q\in W$, $L'$ and 
$(X',q)=(X'_0,q)$ satisfy the equivalent conditions of
Proposition \ref{prop:smoothideals2}.
\end{enumerate}
\end{proposition}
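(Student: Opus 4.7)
The plan is to use Corollary~\ref{cor:eisenbudhochster} to convert the algebraic condition \eqref{en:componentreduceda}---membership in the symbolic square $(I(X_0))^{(2)}$---into the pointwise condition $f\in\mathscr{M}_q^2$ at smooth points $q$ of the component $X'_0$. At such a $q$, $(X',q)=(X'_0,q)$ is a smooth germ of dimension~$d$, so Proposition~\ref{prop:smoothideals2} translates this pointwise condition directly into the geometric conditions appearing in \eqref{en:componentreducedb} and \eqref{en:componentreducedc}. The implication \eqref{en:componentreducedc}$\Rightarrow$\eqref{en:componentreducedb} is immediate.

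For \eqref{en:componentreducedb}$\Rightarrow$\eqref{en:componentreduceda} I would argue the contrapositive. Assuming $I_{d+1}(L)\subseteq (I(X_0))^{(2)}$ in $\mathscr{O}_{\C^n,p}$, I shrink $U$ so that the inclusion of (representatives of) generators holds at the level of sections on all of~$U$. Corollary~\ref{cor:eisenbudhochster} applied to $\p=I(X'_0)$ gives $(I(X'_0))^{(2)}\subseteq \mathscr{M}_q^2$ for every $q\in X'_0\cap U$, so $I_{d+1}(L')_q\subseteq \mathscr{M}_q^2$ at each such~$q$. Specializing to $q\in \smooth(X')\cap X'_0\cap U$, Proposition~\ref{prop:smoothideals2}\eqref{en:sm2a} fails, and so do the equivalent conditions; hence \eqref{en:componentreducedb} fails with $V=U$.

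For \eqref{en:componentreduceda}$\Rightarrow$\eqref{en:componentreducedc}, I would pick a germ $f\in I_{d+1}(L)\setminus (I(X_0))^{(2)}$ and take a representative $f'$ on~$U$. Given any open $V\subseteq U$ containing $p$, I would invoke Corollary~\ref{cor:eisenbudhochster} on~$V$ to deduce that $f'\notin \mathscr{M}_q^2$ for at least one $q\in X'_0\cap V$: otherwise $f'|_V$ would lie in $(I(X'_0\cap V))^{(2)}$, forcing the germ $f$ into $(I(X_0))^{(2)}$ and contradicting the choice of~$f$. The set
\[
W=\{q\in \smooth(X')\cap X'_0\cap V : f'\notin \mathscr{M}_q^2\}
\]
is Zariski open in $X'_0\cap V$ (as the complement of the ``order $\ge 2$ vanishing'' locus of $f'$ together with $\sing(X')\cap X'_0$ and the trace of the other irreducible components), nonempty by the above, and dense by the irreducibility of~$X'_0$ (Lemma~\ref{lemma:goodrep}\eqref{cond:irrdecomp}). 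At each $q\in W$, $(X',q)=(X'_0,q)$ is smooth of dimension~$d$ and $f'$ witnesses $I_{d+1}(L')_q\nsubseteq \mathscr{M}_q^2$, so Proposition~\ref{prop:smoothideals2}\eqref{en:sm2a} holds and supplies the required geometric conditions.

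The main obstacle I anticipate is the bookkeeping between ideals in the local ring $\mathscr{O}_{\C^n,p}$ and their sheaf or stalk counterparts on~$U$: one must repeatedly use that Fitting ideals and symbolic powers both commute with localization (the latter is noted after Corollary~\ref{cor:eisenbudhochster}) in order to shuttle cleanly between the germ-level formulation of \eqref{en:componentreduceda} and the pointwise hypotheses of Proposition~\ref{prop:smoothideals2} at varying smooth points of~$X'_0$.
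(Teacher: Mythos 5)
Your proposal is correct and takes essentially the same route as the paper's proof: both translate membership of $I_{d+1}$ in $(I(X_0))^{(2)}$ into order-two vanishing at points of $\smooth(X')\cap X'_0$ via Corollary \ref{cor:eisenbudhochster}, invoke Proposition \ref{prop:smoothideals2} at those smooth points, and obtain density from Zariski-openness of the good locus together with the irreducibility of $X'_0$; the paper merely organizes this as a dichotomy ($G=\emptyset$ versus $\overline{G}=X'_0$) for a single Zariski-open subset $G\subseteq X'_0$ instead of your cycle of implications. One small point to tidy: so that $f'$ is genuinely defined on all of $U$ (making $W$ meaningful for every $V\subseteq U$), take $f$ to be one of the generating $(d+1)\times(d+1)$ minors not lying in $(I(X_0))^{(2)}$, or replace the single $f'$ by the ideal generated by $I_{d+1}(L')$, as the paper does with its set $A$.
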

\begin{proof}
%First note that since $X'_0$ is irreducible, $I(X'_0)$ is prime and
%$X'_0$ has pure dimension $d$.
The set $A=\{q\in U: \mathscr{O}_{U,q}\cdot I_{d+1}(L')\subseteq
\mathscr{M}_q^2$ in $\mathscr{O}_{U,q}\}$
is defined by the vanishing of $f$ and the partials of $f$, for all
$f\in I_{d+1}(L')$, and hence is a closed analytic subset of $U$.
Since $\sing(X')$ and $X_0'$ are closed analytic sets,
$B=(A\cup \sing(X'))\cap X_0'$ is a closed analytic subset of $X_0'$.
Thus $G=X'_0\setminus B$ is a Zariski open subset of $X'_0$,
and is the set of $q\in N=\smooth(X')\cap X'_0$ where $L'$
and $(X',q)=(X'_0,q)$ satisfy one of the equivalent conditions of Proposition
\ref{prop:smoothideals2}.
Note that $G=N\setminus A$.

Since $G$ is a Zariski open subset of the irreducible $X'_0$,
either $\overline{G}=X'_0$ (and $B$ is nowhere dense in $X'_0$)
or $G=\emptyset$ (and $B=X'_0$).
% see Chirka, "Complex Analytic Sets", Cor. 1, p.55

If $G=\emptyset$, then at every $q\in N$,
$q\in A$ and
% by Hadamard's lemma again
hence
(by, e.g., Hadamard's Lemma)
$I_{d+1}(L')\subseteq \mathscr{M}_q^2$ in $\mathscr{O}_U(U)$.
Since $\overline{N}=X'_0$, 
Corollary \ref{cor:eisenbudhochster} shows that
$I_{d+1}(L')\subseteq (I(X_0))^{(2)}$, and localizing at $p$ shows
that \eqref{en:componentreduceda} is false.
However, \eqref{en:componentreducedb} and \eqref{en:componentreducedc} are also false.

If $\overline{G}=X_0'$, then $G$ is dense in $N$, and
\eqref{en:componentreducedb} and
\eqref{en:componentreducedc} are true.
Suppose that \eqref{en:componentreduceda} were false.
Then on some open $V\subseteq U$ containing $p$,
$\mathscr{O}_{U}(V)\cdot I_{d+1}(L')\subseteq (I(X'_0))^{(2)}$ in $\mathscr{O}_U(V)$.
Let $q\in G\cap V$.
By Corollary \ref{cor:eisenbudhochster}, 
$I_{d+1}(L')\subseteq \mathscr{M}_q^2$ in $\mathscr{O}_{U,q}$,
and hence $q\in A$.
But since $A\cap G=\emptyset$, this is a contradiction.
\end{proof}

\section{Reflexive modules}
\label{sec:duals}

Before discussing hypersurfaces in \S\ref{sec:hypersurfaces}, we
recall 
some useful background on reflexive modules.
\newcommand{\holovecs}{\mathrm{Der}}
\newcommand{\holofuncs}{\mathscr{O}}
\newcommand{\holoforms}{\Omega^1}
\newcommand{\merofuncs}{\widetilde{\mathscr{O}}}
\newcommand{\meroforms}{\widetilde{\Omega}^1}
\newcommand{\sheafD}{\mathscr{D}}
\newcommand{\sheafE}{\mathscr{E}}
\newcommand{\sheafF}{\mathscr{F}}
\newcommand{\sheafG}{\mathscr{G}}
\newcommand{\sheafL}{\mathscr{L}}
\newcommand{\sheafM}{\mathscr{M}}
\newcommand{\sheafN}{\mathscr{N}}
\newcommand{\sheafhom}{\mathscr{H}\mathrm{om}}

We adopt the following notation for this section.
Let $U$ be an open subset of $\C^n$.
Let $\holofuncs=\mathscr{O}_{\C^n}|_{U}$ (respectively, $\merofuncs$)
be the sheaf of holomorphic functions (resp., meromorphic functions) on $U$.
Let 
$\holoforms=\Omega^1_{U}$
(respectively, $\meroforms=\widetilde{\Omega}^1_{U}$)
be the $\holofuncs$--module
of holomorphic (resp., meromorphic) $1$-forms on $U$,
and let $\holovecs=\Der_U$ be the
$\holofuncs$--module of holomorphic vector fields on $U$.
For a $\holofuncs$--module $\sheafN$,
denote its $\holofuncs$--dual
by $\sheafN^*=\sheafhom_{\holofuncs}(\sheafN,\holofuncs)$;
for an open $V\subseteq U$,
$\sheafN^*(V)=\Hom_{\holofuncs|_V}(\sheafN|_V,\holofuncs|_V)$ is the
$\holofuncs(V)$--module of $\holofuncs|_V$--module morphisms
$\sheafN|_V\to\holofuncs|_V$.

Let $\theta:\holovecs\times \meroforms\to \merofuncs$,
defined by
$\theta(V)(\eta,\omega)=(x\mapsto \omega(\eta(x)))\in\merofuncs(V)$, be
the standard pairing (or ``inner product'') between vector fields and
$1$-forms,
extended to meromorphic forms. 
% sometimes called an ``inner product''
Just as for pairings between vector spaces, such a
pairing may sometimes be used to identify the $\holofuncs$--dual of  
a submodule $\sheafN\subseteq \holovecs$
with a submodule of $\meroforms$, and vice-versa.
Following \cite[(1.6)]{Sa}, we have
\begin{lemma}
\label{lemma:duals}
Let $f\in\holofuncs(U)$, $f\neq 0$.
\begin{enumerate}
\item
\label{en:D}
Let $\sheafD$ be a $\holofuncs$--submodule of $\holovecs$, with
$f\cdot \holovecs\subseteq \sheafD\subseteq \holovecs$.
Then $\sheafD^*$ is canonically isomorphic to a
$\holofuncs$--submodule $\sheafM\subseteq \meroforms$,
with
$\holoforms\subseteq \sheafM \subseteq \frac{1}{f}\cdot \holoforms$,
by the pairing $\theta|_{\sheafD\times \sheafM}:\sheafD\times \sheafM\to \holofuncs$.
\item
\label{en:M}
Let $\sheafM$ be a $\holofuncs$--submodule of $\meroforms$, with
$\holoforms\subseteq \sheafM\subseteq \frac{1}{f}\cdot \holoforms$.
Then $\sheafM^*$ is canonically isomorphic to a
$\holofuncs$--submodule $\sheafD\subseteq \holovecs$, with
$f\cdot \holovecs\subseteq \sheafD\subseteq \holovecs$,
by the pairing $\theta|_{\sheafD\times \sheafM}:\sheafD\times \sheafM\to \holofuncs$.
\end{enumerate}
The modules and maps are independent of $f$.
\end{lemma}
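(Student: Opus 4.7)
The plan is to reduce both parts to the fundamental duality $\holovecs^* \cong \holoforms$ (and symmetrically $\holoforms^* \cong \holovecs$) implemented by $\theta$, which holds because $\holovecs$ and $\holoforms$ are locally free $\holofuncs$-modules of rank $n$. This classical identification also extends to the meromorphic level: $\sheafhom_{\holofuncs}(\holovecs,\merofuncs)\cong\meroforms$ via $\theta$, which is the technical fact that will let me pass between dual maps and (possibly meromorphic) $1$-forms.

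For part \eqref{en:D}, I would start with $\phi\in\sheafD^*(V)$ on a sufficiently small open $V\subseteq U$ and use the hypothesis $f\cdot\holovecs\subseteq\sheafD$ to extend $\phi$ to a $\merofuncs$-valued map $\widetilde{\phi}:\holovecs\to\merofuncs$ by $\widetilde{\phi}(\eta)=\frac{1}{f}\phi(f\eta)$; this is well-defined since $f\eta\in\sheafD$, is manifestly $\holofuncs$-linear, and agrees with $\phi$ on $\sheafD$. Via the meromorphic identification above, $\widetilde{\phi}$ corresponds to a unique $\omega_\phi\in\meroforms$ satisfying $\theta(\eta,\omega_\phi)=\phi(\eta)$ for all $\eta\in\sheafD$. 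The assignment $\phi\mapsto\omega_\phi$ is injective because $\sheafD$ generates $\holovecs$ over $\merofuncs$; I would define $\sheafM$ to be its image, so that $\theta|_{\sheafD\times\sheafM}$ realizes $\sheafM\cong\sheafD^*$ by construction.

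Next I would verify the sandwich $\holoforms\subseteq\sheafM\subseteq\frac{1}{f}\cdot\holoforms$. The lower containment is immediate, since any $\omega\in\holoforms$ restricts to a $\holofuncs$-linear map $\sheafD\to\holofuncs$. For the upper containment, given $\omega_\phi\in\sheafM$, for every $\eta\in\holovecs$ one has $(f\omega_\phi)(\eta)=\omega_\phi(f\eta)=\phi(f\eta)\in\holofuncs$; thus $f\omega_\phi$ pairs holomorphically against all of $\holovecs$, so $f\omega_\phi\in\holovecs^*=\holoforms$, i.e. $\omega_\phi\in\frac{1}{f}\cdot\holoforms$. Independence of $f$ follows because $\sheafM$ is intrinsically characterized as $\{\omega\in\meroforms:\theta(\sheafD,\omega)\subseteq\holofuncs\}$; only the existence (not the choice) of $f$ is used, to sandwich $\sheafM$ inside $\frac{1}{f}\cdot\holoforms$.

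Part \eqref{en:M} is entirely symmetric: swap the roles of $\holovecs$ and $\holoforms$, use $\holoforms^*\cong\holovecs$, and repeat the argument with $\sheafM$ in place of $\sheafD$. The argument is formal, so there is no real obstacle; the only care needed is the sheaf-theoretic bookkeeping, but since $\holovecs$ and $\holoforms$ are locally free the entire construction is performed section-wise on any open where the generators are defined, and it commutes with restriction, producing a morphism of sheaves.
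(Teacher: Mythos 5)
Your proposal is correct and follows essentially the same route as the paper: your extension $\widetilde{\phi}(\eta)=\frac{1}{f}\phi(f\eta)$ together with the identification $\sheafhom_{\holofuncs}(\holovecs,\merofuncs)\cong\meroforms$ produces exactly the paper's explicit map $\varphi\mapsto\frac{1}{f}\sum_i\varphi(f\frac{\partial}{\partial x_i})\,dx_i$, with the same sandwich verification and the same intrinsic characterization $\sheafM=\{\omega:\theta(\sheafD,\omega)\subseteq\holofuncs\}$ giving independence of $f$. The only cosmetic difference is that the paper defines $\sheafM$ by that characterization up front and writes the two mutually inverse maps explicitly, while you define $\sheafM$ as the image of your map and check it coincides with that set.
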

\begin{proof}
%As $U$ is an open subset of $\C^n$,
Fix holomorphic coordinates $x_1,\ldots,x_n$ on $U$.
Throughout, $V$ and $W$ will denote arbitrary open subsets with $W\subseteq
V\subseteq U$.

For \eqref{en:D}, define the $\holofuncs$--submodule $\sheafM\subseteq
\meroforms$ by 
$\sheafM(V)=\{\omega\in \meroforms(V): \theta(V)(\sheafD(V),\omega)\subseteq
\holofuncs(V)\}$,
and define the maps
\begin{align*}
\phantom{\text{and}}\quad\sheafD^*\to \sheafM\qquad\qquad
  &\text{on $V$,\quad}\varphi\mapsto
  \frac{1}{f}\sum_{i=1}^n \varphi(V)\left(f\frac{\partial}{\partial
x_i}\right) dx_i\in \sheafM(V),
\\
\text{and}\quad
\sheafM\to \sheafD^*\qquad\quad
   &\text{on $V$,\quad} \omega\mapsto
\left( W\mapsto \left(\eta\mapsto \theta(W)(\eta,\omega)\right)\right)
\in
%\Hom_{\holofuncs|_V}(\sheafD|_V,\holofuncs|_V).
\sheafD^*(V).
\end{align*}
Check that the images lie in
the claimed spaces,
that the maps are morphisms of $\holofuncs$--modules,
and that
composition in either order gives the identity.
By definition and by the surjectivity of the first map,
$\holoforms\subseteq \sheafM\subseteq \frac{1}{f}\holoforms$.
Since the second map is independent of $f$ and is the inverse of the
first, both maps are independent of $f$.

For \eqref{en:M}, define the
$\holofuncs$--submodule $\sheafD\subseteq \holovecs$ by  
$\sheafD(V)=\{\eta\in \holovecs(V):
\theta(V)(\eta,\sheafM(V))\subseteq \holofuncs(V)\}$,
and define the maps
\begin{align*}
\sheafM^*\to \sheafD\qquad\qquad &
\text{on $V$,\quad}
\varphi\mapsto \sum_{i=1}^n \varphi(V)(dx_i)\frac{\partial}{\partial x_i}
\in \sheafD(V),\\
\text{and}\quad
\sheafD\to \sheafM^*\qquad\qquad &
\text{on $V$,\quad}
\eta\mapsto 
\left(W\mapsto 
 \left(\omega\mapsto \theta(W)(\eta,\omega)\right)
\right)\in
%\Hom_{\holofuncs|_V}(\sheafM|_V,\holovecs|_V).
\sheafM^*(V).
\end{align*}
Check the same conditions as for \eqref{en:D}.
By definition,
$f\cdot \holovecs\subseteq \sheafD\subseteq \holovecs$.
\end{proof}

\begin{remark}
The hypothesis that $f\cdot \holovecs\subseteq \sheafD$ for some
nonzero $f\in\holofuncs(U)$
is equivalent to
$I_n(\sheafD(U))\neq (0)$.
\begin{comment}
If $L\subseteq \sheafD(U)$ is free of rank $n$ with $I_n(L)=(f)$, then
$f\cdot \holovecs(U)\subseteq L\subseteq \sheafD(U)$:
the adjugate of the Saito matrix of $L$ gives the coefficients
for the first containment, and the second is obvious.
Thus, 
%$f\cdot \holovecs(U)\subseteq L\subseteq \sheafD(U)$, and thus
$f\cdot \holovecs\subseteq \sheafD$.
The other implication is clear.
\end{comment}
\end{remark}

\begin{definition}
For a $\holofuncs$--module $\sheafN$ of vector fields (respectively, $1$-forms) as in Lemma \ref{lemma:duals},
call the module constructed in \eqref{en:D} (resp., \eqref{en:M})
the \emph{realization} of $\sheafN^*$ as a module of $1$-forms (resp.,
vector fields) and denote it by $R(N)$.
\end{definition}

\begin{example}
Let $X$ be the origin in $\C^2$.
If $\sheafM=\Der_{\C^2}(-\log X)$, then
applying 
Lemma \ref{lemma:duals}\eqref{en:D}
gives $\sheafM^*\cong R(\sheafM)=\Omega^1_{\C^2}$, and
\eqref{en:M} gives $\sheafM^{**}\cong R(R(\sheafM))=\Der_{\C^2}$.
\end{example}

\begin{remark}
\label{rem:stalkwise}
For a coherent $\holofuncs$--submodule $\sheafN$ of $\holovecs$ or
$\meroforms$ as in Lemma \ref{lemma:duals},
and $p\in U$,
$R(\sheafN)_p$ depends only on $\sheafN_p$.
For, there is a canonical isomorphism
$(\sheafN^*)_p\cong \Hom_{\holofuncs_p}(\sheafN_p,\holofuncs_p)$
(\cite[A.4.4]{g-r}), and
by construction $R(\sheafN)_p$ depends only on $(\sheafN^*)_p$.
Since $\sheafN^*$ is coherent,
%(ibid.),
it is enough to understand the realization operation at
stalks. 
\end{remark}

\begin{example}
\label{ex:logarithmicforms}
For a hypersurface $X$ in $U\subseteq \C^n$ defined by
a reduced 
$f\in\holofuncs(U)$,
the sheaf of \emph{logarithmic $1$-forms}
$\Omega_U^1(\log X)$
consists of the meromorphic $1$-forms
$\omega$ such that $f\cdot \omega$ and $f\cdot d\omega$ are
holomorphic (\cite[(1.1)]{Sa}).
% NODO: being slightly wrong here...  Nah, just vague
In \cite[(1.6)]{Sa}, Saito shows that at each $p\in U$,
$\Omega_{U,p}^1(\log X)$ and $\Derlog{X}_p$ are each the 
$\holofuncs_{U,p}$--dual of the other by the pairing $\theta_p$.
By 
%Remark \ref{rem:stalkwise} and the coherence of these sheaves,
coherence,
$R(\Derlog{X})=\Omega_U^1(\log X)$ and
$R(\Omega_U^1(\log X))=\Derlog{X}$.
%
%A direct proof of this relies on the fact that
%only need certain vfs...NODO
\end{example}

Recall that
for a $\holofuncs$--module $\sheafN$,
there is a natural morphism $\sheafN\to \sheafN^{**}$,
and that $\sheafN$ is \emph{reflexive}
when this morphism is an isomorphism%
\footnote{For $\sheafN\subseteq \holovecs$ and
$\sheafN\subseteq \frac{1}{f} \holoforms$,
this map is automatically injective because
$\holovecs$ and $\frac{1}{f}\holoforms$ (and thus $\sheafN$)
are torsion-free.}.
If $\sheafD\subseteq \holovecs$ and Lemma \ref{lemma:duals}
is used to construct
the module $R(R(\sheafD))\subseteq \holovecs$ and
an isomorphism $\sheafD^{**}\to R(R(\sheafD))$,
then composition 
with the natural map $\sheafD\to \sheafD^{**}$ gives an
interesting 
homomorphism between two submodules of $\holovecs$.

\begin{corollary}
\label{cor:dgetsbigger}
Let $f\in\holofuncs(U)$ with $f\neq 0$,
let $\sheafD\subseteq \holovecs$ be a $\holofuncs$--submodule
containing $f\cdot \holovecs$,
and let
$i:\sheafD\to \sheafD^{**}$ be the canonical map.
If the identifications in the proof of Lemma
\ref{lemma:duals} are used to construct
an isomorphism
$j:\sheafD^{**}\to R(R(\sheafD))\subseteq \holovecs$,
then
$j\circ i$ is the inclusion map.
In particular,
$\sheafD\subseteq R(R(\sheafD))$, and $\sheafD$ is reflexive if and
only if 
$\sheafD=R(R(\sheafD))$.
\end{corollary}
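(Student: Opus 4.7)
The proof is essentially a diagram chase that unwinds the explicit formulas for the two realization isomorphisms constructed in Lemma \ref{lemma:duals}. The plan is to fix holomorphic coordinates $x_1,\ldots,x_n$ on $U$ (matching those used in the proof of the lemma), apply part \eqref{en:D} to $\sheafD$, then apply part \eqref{en:M} to $\sheafM = R(\sheafD)$, and trace an arbitrary local section $\eta \in \sheafD(V)$ through $j \circ i$.

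The first application yields an isomorphism $\alpha : \sheafD^* \to R(\sheafD)$ characterized by the pairing: $\theta(\eta,\alpha(\varphi)) = \varphi(\eta)$ for all $\eta \in \sheafD$, so $\alpha^{-1}(\omega)$ is the functional $\eta \mapsto \theta(\eta,\omega)$. The second yields $\beta : R(\sheafD)^* \to R(R(\sheafD))$ sending $\psi \mapsto \sum_{i=1}^n \psi(dx_i)\tfrac{\partial}{\partial x_i}$. The isomorphism $j$ is then identified with $\beta \circ (\alpha^*)^{-1}$. Now $i(\eta)$ is evaluation at $\eta$, so $(\alpha^*)^{-1}(i(\eta))$ is the functional on $R(\sheafD)$ given by $\omega \mapsto \theta(\eta,\omega)$. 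Because $\holoforms \subseteq R(\sheafD)$, we may legitimately evaluate this functional at each $dx_i$, and
$$j(i(\eta)) = \sum_{i=1}^n \theta(\eta,dx_i)\tfrac{\partial}{\partial x_i} = \eta,$$
the last equality because $\theta(\eta,dx_i)$ is the $i$-th component of $\eta$ in these coordinates. This establishes that $j \circ i$ is the inclusion.

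For the ``in particular'' statements: since $j \circ i$ equals the inclusion of $\sheafD$ into $R(R(\sheafD)) \subseteq \holovecs$ and $j$ is an isomorphism, we obtain both $\sheafD \subseteq R(R(\sheafD))$ and the equivalence of the reflexivity of $\sheafD$ with the equality $\sheafD = R(R(\sheafD))$. There is no serious obstacle here: the entire content is bookkeeping with the two realization maps, and once their explicit coordinate expressions are in hand the computation collapses to the identity $\sum_i \theta(\eta,dx_i)\tfrac{\partial}{\partial x_i} = \eta$. The only point requiring a moment of care is the correct identification of $(\alpha^*)^{-1}$, i.e.\ keeping the two layers of duality straight.
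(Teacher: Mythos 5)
Your proposal is correct and follows essentially the same route as the paper: you unwind the two realization maps from Lemma \ref{lemma:duals} in coordinates and trace $\eta$ through to the identity $\sum_i \theta(\eta,dx_i)\frac{\partial}{\partial x_i}=\eta$, with your middle map $(\alpha^*)^{-1}$ (precomposition with $\alpha^{-1}$) coinciding with the paper's $\kappa$ (precomposition with $\rho$). The concluding equivalence of reflexivity with $\sheafD=R(R(\sheafD))$ is handled just as in the paper, using that $j$ is an isomorphism and $j\circ i$ is the (injective) inclusion.
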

\begin{proof}
By the proof of Lemma \ref{lemma:duals}\eqref{en:D}, we have 
a $\holofuncs$--module
$\sheafM=R(\sheafD)\subseteq \meroforms$ and an isomorphism
$\rho:\sheafM\to \sheafD^*$.
By the proof of Lemma \ref{lemma:duals}\eqref{en:M}, 
we have a module $R(\sheafM)\subseteq \holovecs$
and an isomorphism $\sigma:\sheafM^*\to R(\sheafM)=R(R(\sheafD))$.
Let
$\kappa:\sheafD^{**}\to \sheafM^*$ be defined by
$\kappa(V)(\varphi)=(W\mapsto \varphi(W)\circ \rho(W))\in\sheafM^*(V)$
for open $W\subseteq V\subseteq U$.
Then $j:\sheafD^{**}\to R(R(\sheafD))$ in the statement is
defined by $j=\sigma\circ \kappa$.

For open $W\subseteq V\subseteq U$ and $\eta\in \sheafD(V)$,
$(\kappa\circ i)(V)(\eta)\in
\Hom_{\holofuncs|_V}(\sheafM|_V,\holofuncs|_V)$
is defined by
$W\mapsto (\omega\mapsto \theta(W)(\eta,\omega))$.
Writing $\eta$ in coordinates and applying $\sigma(V)$
shows that
$(\sigma\circ \kappa\circ i)(V)$ is the identity on $\sheafD$, so
$j\circ i$ is the inclusion.
\begin{comment}
\begin{proof}
Here's the details:
$$i(V)(\eta)=(W\mapsto (\varphi\mapsto
\varphi(W)(\eta)))\in\Hom_{\holofuncs|_V}(\sheafD^*|_V,\holofuncs|_V).$$
Then
$\kappa(V)(i(V)(\eta))\in\Hom_{\holofuncs|_V}(\sheafM|_V,\holofuncs|_V)$
is (for $X\subseteq W$ open)
\begin{align*}
\kappa(V)(i(V)(\eta))(W)
&=(i(V)(\eta))(W)\circ \rho(W) \\
&=(\varphi\mapsto \varphi(W)(\eta))\circ
(\omega\mapsto (X\mapsto (n\mapsto \theta(X)(n,\omega)))) \\
&=\omega\mapsto \theta(W)(\eta,\omega).
\end{align*}
If $\eta=\sum_{i} a_i\frac{\partial}{\partial x_i}$, then
\begin{align*}
(\sigma\circ \kappa\circ i)(V)(\eta)
&=\sigma(V)( (\kappa\circ i)(V)(\eta)) \\
&=\sigma(V)( W\mapsto (\omega\mapsto \theta(\eta,\omega))) \\
&=
\sum_{i=1}^n \theta(V)(\eta,dx_i)\frac{\partial}{\partial x_i} \\
&= \eta.
\end{align*}
\end{proof}
\end{comment}

Since $j$ is an isomorphism and $i$ must be injective,
$\sheafD$ is reflexive if and only if
$i$ is surjective, if and only if $j\circ i$ is surjective, that is,
$\sheafD=(j\circ i)(\sheafD)=R(R(\sheafD))$.
\end{proof}

\begin{remark}
\label{rem:formsgetbigger}
An analogous result, proved similarly, holds for 
a module $\sheafM$ of forms with
$\holoforms\subseteq \sheafM\subseteq \frac{1}{f}\cdot\holoforms$.
\end{remark}

We shall need the following property of reflexive sheaves.

\begin{lemma}
\label{lemma:codim2}
For $i=1,2$, let
$\sheafD_i\subseteq \holovecs$ be a coherent
$\mathscr{O}$--module, with
some nonzero $f_i\in\holofuncs(U)$ such that $f_i\cdot \holovecs\subseteq
\sheafD_i\subseteq \holovecs$.
If $\sheafD_1$ and $\sheafD_2$ are equal off a set of codimension
$\geq 2$, then $R(\sheafD_1)=R(\sheafD_2)$ are reflexive. 
\end{lemma}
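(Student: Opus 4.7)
The plan is to exploit two classical properties of coherent reflexive analytic sheaves: the dual of any coherent sheaf is automatically reflexive, and a reflexive coherent sheaf on a complex manifold is determined by its restriction to the complement of any codimension-$\geq 2$ analytic subset. Since $R(\sheafD_i)$ realizes $\sheafD_i^*$ via Lemma \ref{lemma:duals}\eqref{en:D}, both realizations will be reflexive, and the hypothesis that $\sheafD_1=\sheafD_2$ off codimension $2$ will force global agreement of $R(\sheafD_1)$ and $R(\sheafD_2)$.

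First I would verify reflexivity of each $R(\sheafD_i)$. Since $R(\sheafD_i)\cong \sheafD_i^*$, it suffices to show that the dual of any coherent $\holofuncs$-module $\sheafF$ is reflexive. This is a standard diagram chase: dualizing the canonical evaluation map $\sheafF\to \sheafF^{**}$ yields a morphism $\sheafF^{***}\to \sheafF^*$, and one checks that this is a two-sided inverse of the canonical reflexivity map $\sheafF^*\to \sheafF^{***}$. Hence each $R(\sheafD_i)\cong \sheafD_i^*$ is reflexive.

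Next, let $Z\subseteq U$ denote a closed analytic subset of codimension $\geq 2$ outside of which $\sheafD_1=\sheafD_2$, and write $j:U\setminus Z\hookrightarrow U$ for the open inclusion. By Remark \ref{rem:stalkwise}, each stalk $R(\sheafD_i)_p$ depends only on $(\sheafD_i)_p$, so at every $p\in U\setminus Z$ the two stalks coincide. Thus $R(\sheafD_1)$ and $R(\sheafD_2)$ agree as subsheaves of $\meroforms$ on $U\setminus Z$, i.e., $j^* R(\sheafD_1)=j^*R(\sheafD_2)$.

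Finally, I would invoke the Hartogs-type extension property for reflexive coherent analytic sheaves: if $\sheafF$ is reflexive coherent and $Z$ has codimension $\geq 2$, then the natural map $\sheafF\to j_*j^*\sheafF$ is an isomorphism (which follows from the depth-$\geq 2$ property of reflexive modules over regular local rings, i.e., Serre's condition $S_2$). Applying this to each $R(\sheafD_i)$ and combining with the previous step gives
\[
R(\sheafD_1)=j_*j^*R(\sheafD_1)=j_*j^*R(\sheafD_2)=R(\sheafD_2).
\]
The main obstacle is this extension step: it is precisely what uses the codimension-$\geq 2$ hypothesis in an essential way (one cannot extend meromorphic forms across a divisor in general), so citing or sketching the analytic version of the reflexive extension theorem correctly is the part of the plan that requires the most care.
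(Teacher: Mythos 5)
Your overall route is the same as the paper's: show each $R(\sheafD_i)\cong\sheafD_i^*$ is reflexive, use Remark \ref{rem:stalkwise} to get agreement of the realizations off the codimension-$\geq 2$ set, and then conclude equality because coherent reflexive sheaves on the (smooth, hence normal) open set $U$ are determined by their restriction to the complement of a codimension-$\geq 2$ analytic subset. Your final extension step, phrased via $\sheafF\to j_*j^*\sheafF$ and Serre's condition $S_2$, is exactly the content of the reference the paper cites for that step, so that part is fine.

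The one place your argument as written does not work is the claimed purely formal proof that duals are reflexive. Dualizing the evaluation map $\sheafF\to\sheafF^{**}$ gives $(\epsilon_{\sheafF})^*:\sheafF^{***}\to\sheafF^*$, and the diagram chase shows only that $(\epsilon_{\sheafF})^*\circ\epsilon_{\sheafF^*}=\mathrm{id}_{\sheafF^*}$, i.e., the canonical map $\sheafF^*\to\sheafF^{***}$ is a split monomorphism; the other composite being the identity is precisely the surjectivity you need and is not formal --- over a general Noetherian ring duals of finitely generated modules need not be reflexive. Reflexivity of the dual of a coherent sheaf genuinely uses that the ambient space is smooth (or at least normal); this is why the paper simply cites the standard fact (Grauert--Remmert) rather than arguing it by a chase. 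So either cite that fact, or supply the nonformal argument (e.g., torsion-freeness of $\sheafD_i^*$ plus normality of $U$, checking the bidual map is an isomorphism in codimension one and extending); with that repair your proof coincides with the paper's.
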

\begin{proof}
By Remark \ref{rem:stalkwise}, 
$R(\sheafD_1)$ is equal to $R(\sheafD_2)$
off the same set of codimension $\geq 2$.
Each $R(\sheafD_i)$ is reflexive and coherent,
as is the case for the $\holofuncs$--dual of any coherent sheaf
(\cite[A.4.4]{g-r}).
% Or Cor 1.2 of hartshorne-stablereflexive.
Since $U$ is normal, 
coherent reflexive sheaves which are equal off a set of codimension $\geq 2$
are
equal (\cite[Prop. 1.6]{hartshorne-stablereflexive}).
% Or Serre, MR0212214
% Or p.141 (Prop 5.29) of MR1326619
% Or Ch. 2, Lemma 1.1.12 of MR2815674
\end{proof}

By Remark \ref{rem:stalkwise}, 
realization is a well-defined operation on
the stalk of a coherent $\holofuncs$--submodule
$\sheafN$
satisfying the hypotheses of Lemma \ref{lemma:duals},
and so there is a clear definition for such a stalk being
\emph{reflexive}.
%(This is true, for instance, of the module of logarithmic
%vector fields of a germ.)
By coherence, Corollary \ref{cor:dgetsbigger}, and
Remark \ref{rem:formsgetbigger},
$\sheafN_p$ is reflexive if and only if
$\sheafN_p=R(R(\sheafN_p))$,
and these conditions at stalks are equivalent to
those for $\sheafN$ restricted to a small enough
open set containing $p$.
We call $R(R(\sheafN_p))$ the
\emph{reflexive hull} of $\sheafN_p$.

There is the following characterization of
reflexive modules of logarithmic vector fields.

\begin{proposition}
\label{prop:reflexiveiff}
Let $(X,p)$ be an analytic germ in $\C^n$, and
let $(H,p)$ be the union of the hypersurface components
of $(X,p)$,
setting $H=\emptyset$ if $\dim(X)\neq n-1$.
Then
\begin{enumerate}
\item
\label{en:reflexiveiff}
$\Derlog{X}_p$ is reflexive if and only if
either $(X,p)$ is empty, is $(\C^n,p)$, or is the hypersurface germ $(H,p)$.
\item
\label{en:R}
$R(\Derlog{X}_p)=R(\Derlog{H}_p)$ is the module of logarithmic $1$-forms
for $(H,p)$.
\item
\label{en:RR}
$R(R(\Derlog{X}_p))=\Derlog{H}_p$.
\end{enumerate}
\end{proposition}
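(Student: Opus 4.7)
The plan is to prove \eqref{en:R} and \eqref{en:RR} by comparing $\Derlog{X}_p$ with $\Derlog{H}_p$ via Lemma \ref{lemma:codim2}, then deduce \eqref{en:reflexiveiff} from \eqref{en:RR} and Corollary \ref{cor:dgetsbigger}.

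I would first establish $\Derlog{X}_p \subseteq \Derlog{H}_p$: since $H$ is a union of irreducible components of $X$, any $\eta \in \Derlog{X}_p$ preserves the defining ideal of each such component, because at a smooth point $q$ of a component $X_s$ not lying on any other component, $I(X)=I(X_s)$ locally, forcing tangency of $\eta$ to $X_s$ there, which spreads to all of $X_s$ by density. Write $X=H \cup Y$ with $(Y,p)$ the union of non-hypersurface components, so $\dim Y \leq n-2$. Choose compatible representatives on a neighborhood $U$ of $p$ via Lemma \ref{lemma:goodrep}; on $U \setminus Y'$, the sets $X'$ and $H'$ coincide, so $\Derlog{X'}$ and $\Derlog{H'}$ agree off the codimension-$\geq 2$ subset $Y'$. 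For the hypothesis of Lemma \ref{lemma:codim2}, note $I(X)\cdot \Der \subseteq \Derlog{X}_p$ (and likewise for $H$), because multiplication of any vector field by an element of $I(X)$ sends $I(X)$ into itself; any nonzero $f \in I(X) \subseteq I(H)$ serves as the required element (taking $f=1$ in the trivial case $X=\C^n$). Lemma \ref{lemma:codim2} now gives $R(\Derlog{X}_p) = R(\Derlog{H}_p)$, which by Example \ref{ex:logarithmicforms} equals $\Omega^1_{\C^n,p}(\log H)$, proving \eqref{en:R}. Applying realization once more and invoking the other half of Example \ref{ex:logarithmicforms}, $R(R(\Derlog{X}_p)) = R(\Omega^1_{\C^n,p}(\log H)) = \Derlog{H}_p$, proving \eqref{en:RR}.

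For \eqref{en:reflexiveiff}, Corollary \ref{cor:dgetsbigger} together with \eqref{en:RR} shows $\Derlog{X}_p$ is reflexive if and only if $\Derlog{X}_p = \Derlog{H}_p$. Each of the three listed cases satisfies this tautologically (both modules equal $\Der_{\C^n,p}$ in the empty and full cases; equality is trivial in the hypersurface case). Conversely, suppose $\Derlog{X}_p = \Derlog{H}_p$ and $(X,p)$ has a non-hypersurface component $(Y_0,p)$. By coherence the equality lifts to a sheaf equality on some neighborhood $U$ of $p$. Since $Y_0$ is a component of $X$, it is contained in no other component and in particular not in $H$, so one can find $q \in Y_0 \cap U$ that is smooth on $Y_0$, lies on no other component of $X$, and avoids $H$. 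At such $q$, $\Derlog{H'}_q = \Der_{\C^n,q}$ satisfies $\bracket{\Derlog{H'}_q}_q = T_q\C^n$ of dimension $n$, whereas $\Derlog{X'}_q \subseteq \Derlog{Y'_0}_q$ yields $\dim\bracket{\Derlog{X'}_q}_q \leq \dim Y_0 < n$ by Lemma \ref{lemma:smoothpt}, contradicting the sheaf equality at $q$.

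The main obstacle is the sheaf-level bookkeeping: verifying the codimension-$\geq 2$ agreement and the annihilator hypothesis needed for Lemma \ref{lemma:codim2}, and in the converse direction of \eqref{en:reflexiveiff} propagating the stalk equality to a neighborhood and locating a witness point $q$ at which equality fails, all while cleanly accommodating the degenerate cases $X=\emptyset$, $X=\C^n$, and $H=\emptyset$.
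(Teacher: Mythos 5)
Your proof is correct and takes essentially the same route as the paper: decompose $X=H\cup Y$ with $Y$ of codimension $\geq 2$, observe $\Derlog{X}$ and $\Derlog{H}$ agree off $Y$, apply Lemma \ref{lemma:codim2} together with Saito's duality (Example \ref{ex:logarithmicforms}) to get \eqref{en:R} and \eqref{en:RR}, and deduce \eqref{en:reflexiveiff} from Corollary \ref{cor:dgetsbigger}. The only differences are cosmetic: you obtain reflexivity of $\Derlog{H}_p$ as a consequence of \eqref{en:RR} via the duality in Example \ref{ex:logarithmicforms} rather than citing Saito's (1.7) directly, and you flesh out the converse direction of \eqref{en:reflexiveiff} (coherence plus a witness smooth point on a codimension-$\geq 2$ component, using Lemma \ref{lemma:smoothpt}), which the paper states more tersely.
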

\begin{proof}
For one direction of \eqref{en:reflexiveiff},
$\Derlog{\emptyset}_p=\Derlog{\C^n}_p=\Der_{\C^n,p}$ is
free and thus
reflexive.
If $(X,p)$ is a hypersurface germ,
necessarily $(H,p)$, then 
$\Derlog{X}_p$ is reflexive by \cite[(1.7)]{Sa}.

For \eqref{en:R},
choose representatives of $(X,p)$ and $(H,p)$ on an open set $U$
chosen as in Lemma \ref{lemma:goodrep}.
Let $Y'$ be the union of the codimension $\geq 2$ components of $X'$,
so that $X'=H'\cup Y'$.
Since $\Derlog{X'}$ equals $\Derlog{H'}$ off of $Y'$, by Lemma
\ref{lemma:codim2},
$R(\Derlog{X'})=R(\Derlog{H'})$.
This gives the equality of \eqref{en:R}, and the
interpretation as logarithmic forms is due to Saito
(see Example \ref{ex:logarithmicforms}).

By \eqref{en:R} we have
$R(R(\Derlog{X}_p))=R(R(\Derlog{H}_p))$.
Since $\Derlog{H}_p$ is reflexive
by \eqref{en:reflexiveiff},
this proves \eqref{en:RR}.

To finish \eqref{en:reflexiveiff}, 
if $\Derlog{X}_p$ is reflexive then
by \eqref{en:RR},
$\Derlog{X}_p=\Derlog{H}_p$.
If $\Derlog{X}_p=\Der_{\C^n,p}$, then
either $(X,p)$ is empty or is $(\C^n,p)$;
if not, then $(X,p)=(H,p)$,
which must be a hypersurface.
\end{proof}

\begin{remark}
A coherent $\holofuncs$--module $\sheafF$ is reflexive if and only if
(at least locally) it can be written in an exact sequence
$$
\xymatrix{
0\ar[r] & \sheafF \ar[r] & \sheafE \ar[r] & \sheafG \ar[r] & 0,
}$$
where $\sheafE$ is locally free and $\sheafG$ is torsion-free
(\cite[Prop 1.1]{hartshorne-stablereflexive}).
For a reduced hypersurface $X$ defined near $p$ by $f$, we have
$$
\xymatrix{
0\ar[r] & \Derlog{X} \ar[r]^-{\alpha} & \holovecs\oplus
\holofuncs\ar[r]^-\beta & (f,\mathrm{jac}(f)) \ar[r] & 0,
}$$
where $\mathrm{jac}(f)$ is the Jacobian ideal generated by the partial
derivatives of $f$,
$\alpha(\eta)=(\eta,-\frac{\eta(f)}{f})$, and
$\beta(\eta,g)=\eta(f)+f\cdot g$. 
%
%
%
\begin{comment}
Note that if $I(X)=I$, then have exact sequence 
$$
\xymatrix{
0 \ar[r] & \Derlog{X} \ar[r] &
\Der \ar[r] &
\Der(\mathscr{O}_{\C^n},\mathscr{O}_{\C^n}/I) \ar[r] & 0.
}$$
So, it's always a first syzygy, thus torsion-free.
We knew that already, though, being a submodule of
the free module $\Der$.

If $I=(f_1,\ldots,f_\ell)$,
and 
$\alpha:\Der_{\C^n}\to (\mathscr{O}_{\C^n}/I)^\ell$
is $\alpha(\eta)=(\eta(f_1),\ldots,\eta(f_\ell))$, then
it seems like we should have
$$
\xymatrix{
0 \ar[r] & \Derlog{X} \ar[r] &
\Der_{\C^n} \ar[r]^{\alpha} &
(\mathscr{O}_{\C^n}/I)^\ell
\ar[r] & 0.}
$$
This correctly defines $\Derlog{X}$, but
it is surjective iff for $i=1,\ldots,\ell$ there exist
$\eta_i$ such that
$\eta_i(f_j)=\delta_{ij}$ modulo $I$.
Even if that's the case, it's still a first syzygy....

For $f$ reduced defining a hypersurface, let $J=(J_f,f)$ then have
exact sequence
$$
\xymatrix{
0\ar[r] & \Derlog{f} \ar[r] &
\Der\oplus \mathscr{O}_{\C^n} \ar[r] &
\mathscr{O}_{\C^n} \ar[r] &
\mathscr{O}_{\C^n}/J \ar[r] & 0
}$$
which is a second syzygy!

According to
Huneke-Leuschke, on a conjecture of auslander...
for the natural homomorphism $i:M\to M^{**}$,
$M$ is torsion-free if $i$ is injective, and
$M$ is reflexive if $i$ is an isomorphism.

Hartshorne, stable reflexive sheaves

Auslander-Bridger, Stable module Theory (book)

There are modules which are not reflexive but are isomorphic to their
double duals.
%http://math.stackexchange.com/questions/221280/is-the-double-dual-reflexive?rq=1
\end{comment}
%
%
\end{remark}

\section{Hypersurfaces and free divisors}
\label{sec:hypersurfaces}

We now apply our earlier results to hypersurface germs.
If $L$ is a module of vector fields logarithmic to a hypersurface
$(X,p)$, we give a criterion for $R(R(L))=\Derlog{X}_p$.
This generalizes criteria of Saito and Brion for free divisors.

\subsection{Hypersurfaces}
\label{subsec:hypersurfaces}
First, we summarize our earlier results for a hypersurface component
of an analytic germ.

\begin{proposition}
\label{prop:hypersurface}
Let $(X,p)$ be a germ of an analytic set in $\C^n$.
Let $\eta_1,\ldots,\eta_m\in\Derlog{X}_p$,
$L_{\C}=\C\{\eta_1,\ldots,\eta_m\}$,
and $L=\mathscr{O}_{\C^n,p}\cdot L_{\C}$.
Let $(X_0,p)$ be an irreducible hypersurface component of $(X,p)$.
Choose representatives of $(X,p)$, $(X_0,p)$, and each $\eta_i$, and
let $U$ be an open neighborhood of $p$ on which
these representatives satisfy the conditions of Lemma
\ref{lemma:goodrep}. 
Then the following conditions are equivalent:
\begin{enumerate}
\item
\label{cond:ideal}
$I_n(L)\nsubseteq (I(X_0))^2$ in $\mathscr{O}_{\C^n,p}$;
\item
\label{cond:pointsarbclose}
for every open neighborhood $V\subseteq U$ containing $p$,
there exists a $q\in X'_0$ satisfying one of the
following equivalent conditions:
  \begin{enumerate}
  \item \label{cond:dgnonzero} there exists $g\in I_n(L')$ such that
$dg_{(q)}\neq 0$;
  \item \label{cond:gnotinm2}  there exists $g\in I_n(L')$ such that
$g\notin \mathscr{M}_q^2$ in $\mathscr{O}_{\C^n,q}$;
  \item \label{cond:genderlog} $q$ is a smooth point of $X'$ and
$\mathscr{O}_{\C^n,q}\cdot L'=\Derlog{X'}_q$;
  \item \label{cond:geometric} $\dim(\bracket{L'}_q)=n-1$ and
there exists a nonzero $\xi\in L'_{\C}$ vanishing at $q$ such that, if
$f_0\in\mathscr{O}_{\C^n,q}$ is a reduced defining equation
for $(X'_0,q)$, then one of the following
equivalent conditions holds:
    \begin{enumerate}
    \item \label{cond:xif0}  $\xi(f_0)=\gamma\cdot f_0$, with
$\gamma(q)\neq 0$;
    \item \label{cond:dxif0} $\xi(f_0)$ has a nonzero derivative at
$q$;
    \item \label{cond:dxi}   $\im(d(\widehat{\xi})_{(q)})\nsubseteq
\bracket{L'}_q$;
    \end{enumerate}
  \end{enumerate}
\item \label{cond:everysmoothpt}
for every open neighborhood $V\subseteq U$ containing $p$, there exists
a closed analytic set $Y\subseteq V\cap X_0'$ of codimension $\geq 2$ in $V$ such that 
at every $q\in(\smooth(X)\cap X_0)\setminus Y$,
$L'$ and $(X',q)=(X'_0,q)$ satisfy the
equivalent conditions of \eqref{cond:pointsarbclose}. 
\end{enumerate}
\end{proposition}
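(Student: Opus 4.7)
The plan is to reduce the statement to Proposition \ref{prop:componentreduced} applied to the hypersurface component $(X_0, p)$, exploiting that since $d = \dim X_0 = n-1$, the codomains of the maps $\alpha$ and $\beta$ of Theorem \ref{thm:smoothgencriteria} are one-dimensional; hence the ``nonzero'' hypotheses of Proposition \ref{prop:smoothideals2} coincide with ``surjective.'' The reduction itself is immediate: the defining ideal $I(X_0)$ is a principal prime, hence generated by a regular sequence of length one, so by Lemma \ref{lemma:ci} we have $(I(X_0))^{(2)} = (I(X_0))^2$. Condition (1) of the present proposition is therefore precisely the negation of the algebraic condition of Proposition \ref{prop:componentreduced} (with $d+1 = n$), and that proposition gives the equivalence of (1) with both a pointwise statement and a Zariski open dense statement on $X_0'$.

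Next I would identify, at any $q \in X_0'$, what it means for the conditions of Proposition \ref{prop:smoothideals2} to hold at $q$ with respect to $L'$ and the germ $(X_0', q)$. By Theorem \ref{thm:onminors}, $I_n(L') \subseteq I(X_0')$, so every $g \in I_n(L')$ vanishes at $q$, and therefore $g \notin \mathscr{M}_q^2$ is equivalent to $dg_{(q)} \neq 0$, giving (a) $\iff$ (b). Moreover, if such a $g \in \mathscr{M}_q \setminus \mathscr{M}_q^2$ exists, then $V(g)$ is a smooth hypersurface at $q$ and contains $X'$ (since $g \in I_n(L') \subseteq I(X')$); a dimension comparison shows $V(g) = X'$ near $q$, so $q \in \smooth(X')$. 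Proposition \ref{prop:smoothideals2} then applies, and since $\End_\C(I/\mathscr{M}_q I)$ is one-dimensional, its condition is equivalent to surjectivity of $\alpha$, and hence by Theorem \ref{thm:smoothgencriteria} to $\mathscr{O}_{\C^n, q} \cdot L' = \Derlog{X'}_q$---condition (c); Proposition \ref{prop:smoothideals1} supplies the converse direction.

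For the subconditions of (d): once $q$ is a smooth point of $X'$ in $X_0'$ with reduced defining equation $f_0$, every $\xi \in \Derlog{X'}_q$ satisfies $\xi(f_0) = \gamma \cdot f_0$, and differentiating yields $d(\xi(f_0))_{(q)} = \gamma(q) \, df_{0,(q)}$. Since $df_{0,(q)} \neq 0$ has kernel $T_q X' = \bracket{L'}_q$, this gives (i) $\iff$ (ii); combined with Lemma \ref{lemma:whatiswidehat}, it gives (ii) $\iff$ (iii). Condition (d) then reads exactly as the nonvanishing of $\beta|_{L'_{\C, 0}}$---the Proposition \ref{prop:smoothideals2} condition translated geometrically.

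Putting these together, (1) $\iff$ (2) $\iff$ (3): the implication (3) $\Rightarrow$ (2) is immediate, and (2) $\Rightarrow$ (3) comes from the Zariski open dense version of Proposition \ref{prop:componentreduced}, as when (1) holds the failure locus is a proper closed analytic subset of the irreducible $X_0' \cap V$, and hence has codimension $\geq 2$ in $V$. I expect no deep obstacles; the main care needed is bookkeeping---tracking precisely where the hypersurface hypothesis ($d = n-1$) is invoked to convert ``nonzero'' conditions into ``surjective'' ones, and confirming that the various pointwise subconditions are all manifestations of the same underlying condition.
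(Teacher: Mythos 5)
Your proposal is correct and follows essentially the same route as the paper's proof: Lemma \ref{lemma:ci} to identify $(I(X_0))^{(2)}$ with $(I(X_0))^2$, Proposition \ref{prop:componentreduced} to tie condition (1) to the pointwise and Zariski-dense statements at smooth points of $X_0'$, the observation that the subconditions force $q\in\smooth(X')$, and then Proposition \ref{prop:smoothideals2}, Theorem \ref{thm:smoothgencriteria}, and Lemma \ref{lemma:whatiswidehat} (with the one-dimensional normal direction turning ``nonzero'' into ``surjective'') to identify the subconditions with one another. One slip of wording only: condition (1) here is not the \emph{negation} of Proposition \ref{prop:componentreduced}(1) but literally that condition with $d+1=n$, which is how you in fact use it.
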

\begin{proof}
First, observe that if any of the conditions of
\eqref{cond:pointsarbclose} hold for some $q\in X_0'$, then $q$ is a smooth
point of $X'$:
either $g$ locally defines a smooth hypersurface
(necessarily $(X',q)=(X'_0,q)$),
$q$ is assumed
smooth, or we use Lemma \ref{lemma:smoothpt}.

Since $(X_0,p)$ is a hypersurface germ, by Lemma \ref{lemma:ci}, $I(X_0)^2=(I(X_0))^{(2)}$.
Proposition \ref{prop:componentreduced} shows that
\eqref{cond:ideal}
is equivalent to
one of several equivalent conditions (listed in Proposition
\ref{prop:smoothideals2}) which should be satisfied at
some
point of $\smooth(X')\cap X_0'\cap V$ for every open neighborhood
$V\subseteq U$ containing $p$, 
or equivalently, at all points in a Zariski open subset of
$\smooth(X')\cap X_0'\cap V$
for every open neighborhood $V\subseteq U$ containing $p$.
% NODO: and hence off a set of codimension $\geq 2$ in $X_0'$.
One of these equivalent conditions, Proposition
\ref{prop:smoothideals2}\eqref{en:sm2a}, is the same as
\eqref{cond:gnotinm2}.

It remains only to check that the conditions of
\eqref{cond:pointsarbclose} are equivalent.
By a simple argument, \eqref{cond:dgnonzero} is equivalent to
\eqref{cond:gnotinm2}.
By Proposition \ref{prop:smoothideals2},
\eqref{cond:gnotinm2} is equivalent to
$\dim(\bracket{L}_q)=n-1$ and the existence of a nonzero $\xi\in L'_\C$
vanishing at $q$ with $\alpha(\xi)\neq 0$ (equivalently,
$\beta(\xi)\neq 0$).
Since $I(X'_0)=(f_0)$ in $\mathscr{O}_{\C^n,q}$
and $\xi\in\Derlog{X'_0}_q$, $\xi(f_0)=\gamma\cdot f_0$ for some
$\gamma\in \mathscr{O}_{\C^n,q}$.
By the definition of $\alpha$ and $\beta$,
\eqref{cond:xif0} is the condition that $\alpha(\xi)\neq 0$,
and \eqref{cond:dxi} is the condition the $\beta(\xi)\neq 0$.
To see that \eqref{cond:dxif0} is equivalent to \eqref{cond:xif0},
use the product rule on the equation $\xi(f_0)=\gamma\cdot f_0$
and the observation that $d(f_0)_{(q)}\neq 0$ as $(X'_0,q)$ is
smooth and $f_0$ reduced.

It remains to prove that \eqref{cond:genderlog} is equivalent to,
e.g., \eqref{cond:xif0}.
But $\alpha$ is a $\C$-linear map to a $1$-dimensional vector space, and
hence
$\alpha$ is nonzero if and only if $\alpha$ is surjective.
Thus, the equivalence follows by Theorem \ref{thm:smoothgencriteria}.
\end{proof}

\begin{remark}
Let $f\in\Ocnp$ define a reduced hypersurface $(X,p)$.
In conditions \eqref{cond:xif0} or \eqref{cond:dxif0}, 
$f_0$ could be any reduced defining equation for $(X_0,q)$,
including (a representative of) $f$.
Since $(\frac{1}{\gamma}\xi)(f)=f$ in $\mathscr{O}_{\C^n,q}$,
these conditions imply that
$\frac{1}{\gamma} \xi$ is an \emph{Euler-like vector field} for 
$f$ in some neighborhood of $q$.
This neighborhood may not include $p$, as
there are hypersurfaces without Euler-like vector fields.
\end{remark}

\begin{comment}
\begin{remark}
\label{rem:ngenerators}
If $(X,p)$ is a pure hypersurface and $m=n$, then the proof of
Proposition 
\ref{prop:componentreduced} shows that we may let $Y=\emptyset$
in condition \eqref{cond:everysmoothpt} of Proposition
\ref{prop:hypersurface}.
\end{remark}
\end{comment}

\begin{remark}
Let $(X,p)$ be a hypersurface.
If a free $\mathscr{O}_{\C^n,p}$--module $L\subseteq \Derlog{X}_p$ of
rank $n$ has $\dim(\bracket{L}_q)=n$ for $q\notin X$, then
$L$ is called a \emph{free* structure for $(X,p)$} in 
\cite{damon-legacy2}.
If $L\neq \Derlog{X}_p$, then
Proposition \ref{prop:hypersurface} gives 
a geometric interpretation of how $L$ must 
differ from $\Derlog{X}_p$.
\end{remark}

Using the notation and results of \S\ref{sec:duals},
we have the following criterion for a set of
logarithmic vector fields to generate all
such vector fields for a hypersurface germ. 

\begin{theorem}
\label{thm:generalizesaito}
Let $(X,p)$ be a hypersurface germ in $\C^n$ defined locally by a reduced
$f\in\Ocnp$.
Let $L$ be a submodule of
$\Derlog{X}_p$.
If $I_n(L)\subseteq (h)$ for a reduced $h$ implies that $h|f$
(equivalently,
the hypersurface component of the analytic germ
$(Z,p)$ defined by $I_n(L)$ is $(X,p)$),
and every irreducible hypersurface component $(X_0,p)$ of $(X,p)$
satisfies one of the equivalent
conditions of Proposition \ref{prop:hypersurface}, then
$R(L)$ is the module of germs of logarithmic $1$-forms of $(X,p)$
and
$$R(R(L))=\Derlog{X}_p.$$
If also $L$ is reflexive, then $L=\Derlog{X}_p$.
\end{theorem}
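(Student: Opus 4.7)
The plan is to show that $L$ and $\Derlog{X}_p$ agree off a set of codimension $\geq 2$, so that Lemma \ref{lemma:codim2} gives $R(L)=R(\Derlog{X}_p)$, which is the module of logarithmic $1$-forms by Example \ref{ex:logarithmicforms}; applying $R$ once more, together with the reflexivity of $\Derlog{X}_p$ from Proposition \ref{prop:reflexiveiff}, then yields $R(R(L))=\Derlog{X}_p$.

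I first observe that the first hypothesis forces $I_n(L)\neq 0$---otherwise every reduced $h\in\Ocnp$ would divide $f$, which is impossible---and similarly that $I_n(L)\subseteq\mathscr{M}_p$, since otherwise $L=\Der_{\C^n,p}$ and $X$ would be empty or all of $\C^n$. In particular $L$ has rank $n$, and Cramer's rule gives $I_n(L)\cdot \Der_{\C^n,p}\subseteq L$. Choose representatives of $(X,p)$, of each irreducible hypersurface component $(X_j,p)$, and of a generating set of $L$ on an open neighborhood $U$ of $p$ satisfying Lemma \ref{lemma:goodrep}, producing sheaves $L'\subseteq \Derlog{X'}$ on $U$.

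The heart of the argument is to exhibit a closed analytic subset $A\subseteq U$ of codimension $\geq 2$ with $L'_q=\Derlog{X'}_q$ for every $q\in U\setminus A$. For each hypersurface component $X_j'$, the second hypothesis together with Proposition \ref{prop:hypersurface}\eqref{cond:everysmoothpt} supplies a codimension-$\geq 2$ set $Y_j\subseteq X_j'$ such that $L'$ generates $\Derlog{X'}$ at every $q\in(\smooth(X')\cap X_j')\setminus Y_j$. For $q\notin X'$, $\Derlog{X'}_q=\Der_{U,q}$; if further $q\notin V(I_n(L'))$ then $I_n(L')_q$ contains a unit, and the Cramer relation gives $L'_q=\Der_{U,q}$. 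It therefore suffices to show that $V(I_n(L'))\setminus X'$ is contained in a codimension-$\geq 2$ analytic set. After shrinking $U$ so that every irreducible component of $V(I_n(L'))$ passes through $p$, each such hypersurface component corresponds to a height-one prime $(g)\subseteq \Ocnp$ containing $I_n(L)$, with $g$ irreducible; the first hypothesis then forces $g\mid f$, so the component lies in $X'$. Setting $A$ to be the union of $\Sing(X')$, the $Y_j$'s, and the codimension-$\geq 2$ irreducible components of $V(I_n(L'))$ produces the desired set.

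Applying Lemma \ref{lemma:codim2} now yields $R(L')=R(\Derlog{X'})$, the sheaf of logarithmic $1$-forms of $X'$ by Example \ref{ex:logarithmicforms}; taking stalks via Remark \ref{rem:stalkwise} gives the first conclusion. Applying $R$ once more and using $R(R(\Derlog{X}_p))=\Derlog{X}_p$ (reflexivity, Proposition \ref{prop:reflexiveiff}) produces $R(R(L))=\Derlog{X}_p$, and if $L$ is reflexive then Corollary \ref{cor:dgetsbigger} identifies $L$ with $R(R(L))$, giving $L=\Derlog{X}_p$. The main obstacle is the codimension-$\geq 2$ bound on $V(I_n(L'))\setminus X'$: one must carefully use Lemma \ref{lemma:goodrep} to pass between the germ-theoretic first hypothesis and the sheaf-theoretic behavior on $U$, ensuring that every irreducible hypersurface component of $V(I_n(L'))$ on $U$ does correspond to an irreducible hypersurface germ at $p$ so that the implication ``$h$ reduced with $I_n(L)\subseteq(h)$ implies $h\mid f$'' can be applied.
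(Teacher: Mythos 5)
Your proposal is correct and follows essentially the same route as the paper: show $L'$ and $\Derlog{X'}$ agree off a codimension $\geq 2$ set (smooth points of each hypersurface component via Proposition \ref{prop:hypersurface}, points off the zero set of $I_n(L')$ via Cramer's rule, with the first hypothesis forcing the extra components of $V(I_n(L'))$ to have codimension $\geq 2$), then apply Lemma \ref{lemma:codim2}, Proposition \ref{prop:reflexiveiff}, and Corollary \ref{cor:dgetsbigger}. Your explicit treatment of $I_n(L)\neq 0$, the Cramer containment $I_n(L)\cdot\Der_{\C^n,p}\subseteq L$, and the germ-versus-representative shrinking is just a more detailed account of steps the paper leaves implicit.
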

\begin{proof}
The equivalence of the two conditions is straightforward.
%
\begin{comment}
$\implies$:
The hypersurface components of $Z$ contains $X$.
if $h$ was a reduced defining equation for an irreducible hypersurface
component of $(Z,p)$, not part of $X$, then $I_n(L)\subseteq (h)$ but
$h\nmid f$.
%
$\Longleftarrow$:
Say $h$ was reduced and $I_n(L)\subseteq (h)$.  Then
the hypersurface $V(h)\subseteq Z$, and hence $V(h)\subseteq X$,
whence $f\in (h)$.  Thus $h|f$.
\end{comment}
%
Choose representatives of $X$ and $L$, and
choose an open set $U$ containing $p$ satisfying Lemma \ref{lemma:goodrep}.
Define $Z'$ using $I_n(L')$.
Let $\sheafL'$ be the $\mathscr{O}_U$--module generated by $L'$. 

Write $Z'=X'\cup Y'$, where $Y'$ consists of the irreducible components
of $Z'$ having codimension $\geq 2$.
At $q\notin Z'$, $\sheafL'_q$, $\holovecs_q$, and $\Derlog{X'}_q$ are equal.
Let $X'_i$, $i=1,\ldots,k$, be the irreducible hypersurface
components of $X'$.
By Proposition \ref{prop:hypersurface}
there is an analytic set $B_i\subseteq X_i$ of codimension $\geq
2$ in $U$ such that
$\sheafL'$ and $\Derlog{X'}$ are equal at every
$q\in (\smooth(X')\cap X'_i)\setminus B_i$.

Thus, $\sheafL'$ and $\Derlog{X'}$ are equal off 
$\sing(X')\cup Y'\cup \bigcup_{i=1}^k B_i$, which is of codimension
$\geq 2$.
By Lemma \ref{lemma:codim2} and
Proposition \ref{prop:reflexiveiff}\eqref{en:R},
$R(L)=R(\Derlog{X}_p)$ is the module of
logarithmic $1$-forms.
By Proposition \ref{prop:reflexiveiff}\eqref{en:RR},
$R(R(L))=R(R(\Derlog{X}_p))=\Derlog{X}_p$.
For the final statement, use
the last claim of Corollary \ref{cor:dgetsbigger}.
\end{proof}

\begin{remark}
Conversely, if $L=\Derlog{X}_p$ for a hypersurface $(X,p)$, then
$L$ is reflexive and the hypotheses of Theorem
\ref{thm:generalizesaito} are satisfied by Proposition
\ref{prop:reflexiveiff}\eqref{en:reflexiveiff} and
Remark \ref{rem:reduced}.
\end{remark}

\begin{example}
Let $f\in \mathscr{O}_{\C^n,p}$ define a reduced 
hypersurface germ $(X,p)$.
Let $L\subseteq \Derlog{X}_p$ be the module generated by the
vector fields of Example \ref{ex:dumbvfs}.
At $q\notin X$,
there exist $n$ linearly independent elements of $L$.
At every $q\in\smooth(X)$,
$L$ will satisfy, e.g., 
Proposition \ref{prop:hypersurface}\eqref{cond:xif0}.
Thus by Theorem \ref{thm:generalizesaito},
$R(L)$ is the module of logarithmic $1$-forms for $X$, and
$R(R(L))=\Derlog{X}$.

That such a generic construction works may be surprising,
but the algebraic conditions for
a
$\varphi\in \Hom_{\mathscr{O}_{\C^n,p}}(f\cdot \Der_{\C^n,p},\mathscr{O}_{\C^n,p})$
to extend (uniquely) to $L$, and for
a corresponding $\omega\in \frac{1}{f} \Omega^1_{\C^n,p}$ to be
logarithmic to $(X,p)$,
are the same.
\begin{comment}
Must have some $(g_1,\ldots,g_n)$ such that
$f$ divides $g_i \frac{\partial f}{\partial x_j}-g_j \frac{\partial f}{\partial x_i}$ for all $i,j$.
\end{comment}
\end{example} 

\subsection{Free divisors}
\label{sec:fd}

A hypersurface germ $(X,p)$ in $\C^n$
is called a \emph{free divisor} if $\Derlog{X}_p$ is a free
module, necessarily of rank $n$.
Theorem \ref{thm:generalizesaito} implies the following
result,
for which the equivalence of
\eqref{cond:isfd} and
\eqref{cond:saitoideal}
is due to Saito.

\begin{corollary}[Saito's criterion, {\cite[(1.8)ii]{Sa}}]
\label{cor:saitoscriterion}
Let $(X,p)$ be a hypersurface germ defined locally by
a reduced
$f\in\mathscr{O}_{\C^n,p}$.
The following are equivalent:
\begin{enumerate}
\item
\label{cond:isfd}
$(X,p)$ is a free divisor;
\item
\label{cond:saitoideal}
there exists
$\eta_1,\ldots,\eta_n\in\Derlog{X}_p$,
such that
for $L=\mathscr{O}_{\C^n,p}\{\eta_1,\ldots,\eta_n\}$,
$I_n(L)=(f)$ in $\mathscr{O}_{\C^n,p}$;
\item
\label{cond:geocriteria}
there exists
$\eta_1,\ldots,\eta_n\in\Derlog{X}_p$,
linearly independent off $X$,
such that
for every irreducible component of $(X_0,p)$ of $(X,p)$,
$L=\mathscr{O}_{\C^n,p}\{\eta_1,\ldots,\eta_n\}$
satisfies one of the equivalent conditions of
Proposition \ref{prop:hypersurface}.
\end{enumerate}
Moreover,
$L=\Derlog{X}_p$.
\end{corollary}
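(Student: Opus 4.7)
The plan is to reduce everything to Theorem \ref{thm:generalizesaito}. Under hypothesis \eqref{cond:geocriteria}, the linear independence of $\eta_1,\ldots,\eta_n$ off $X$ forces the Saito determinant $\det A$ to be a nonzero element of $\mathscr{O}_{\C^n,p}$, so $L$ is free of rank $n$ and hence reflexive. Since $V(\det A)\subseteq V(f)$, any reduced $h$ with $I_n(L)\subseteq (h)$ satisfies $V(h)\subseteq V(f)$ and, as both $f$ and $h$ are reduced, $h\mid f$; this supplies the algebraic hypothesis of Theorem \ref{thm:generalizesaito}, the geometric hypothesis being exactly \eqref{cond:geocriteria}. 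Theorem \ref{thm:generalizesaito} then yields $R(R(L))=\Derlog{X}_p$, and Corollary \ref{cor:dgetsbigger} upgrades the reflexivity of $L$ to $L=R(R(L))=\Derlog{X}_p$. This establishes the moreover claim and the implication \eqref{cond:geocriteria}$\Rightarrow$\eqref{cond:isfd}.

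For \eqref{cond:saitoideal}$\Rightarrow$\eqref{cond:geocriteria}: if $I_n(L)=(f)$ then $\det A$ is a unit multiple of $f$, so the $\eta_i$ are $\mathscr{O}_{\C^n,p}$-linearly independent and in particular linearly independent off $X=V(f)$. For each irreducible hypersurface component $(X_0,p)$ of $(X,p)$ with reduced defining equation $f_0\mid f$, the prime $I(X_0)=(f_0)$ is generated by a regular sequence, so Lemma \ref{lemma:ci} gives $I(X_0)^{(2)}=(f_0^2)$; since $f$ is reduced, $(f)\nsubseteq(f_0^2)$, and hence $I_n(L)\nsubseteq I(X_0)^{(2)}$, which is condition \eqref{cond:ideal} of Proposition \ref{prop:hypersurface}.

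For \eqref{cond:isfd}$\Rightarrow$\eqref{cond:saitoideal}, take a free basis $\eta_1,\ldots,\eta_n$ of $\Derlog{X}_p$ with Saito matrix $A$. Because $f\cdot\partial/\partial x_j\in\Derlog{X}_p$ for each $j$, expanding these $n$ vector fields in the basis yields a matrix $C$ with $fI_n=AC$, so $\det A$ divides $f^n$. Since $\Derlog{X}_p$ is free and hence reflexive, Proposition \ref{prop:reflexiveiff}\eqref{en:reflexiveiff} shows $(X,p)$ has only hypersurface components; applying Remark \ref{rem:reduced} to each gives $\det A\in I_n(\Derlog{X}_p)\subseteq\bigcap_i(f_i)=(f)$. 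The remaining task, and main obstacle, is to rule out $\det A=f\cdot g$ with $g$ a nontrivial product of factors of $f$: at a smooth point $q$ of exactly one component $X_i$, Proposition \ref{prop:smoothideals1} pins down $I_n(\Derlog{X}_p)\cdot\mathscr{O}_{\C^n,q}$ to the unit multiple $(f_i)$ of $(f)$, forcing $g$ to be a unit at $q$. Letting $i$ vary and combining with $g\mid f^{n-1}$ gives that $g$ is a unit in $\mathscr{O}_{\C^n,p}$, so $\det A=u\cdot f$ and $I_n(L)=(f)$, as required.
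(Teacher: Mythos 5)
Your proposal is correct and takes essentially the same route as the paper: the cycle \eqref{cond:geocriteria}$\Rightarrow$\eqref{cond:isfd} via Theorem \ref{thm:generalizesaito} together with reflexivity of the free rank-$n$ module $L$, \eqref{cond:saitoideal}$\Rightarrow$\eqref{cond:geocriteria} from reducedness of $f$ and Lemma \ref{lemma:ci}, and \eqref{cond:isfd}$\Rightarrow$\eqref{cond:saitoideal} from the bounds on $I_n(\Derlog{X}_p)$. The only (harmless) difference is in \eqref{cond:isfd}$\Rightarrow$\eqref{cond:saitoideal}, where the paper cites the sharpness clause of Theorem \ref{thm:onminors} directly, while you re-derive the needed facts by hand via the relation $f\,\mathrm{Id}=AC$ (giving $\det A\mid f^n$) and localization at generic smooth points of each component using coherence and Proposition \ref{prop:smoothideals1} --- which is exactly the computation underlying that sharpness statement.
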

\begin{proof}
If \eqref{cond:isfd}, then let $\eta_1,\ldots,\eta_n$ be a free basis of
$\Derlog{X}_p=L$.
Since the vector fields are linearly independent off $(X,p)$, linearly dependent on $(X,p)$,
and the principal ideal $I_n(L)=(g)$
must satisfy the sharpness conditions of Theorem \ref{thm:onminors},
%(see Remark \ref{rem:reduced}),
we have 
$I_n(L)=(f)$, which is \eqref{cond:saitoideal}.  

If \eqref{cond:saitoideal}, then $\eta_1,\ldots,\eta_n$ are linearly independent off $(X,p)$.
Since $f$ is reduced,
for every $(X_0,p)$, $L$ satisfies
condition Proposition \ref{prop:hypersurface}\eqref{cond:ideal}.
This proves
\eqref{cond:geocriteria}.

If \eqref{cond:geocriteria}, then the free (thus reflexive) module $L$ satisfies the hypotheses of
Theorem \ref{thm:generalizesaito}, so $L=\Derlog{X}_p$.
This proves \eqref{cond:isfd}.
\end{proof}

\begin{remark}
\label{rem:ngeneratorsforfd}
For a free divisor $(X,p)$, the equivalent conditions of
Proposition \ref{prop:hypersurface}\eqref{cond:pointsarbclose}
or \eqref{cond:everysmoothpt}
are satisfied at every smooth point of $X$ (that is, $Y=\emptyset$).
This follows from the coherence of $\Derlog{X}$ and condition 
\eqref{cond:genderlog}.
\end{remark}

\begin{comment}
NOTE: This is clear from coherence:

Thus, one may say that free divisors are exactly the analytic germs
$(X,p)$ in $\C^n$ so that there
exists
a set $S$ of $n$ elements of 
$\Derlog{X}_p$ such that $S$ generates
$\Derlog{X}_q$ for all $q\notin \sing(X)$ (that is, for all $q\notin X$ and
all $q\in\smooth(X)$).
\end{comment}

\begin{remark}
There is a second ``Saito's criterion''
(see \cite[(1.9)]{Sa}): if 
$L$ is the module generated by $\eta_1,\ldots,\eta_n\in
\Der_{\C^n,p}$,
$L$ is closed under the Lie bracket of vector fields,
$(X,p)$ is defined as a set by $I_n(L)=(g)$,
and $g\in\mathscr{O}_{\C^n,p}$ is reduced, 
then $(X,p)$ is a free divisor and $L=\Derlog{X}_p$.
To prove this, use a generalization of the
Frobenius Theorem (e.g., \cite{nagano})
to show that $L\subseteq \Derlog{X}_p$, and then
apply Corollary \ref{cor:saitoscriterion}.

By the same argument, there are versions of Theorem
\ref{thm:generalizesaito}
and Corollary \ref{cor:saitoscriterion}
where $L$ is a submodule of
$\Der_{\C^n,p}$
closed under the Lie
bracket of vector fields, and $(X,p)$ is the
hypersurface components of the set defined by $I_n(L)$,
\end{remark}

\subsection{Linear free divisors}
\label{subsec:lfds}
We now show that
Corollary \ref{cor:saitoscriterion} generalizes a theorem
of Brion concerning `linear' free divisors.
Here, we work in the algebraic category.

Let $V$ be a complex vector space of dimension $n$ and let $D\subseteq
V$ be a reduced hypersurface.  We say $D$ (or $(D,0)$) is a
\emph{linear free divisor} if
$\Derlog{D}_0$ has a free basis of 
$n$ vector fields which are \emph{linear}, homogeneous of degree $0$
(e.g., $(3x-2y)\partial_x-z\partial_y$).
By Saito's criterion, a linear free divisor in $V$ must be
defined by a homogeneous polynomial of degree $n$.
All linear free divisors arise from a rational representation
$\rho:G\to\GL(V)$ of a connected complex linear algebraic group $G$
with $n=\dim(G)$,
a Zariski open orbit $\Omega$, and with $D=V\setminus \Omega$
(see \cite[\S2]{gmns}).

For now, let $\rho:G\to\GL(V)$ be a rational representation of a connected
complex linear algebraic group $G$ with Lie algebra $\g$ and a Zariski open
orbit $\Omega$.
Differentiating $\rho$ gives a Lie algebra homomorphism
$d\rho_{(e)}:\g\to\End(V)$.
Since $V$ is a vector space, we can give a
canonical identification $\phi_v:V\to T_vV$ for each $v\in V$, and
then define a Lie algebra (anti-)homomorphism
$\tau:\g\to\Derlog{(V\setminus \Omega)}$ by
$\tau(X)(v)=\phi_v\left( d\rho_{(e)}(X)(v) \right)$
(see \cite{DP-matrixsingI}).
Thus 
$\tau(\g)$ is a finite-dimensional Lie algebra of 
linear vector fields, logarithmic to $V\setminus \Omega$.
For a linear free divisor $D$, there is a representation so that
$\tau(\g)$ generates the module $\Derlog{D}_0$.

Michel Brion 
used his work on log-homogeneous varieties (\cite{brionlog}) to
prove the following necessary and sufficient condition for
$D$ to be a linear free divisor.

\begin{corollary}[\cite{brion}, {\cite[Theorem 2.1]{freedivisorsinpvs}}]
\label{cor:brions}
Let $V$ be a complex vector space of dimension $n$,
and let $D\subseteq V$ be a reduced hypersurface.
Let $G\subseteq \GL(V)$ be the largest connected
subgroup which preserves $D$, with Lie algebra $\g$.
Let $\rho:G\to\GL(V)$ be the inclusion map.
Then the following are equivalent:
\begin{enumerate}
\item \label{cond:dlfd} $(D,0)$ is a linear free divisor and $\tau(\g)$
generates $\Derlog{D}_0$;
\item \label{cond:othertwo} Both: 
\begin{enumerate}
 \item \label{cond:vmdopen}     $V\setminus D$ is a unique $G$-orbit, and the corresponding
isotropy groups are finite; and
 \item \label{cond:isotropyrep} The smooth part in $D$ of each irreducible component of $D$ is
a unique $G$-orbit, and the corresponding isotropy groups are
extensions of finite groups by the multiplicative group
$\Gm=(\C\setminus\{0\},\cdot)$.
\end{enumerate}
\end{enumerate}
\end{corollary}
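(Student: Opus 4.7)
The plan is to deduce both implications from the generalized Saito criterion (Corollary~\ref{cor:saitoscriterion}) applied to $L=\mathscr{O}_{\C^n,0}\cdot\tau(\g)$. A preliminary observation in both directions is that $\dim\g=n$: under (2)(a), the finite isotropy on the open $G$-orbit $V\setminus D$ immediately forces $\dim G=n$; under (1), $\tau(\g)$ equals the space of linear vector fields in $\Derlog{D}_0$ (a linear vector field $v\mapsto Av$ lies in $\Derlog{D}_0$ iff $\exp(tA)\in G$ for all $t$, iff $A\in\g$), and this space is $n$-dimensional because a graded free module of rank $n$ with degree-$0$ basis has degree-$0$ part of $\C$-dimension $n$. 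Since $\tau$ is injective ($\rho$ is the inclusion), $\dim\g=n$ in both cases. Thus a basis $X_1,\ldots,X_n$ of $\g$ yields $n$ linear vector fields $\tau(X_1),\ldots,\tau(X_n)$ generating $L$.

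For (2)$\Rightarrow$(1) I verify the geometric criterion \eqref{cond:geocriteria} of Corollary~\ref{cor:saitoscriterion}. Linear independence of $\tau(X_1),\ldots,\tau(X_n)$ off $D$ is immediate from (a). For each irreducible component $(D_0,0)$, I check Proposition~\ref{prop:hypersurface}\eqref{cond:geometric} at a smooth point $q$ of $D$ that lies only on $D_0$; such points fill a Zariski-dense subset of $D_0$, so meet every neighborhood of $0$. By (b), $G\cdot q$ has dimension $n-1$, giving $\dim\bracket{L}_q=n-1$, and the assumption $G_q^0\cong\Gm$ provides a nonzero $X\in\g_q$, so $\xi=\tau(X)\in L_\C$ vanishes at $q$. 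Since the connected group $\Gm$ preserves the irreducible hypersurface $V(f_0)$, the polynomial $f_0$ is a $\Gm$-semi-invariant of some weight $w\in\C$, so $\xi(f_0)=w\cdot f_0$, and condition \eqref{cond:xif0} reduces to $w\neq 0$.

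This nonvanishing of $w$ is the main obstacle. The key input is $V^{\Gm}\subseteq D$: any fixed $v\in V^{\Gm}\setminus D$ would embed $\Gm$ into the (finite, by (a)) isotropy $G_v$, a contradiction. If $w=0$, complete reducibility of $\Gm$-representations lets us choose a $\Gm$-invariant normal vector to $D_0$ at $q$; combined with $q\in V^{\Gm}$, the entire complex line through $q$ in that normal direction lies in the linear subspace $V^{\Gm}\subseteq D$. But near $q$ the divisor $D$ agrees with the smooth germ $(D_0,q)$, which meets this normal line transversally at $q$---contradiction. Hence $w\neq 0$, condition \eqref{cond:xif0} holds, and Corollary~\ref{cor:saitoscriterion} delivers both $L=\Derlog{D}_0$ and the free-divisor property, proving (1).

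For (1)$\Rightarrow$(2), part (a) follows from the preliminary step: at any $v\notin D$, $\tau(\g)\cdot\mathscr{O}_v=\Der_v$ forces $\tau(\g)(v)=T_vV$, so every $v\in V\setminus D$ lies in an $n$-dimensional $G$-orbit contained in $V\setminus D$, and connectedness of $V\setminus D$ collapses these open orbits to a single one; finite isotropy follows by dimension count. For (b), fix an irreducible component $(D_0,0)$ and a smooth point $q\in\smooth(D)\cap D_0$. By Remark~\ref{rem:ngeneratorsforfd}, Proposition~\ref{prop:hypersurface}\eqref{cond:geometric} holds at $q$, giving $\dim\bracket{L}_q=n-1$ and a nonzero Euler-like $\xi=\tau(X)\in\tau(\g_q)$ with $\xi(f_0)=\gamma\cdot f_0$, $\gamma(q)\neq 0$. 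Thus $G\cdot q$ is the unique $(n-1)$-dimensional orbit in the irreducible $D_0$; being $G$-invariant, dense, and open in $D_0$, it cannot meet the proper $G$-invariant closed subset $\sing(D)\cap D_0$, so equals $\smooth(D)\cap D_0$. Finally $\dim G_q^0=1$, and $G_q^0\cong\Ga$ is ruled out because $\Ga$ has only the trivial character into $\Gm$, which would force $\xi(f_0)=0$, contradicting $\gamma(q)\neq 0$; hence $G_q^0\cong\Gm$.
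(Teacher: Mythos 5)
Your proposal is correct and follows the same overall strategy as the paper: set $L=\mathscr{O}_{\C^n,0}\cdot\tau(\g)$, note $\dim\g=n$ in both directions, and reduce everything to Corollary \ref{cor:saitoscriterion} by interpreting Proposition \ref{prop:hypersurface}\eqref{cond:xif0} at a generic smooth point of each component as nontriviality of the isotropy group's action in the normal direction; your weight-zero contradiction (invariant normal line fixed pointwise, transverse to $D$, against finite isotropy off $D$) is exactly the paper's reductivity argument in slightly different clothing. Where you genuinely diverge is in two sub-steps, both to your advantage in terms of self-containedness: (i) instead of the cited lemma from the author's companion paper, which extends the character of $G_v$ on the normal line to a character of $G$ realized as semi-invariance of $f$, you obtain the needed identity $\xi(f_0)=w\cdot f_0$ directly from the standard fact that the connected group $G$ (or $G^0_q\cong\Gm$) preserves each irreducible component, so the component's irreducible equation is automatically a semi-invariant; and (ii) in \eqref{cond:dlfd}$\Rightarrow$\eqref{cond:othertwo} you replace Mather's Lemma by the purely algebraic observation that full-dimensional orbits are open (resp.\ open and dense in the irreducible component), so connectedness/irreducibility forces uniqueness of the orbit. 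The one place you gloss is the passage from the germ hypothesis at $0$ to statements at arbitrary points of $V$: that $\tau(\g)$ generates $\Der_{\C^n,v}$ for every $v\notin D$ and that Remark \ref{rem:ngeneratorsforfd} applies at every smooth point of the global $D$ requires an argument (the determinant of the linear Saito matrix is homogeneous of degree $n$ and is, by Corollary \ref{cor:saitoscriterion}, a reduced local equation of $(D,0)$; homogeneity then forces $D$ to coincide globally with the cone it defines, after which Saito's criterion applies at every point, or alternatively one uses the scalar $\Gm\subseteq G$ to transport the local statement). The paper is equally terse on this point, so I regard it as a gloss to be filled in rather than a gap; with it supplied, your argument is complete.
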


Our proof differs from \cite{freedivisorsinpvs} by
using Corollary \ref{cor:saitoscriterion} instead of
\cite{brionlog}.

\begin{proof}
Let $L=\mathscr{O}_{\C^n,p}\cdot \tau(\g)$.

For $v\in V$, let $G_v$ denote the isotropy subgroup at $v$, and
let $G^0_v$ be the identity component of $G_v$.
The Lie algebra $\g_v$ of $G_v$ consists 
of those $Y\in\g$ such that $\tau(Y)$ vanishes at $v$,
and $T_v(G\cdot v)=\bracket{\tau(\g)}_v$. 
Thus \eqref{cond:vmdopen} implies that $n=\dim(G)$;
as this is also true for \eqref{cond:dlfd},
assume $n=\dim(G)$.

Suppose that $v\in V$ has $\dim(G\cdot v)=n-1$,
and hence $\overline{G\cdot v}$ is a hypersurface defined by
a reduced, irreducible $f\in \C[V]$.
Then $\rho$ induces a representation $\rho_v:G_v\to\GL(N)$ on the
normal space $N$ to $G\cdot v$ at $v$, and by assumption
$\dim(N)=\dim(G_v)=1$.
It follows that $\rho_v$ acts on $N$ by multiplication by a character
$G_v\to\Gm$.
By a Lemma in \cite{pike-numcomponents},
if $\rho$ has an open orbit then this character is
the restriction of some $\chi:G\to\Gm$ with
$f(\rho(g)(w))=\chi(g)\cdot f(w)$ for all $g\in G$ and $w\in V$.
Setting $g=\exp(t\cdot X)$ and differentiating, we see that for $X\in \g_v$,
\begin{equation}
\label{eqn:dchi}
\tau(X)(f)=d\chi_{(e)}(X)\cdot f,
\end{equation}
where $d\chi_{(e)}(X)\in\C$.
By \eqref{eqn:dchi}, the constant function
$d\chi_{(e)}(X)$ plays the role of $\gamma$ in 
condition 
Proposition \ref{prop:hypersurface}\eqref{cond:xif0}.
Thus, this condition is satisfied
at such a $v$ for $L$
if and only if $\rho_v|_{G^0_v}$ is
nontrivial (and hence is an isomorphism $G^0_v\to \Gm$).

If \eqref{cond:dlfd}, then by the above observations
and Corollary \ref{cor:saitoscriterion},
\eqref{cond:vmdopen} and \eqref{cond:isotropyrep} follow readily,
except that we only know the tangent spaces to the claimed orbits.
Mather's Lemma on Lie Group Actions,
an understanding of the connectedness of the smooth locus of a complex
analytic set,
and Lemma \ref{lemma:smoothpt} may be combined to show the orbits are as claimed.

If \eqref{cond:othertwo}, then $\tau(\g)$ is a $n$-dimensional vector
space of vector
fields with 
$\dim(\bracket{\tau(\g)}_v)=n$
for all $v\notin D$,
and
$\dim(\bracket{\tau(\g)}_v)=n-1$
for all $v\in\smooth(D)$.
All that remains before applying
Corollary \ref{cor:saitoscriterion} is to show that
for $v\in\smooth(D)$, $\rho_v|_{G^0_v}$ is nontrivial.
Since
$G^0_v$ is reductive by assumption,
$\rho|_{G^0_v}$ decomposes as a direct sum of representations. 
It follows
that the normal line may be realized as an actual
$1$-dimensional subspace $W$ of $V$, complementary to
$(\phi_v)^{-1}(T_v(G\cdot v))\subseteq V$.
If $\rho_v|_{G^0_v}$ is trivial, then
$\rho|_{G^0_v}$ fixes all points in $W$ and hence
fixes all points in $v+W$; as $v+W$ is a line transverse to $D$, it intersects
$V\setminus D$.
As a result, a trivial $\rho_v|_{G^0_v}$ contradicts
\eqref{cond:vmdopen}.
\end{proof}

\bibliographystyle{amsalpha}
\bibliography{refs}

\providecommand{\bysame}{\leavevmode\hbox to3em{\hrulefill}\thinspace}
\providecommand{\MR}{\relax\ifhmode\unskip\space\fi MR }
% \MRhref is called by the amsart/book/proc definition of \MR.
\providecommand{\MRhref}[2]{%
  \href{http://www.ams.org/mathscinet-getitem?mr=#1}{#2}
}
\providecommand{\href}[2]{#2}
\begin{thebibliography}{GMNRS09}

\bibitem[Bri06]{brion}
Michel Brion, \emph{Some remarks on linear free divisors}, E-mail to
  Ragnar-Olaf Buchweitz, September 2006.

\bibitem[Bri07]{brionlog}
\bysame, \emph{Log homogeneous varieties}, Proceedings of the {XVI}th {L}atin
  {A}merican {A}lgebra {C}olloquium ({S}panish), Bibl. Rev. Mat.
  Iberoamericana, Rev. Mat. Iberoamericana, Madrid, 2007, pp.~1--39.
  \MR{2500349 (2010m:14063)}

\bibitem[Dam03]{damon-legacy2}
James Damon, \emph{On the legacy of free divisors. {II}. {F}ree{${}^*$}
  divisors and complete intersections}, Mosc. Math. J. \textbf{3} (2003),
  no.~2, 361--395, 742, Dedicated to Vladimir I. Arnold on the occasion of his
  65th birthday. \MR{2025265 (2005d:32048)}

\bibitem[DP12]{DP-matrixsingI}
James Damon and Brian Pike, \emph{Solvable groups, free divisors and
  nonisolated matrix singularities {I}: Towers of free divisors}, Submitted.
  \href{http://arxiv.org/abs/1201.1577}{arXiv:1201.1577 [math.AG]}, 2012.

\bibitem[DP14]{DP-matrixsingII}
James Damon and Brian Pike, \emph{Solvable groups, free divisors and
  nonisolated matrix singularities {II}: vanishing topology}, Geom. Topol.
  \textbf{18} (2014), no.~2, 911--962. \MR{3190605}

\bibitem[EH79]{eisenbud-hochster}
David Eisenbud and Melvin Hochster, \emph{A {N}ullstellensatz with nilpotents
  and {Z}ariski's main lemma on holomorphic functions}, J. Algebra \textbf{58}
  (1979), no.~1, 157--161. \MR{535850 (80g:14002)}

\bibitem[Eis95]{eisenbud}
David Eisenbud, \emph{Commutative algebra}, Graduate Texts in Mathematics, vol.
  150, Springer-Verlag, New York, 1995, With a view toward algebraic geometry.
  \MR{1322960 (97a:13001)}

\bibitem[GMNRS09]{gmns}
Michel Granger, David Mond, Alicia Nieto-Reyes, and Mathias Schulze,
  \emph{Linear free divisors and the global logarithmic comparison theorem},
  Ann. Inst. Fourier (Grenoble) \textbf{59} (2009), no.~2, 811--850.
  \MR{2521436 (2010g:32047)}

\bibitem[GMS11]{freedivisorsinpvs}
Michel Granger, David Mond, and Mathias Schulze, \emph{Free divisors in
  prehomogeneous vector spaces}, Proc. Lond. Math. Soc. (3) \textbf{102}
  (2011), no.~5, 923--950. \MR{2795728}

\bibitem[GR84]{g-r}
Hans Grauert and Reinhold Remmert, \emph{Coherent analytic sheaves},
  Grundlehren der Mathematischen Wissenschaften [Fundamental Principles of
  Mathematical Sciences], vol. 265, Springer-Verlag, Berlin, 1984. \MR{755331
  (86a:32001)}

\bibitem[Har80]{hartshorne-stablereflexive}
Robin Hartshorne, \emph{Stable reflexive sheaves}, Math. Ann. \textbf{254}
  (1980), no.~2, 121--176. \MR{597077 (82b:14011)}

\bibitem[HM93]{hausermuller}
Herwig Hauser and Gerd M{\"u}ller, \emph{Affine varieties and {L}ie algebras of
  vector fields}, Manuscripta Math. \textbf{80} (1993), no.~3, 309--337.
  \MR{1240653 (94j:17025)}

\bibitem[Nag66]{nagano}
Tadashi Nagano, \emph{Linear differential systems with singularities and an
  application to transitive {L}ie algebras}, J. Math. Soc. Japan \textbf{18}
  (1966), 398--404. \MR{0199865 (33 \#8005)}

\bibitem[Pik]{pike-numcomponents}
Brian Pike, \emph{Additive relative invariants and the components of a linear
  free divisor}, \href{http://arxiv.org/abs/1401.2976}{arXiv:1401.2976
  [math.RT]}.

\bibitem[Sai80]{Sa}
Kyoji Saito, \emph{Theory of logarithmic differential forms and logarithmic
  vector fields}, J. Fac. Sci. Univ. Tokyo Sect. IA Math. \textbf{27} (1980),
  no.~2, 265--291. \MR{586450 (83h:32023)}

\bibitem[Vas98]{vasconcelos}
Wolmer~V. Vasconcelos, \emph{Computational methods in commutative algebra and
  algebraic geometry}, Algorithms and Computation in Mathematics, vol.~2,
  Springer-Verlag, Berlin, 1998, With chapters by David Eisenbud, Daniel R.
  Grayson, J{\"u}rgen Herzog and Michael Stillman. \MR{1484973 (99c:13048)}

\end{thebibliography}

\end{document}